\providecommand{\tabularnewline}{\\}
\theoremstyle{remark}
\newtheorem{rem}{\protect\remarkname}
\theoremstyle{plain}
\newtheorem{lem}{\protect\lemmaname}
\theoremstyle{plain}
\newtheorem{thm}{\protect\theoremname}
\providecommand{\lemmaname}{Lemma}
\providecommand{\remarkname}{Remark}
\providecommand{\theoremname}{Theorem}
\begin{document}
	
	\title{Fast Computation of Steady-State Response\\
		for Nonlinear Vibrations of High-Degree-of-Freedom Systems}
	\date{\vspace{-5ex}}
	\author{Shobhit Jain, Thomas Breunung, George Haller}
	\maketitle
	\begin{center}
		Institute for Mechanical Systems, ETH Zürich \\
		Leonhardstrasse 21, 8092 Zürich, Switzerland
		\par\end{center}
	\begin{abstract}
		We discuss an integral equation approach that enables fast computation
		of the response of nonlinear multi-degree-of-freedom mechanical systems
		under periodic and quasi-periodic external excitation. The kernel of
		this integral equation is a Green's function that we compute explicitly
		for general mechanical systems. We derive conditions under which the
		integral equation can be solved by a simple and fast Picard iteration even
		for non-smooth mechanical systems. The convergence of this iteration
		cannot be guaranteed for near-resonant forcing, for which we
		employ a Newton\textendash Raphson iteration instead, obtaining
		robust convergence. We further show that this integral-equation approach
		can be appended with standard continuation schemes to achieve an additional, significant
		performance increase over common approaches to computing steady-state response.
	\end{abstract}
	
	\section{Introduction}
	
	Multi-degree-of-freedom nonlinear mechanical systems generally approach
	a steady-state response under periodic or quasi-periodic forcing. Determining
	this response is often the most important objective in analyzing nonlinear
	vibrations in engineering practice.
	
	Despite the broad availability of effective numerical packages and
	powerful computers, identifying the steady-state response simply by
	numerically integrating the equations of motion is often a poor choice.
	First, modern engineering structures tend to be very lightly damped,
	resulting in exceedingly long integration times before the steady
	state is reached. Second, structural vibrations problems to be analyzed
	are often available as finite-element models for which repeated evaluations
	of the defining functions are costly. These evaluations are inherently
	not parallelizable, thus, increasing the number of processors used in
	the simulation results in increased cross-communication times that
	slow down already slowly-converging runs even further. As
	a result, even with today's advances in computing, it may take days
	to reach a good approximation to a steady-state response in complex
	structural vibration problems (cf., e.g. \cite{Avery}). 
	
	To achieve feasible computation times for steady-state response
	in high-dimensional systems, reduced-order models (ROM) are often
	used to obtain a low-dimensional variant of the mechanical system.
	Various nonlinear normal modes (NNM) concepts have been used to describe
	such small-amplitude, nonlinear oscillations. Among these, the classic
	NNM defintion of Rosenberg \cite{rosenberg62} targets periodic
	orbits in a two-dimensional subcenter-manifold \cite{kelley_67} in the undamped limit of the oscillatory system. By contrast, Shaw
	\& Pierre \cite{ShawPierre} define NNMs as the invariant manifolds
	tangent to modal subspaces at an equilibrium point (cf. Avramov \&
	Mikhlin \cite{avramov10} for a review) allowing application
	to dissipative systems. Haller \& Ponsioen \cite{Haller_SSM} distinguish
	these two notions for dissipative systems under possible periodic/quasi-periodic
	forcing, by defining an NNM as a near-equilibrium trajectory with
	finitely many frequencies, and introducing a spectral submanifold
	(SSM) as the smoothest invariant manifold tangent to a spectral subbundle
	along such an NNM. 
	
	Alternatively, ROMs obtained using heuristic projection-based techniques are
	also used to approximate steady-state response of high-dimensional
	systems. These include sub-structuring methods such  as the Craig\textendash Bampton method~\cite{Craig} (cf. Theodosiou~et~al.~\cite{Theodosiou_09}),
	proper orthogonal decomposition \cite{POD} (cf. Kerchen~et~al.~\cite{Kerchen_POD}), reduction using natural modes (cf.
	Amabili~\cite{Amabili_Nat_modes}, Touz\'{e}~et~al.~\cite{Touze}) and
	the modal-derivative method of Idelsohn ~\&~Cardona~\cite{Cardona} (cf.
	Sombroek~et~al.~\cite{Cees}, Jain~et~al.~\cite{Jain_2017}). A common
	feature of these methods is their \emph{local} nature:
	they seek to approximate nonlinear steady-state response in
	the vicinity of an equilibrium. Thus, high-amplitude oscillations
	are generally missed by these approaches.
	
	On the analytic side, perturbation techniques relying on a small parameter have been widely used to approximate the steady-state response
	of nonlinear systems. Nayfeh et al. \cite{nayfeh04,nayfeh74} give
	a formal multiple-scales expansion applied to a system with small
	damping, small nonlinearities and small forcing. Their results are
	detailed amplitude equations to be worked out on a case-by-case
	basis. Mitropolskii and Van Dao~\cite{mitropolskii97} apply the method
	of averaging (cf. Bogoliubov and Mitropolsky~\cite{bogoliubov61}
	or, more recently, Sanders and Verhulst \cite{sanders85}) after a
	transformation to amplitude-phase coordinates in the case of small
	damping, nonlinearities and forcing. They consider single as well
	as multi-harmonic forcing of multi degree of freedom systems and obtain
	the solution in terms of a multi-frequency Fourier expansion. Their
	formulas become involved even for a single oscillator, thus
	condensed formulas or algorithms are unavailable for general systems. As conceded by Mitroposkii and Van Dao~\cite{mitropolskii97}, the series expansion is formal, as no attention is given
	to the actual existence of a periodic response. Existence is indeed a subtle question
	in this context, since the envisioned periodic orbits would perturb from a non-hyperbolic fixed point. 
	
	Vakakis~\cite{vakakis95} relaxes the small nonlinearity assumption
	and describes a perturbation approach for obtaining the periodic response
	of a single-degree-of-freedom Duffing oscillator subject to small
	forcing and small damping. A formal series expansion is performed
	around a conservative limit, where periodic solutions are explicitly
	known (elliptic Duffing oscillator). This approach only works for
	perturbations of integrable nonlinear systems.
	
	Formally applicable without any small parameter assumption is the
	harmonic balance method. Introduced first by Kryloff and Bogoliuboff
	\cite{Kryloff} for single-harmonic approximation of the forced response,
	the method has been gradually extended to include higher harmonics and quasi-periodic
	orbits (cf. Chua and Ushida \cite{Chua_harm_bal} and Lau and Cheung~\cite{Lau_QuasiP}). In the harmonic balance procedure, the assumed
	steady-state solution is expanded in a Fourier series which, upon substitution, turns the original differential equations into a set of nonlinear algebraic equations for the unknown Fourier coefficients after truncation to finitely many harmonics. The error arising
	from this truncation, however, is not well understood. For the periodic case, Leipholz~\cite{Leipholz_harmbal} and Bobylev~et~al.~\cite{bobylev94}
	show that the solution of the harmonic balance converges to the actual
	solution of the system if the periodic orbit exists and the
	number of harmonics considered tends to infinity. Explicit error
	bounds are only available as functions of the (a priori unknown) periodic
	orbit (cf. Bobylev~et~al.~\cite{bobylev94}, Urabe~\cite{Urabe_HarmBal},
	Stokes \cite{Stokes Harmbal} and  Garc{\'i}a-Saldaña~and~Gasull~\cite{Saldana_harmbal}). The quantities involved, however,
	generally require numerical integration to obtain. For quasi-periodic forcing, such error bounds remain unknown to the best of our knowledge. 
	
	The shooting method (cf. Keller \cite{Keller_shooting}, Peeters et
	al. \cite{peters09} and Sracic and Allen \cite{Allen_shooting})
	is also broadly used to compute periodic orbits of nonlinear system.
	In this procedure, the periodicity of the sought orbit is
	used to formulate a two-point boundary value problem. The solutions
	are initial conditions on the periodic orbit. Starting from an initial
	guess, one corrects the initial conditions iteratively until the boundary
	value problem is solved up to a required precision. The iterated correction
	of the initial conditions, however, requires repeated numerical integration
	of the equation of variations along the current estimate of the periodic
	orbit, as well as numerical integration of the full system. Albeit
	the shooting method has moderate memory requirements relative to that of
	harmonic balance due to its smaller Jacobian, this advantage
	is useful only for very high-dimensional systems with memory constraints.
	In practice, shooting is limited by the capabilities of the time integrator
	used and can be unsuitable for solutions with
	large Floquet multipliers, as observed by Seydel \cite{Seydel}.
	Furthermore, the shooting method is only applicable to periodic steady-state solutions, not to quasi-periodic ones. 
	
	The shooting method uses a time-march-type integration, i.e., the solution at each time step is solved sequentially after the previous one. In contrast, collocation approaches solve for the solution at all time steps in the orbit simultaneously. Collocation schemes mitigate all the drawbacks of the shooting method but can be computationally expensive for large systems since all unknowns need to be solved together over the full orbit. Popular software packages, such as AUTO~\cite{Auto}, MATCONT~\cite{Matcont} and the \texttt{po}~toolbox of \textsc{coco}~\cite{CoCo}, also use collocation schemes to continue periodic solutions of dynamical systems. Renson et~al.~\cite{Renson} provide a thorough review of the commonly used methods for computation
	of periodic orbits in multi-degree-of-freedom mechanical systems. 
	
	Constructing particular solutions using integral equations is textbook
	material in physics or vibration courses for impulsive forcing the (system is at rest at the
	initial time, prior to which the forcing is zero). Solving this problem
	with a classic Duhamel integral will produce a particular solution
	that approaches the steady-state response asymptotically. This approach,
	therefore, suffers from the slow convergence we have already discussed
	for direct numerical integration. 
	
	In this paper, assuming either periodicity or quasi-periodicity for
	the external forcing, we derive an integral equation whose zeros
	are the steady-state responses of the mechanical system. Along with a phase condition to ensure uniqueness, the same integral equation can also
	be used to obtain the (quasi-) periodic response in conservative,
	autonomous mechanical systems. 
	
	While certain elements of the integral equations approach outlined here for periodic forcing have been already discussed outside the structural vibrations literature, our treatment of quasi-periodic forcing appears to be completely new. We do not set any conceptual bounds on the number of independent frequencies allowed in such a forcing, which enables one to apply the results to more complex excitations mimicking stochastic forcing.
	
	First, we derive a Picard iteration approach with explicit convergence
	criteria to solve the integral equations for the steady-state response
	iteratively (Section \ref{sec:Piccard}). This fast iteration
	approach is particularly appealing for high-dimensional systems, since
	it does not require the construction and inversion of Jacobian matrices, and for non-smooth systems, as is does not rely on derivatives. At the same time, this Picard iteration will not converge near external resonances. Applying a Newton\textendash Raphson
	scheme to the integral equation, however, we can achieve convergence of
	the iteration even for near-resonant forcing (Section~\ref{sec:Newton-Iteration}).
	We additionally employ numerical continuation schemes to obtain forced
	response and backbone curves of nonlinear mechanical systems (Section~\ref{subsec:Numerical-continuation}). Finally, we illustrate the
	performance gain from our newly proposed approach on several multi-degree-of-freedom
	mechanical examples (Section~\ref{sec:Numerical-Examples}), using a MATLAB\textsuperscript{\tiny\textregistered}-based implementation\footnote{available at \url{www.georgehaller.com}}.
	
	\section{Set-up}
	\label{sec:setup}
	We consider a general nonlinear mechanical system of the form
	\begin{equation}
	\mathbf{M}\ddot{\mathbf{x}}+\mathbf{C}\dot{\mathbf{x}}+\mathbf{Kx}+\mathbf{S}(\mathbf{x},\dot{\mathbf{x}})=\mathbf{f}(t),\label{eq:0}
	\end{equation}
	where $\mathbf{x}(t)\in\mathbb{R}^{n}$ is the vector of generalized
	displacements; $\mathbf{M},\mathbf{C},\mathbf{K}\in\mathbb{R}^{n\times n}$
	are the symmetric mass, stiffness and damping matrices; $\mathbf{S}$
	is a nonlinear, Lipschitz-continuous function such that $\mathbf{S}=\mathcal{O}\left(\left|\mathbf{x}\right|^{2},\left|\mathbf{x}\right|\left|\dot{\mathbf{x}}\right|,\left|\dot{\mathbf{x}}\right|^{2}\right)$;
	$\mathbf{f}$ is a time-dependent, multi-frequency external forcing.
	Specifically, we assume that $\mathbf{f}(t)$ is quasi-periodic with
	a rationally incommensurate frequency basis $\boldsymbol{\Omega}\in\mathbb{R}^{k},\,k\ge1$
	which means 
	\begin{equation}
	\mathbf{f}(t)=\tilde{\mathbf{f}}(\boldsymbol{\Omega}t),\qquad\left\langle \boldsymbol{\kappa},\boldsymbol{\Omega}\right\rangle \neq0,\quad\boldsymbol{\kappa}\in\mathbb{Z}^{k}-\{\mathbf{0}\},\label{eq:freq_basis}
	\end{equation}
	for some continuous function $\tilde{\mathbf{f}}\colon\mathbb{T}^{k}\to\mathbb{R}^{n}$,
	defined on a $k-$dimensional torus $\mathbb{T}^{k}$. For $k=1$,
	$\mathbf{f}$ is periodic in $t$ with period $T=2\pi/\boldsymbol{\Omega}$,
	while for $k>1$, $\mathbf{f}$ describes a strictly quasi-periodic
	forcing. System~\eqref{eq:0} can be equivalently expressed in the
	first-order form as
	\begin{equation}
	\mathbf{B}\dot{\mathbf{z}}=\mathbf{A}\mathbf{z}+\mathbf{R}(\mathbf{z})+\mathbf{F}(t)\,,\label{eq:firstorder}
	\end{equation}
	with
	\begin{equation*}
	\mathbf{z}=\left[\begin{array}{c}
	\dot{\mathbf{x}}\\
	\mathbf{x}
	\end{array}\right],\quad\mathbf{B}=\left[\begin{array}{cc}
	\mathbf{0} & \mathbf{M}\\
	\mathbf{M} & \mathbf{C}
	\end{array}\right],\quad\mathbf{A}=\left[\begin{array}{cc}
	\mathbf{M} & \mathbf{0}\\
	\mathbf{0} & -\mathbf{K}
	\end{array}\right],\quad\mathbf{R}(\mathbf{z})=\left[\begin{array}{c}
	\mathbf{0}\\
	\mathbf{S}(\mathbf{x},\dot{\mathbf{x}})
	\end{array}\right],\quad\mathbf{F}(t)=\left[\begin{array}{c}
	\mathbf{0}\\
	\mathbf{f}(t)
	\end{array}\right]\,.
	\end{equation*}
	
	The first-order form in \eqref{eq:firstorder} ensures that the coefficient
	matrices $\mathbf{A}$ and $\mathbf{B}$ are symmetric, if the matrices
	$\mathbf{M},\mathbf{C}$ and $\mathbf{K}$ are symmetric, as is
	usually the case in structural dynamics applications (cf. G\'{e}rardin~\&~Rixen~\cite{Rixen}). We assume that
	the coefficient matrix of the linear system
	\begin{equation}
	\mathbf{B}\dot{\mathbf{z}}=\mathbf{A}\mathbf{z}+\mathbf{F}(t)\label{eq:firstorderlinear}
	\end{equation}
	can be diagonalized using the eigenvectors of the generalized eigenvalue
	problem 
	\begin{equation}
	\left(\mathbf{A}-\lambda_{j}\mathbf{B}\right)\mathbf{v}_{j}=\mathbf{0},\quad j=1,\dots,2n,\label{eq:evp}
	\end{equation}
	via the linear transformation $\mathbf{z}=\mathbf{V}\mathbf{w}$,
	where $\mathbf{w}\in\mathbb{C}^{2n}$ represents the modal variables
	and~\linebreak${\ensuremath{\mathbf{V}=\left[\mathbf{v}_{1},\ldots,\mathbf{v}_{2n}\right]\in\mathbb{C}^{2n\times2n}}}$
	is the modal transformation matrix containing the eigenvectors. The
	diagonalized linear version of \eqref{eq:firstorderlinear} with forcing
	is given by 
	\begin{align}
	\dot{\mathbf{w}} & =\boldsymbol{\Lambda}\mathbf{w}+\boldsymbol{\psi}(t),\label{eq:modalfirstorder}
	\end{align}
	where $\mbox{\ensuremath{\boldsymbol{\Lambda}}=\ensuremath{\mathrm{diag}\left(\lambda_{1},\ldots,\lambda_{2n}\right)}}$,
	$\mbox{\ensuremath{\psi_{j}}(t)=\ensuremath{\frac{\tilde{\mathbf{v}}_{j}\mathbf{F}(t)}{\tilde{\mathbf{v}}_{j}\mathbf{B}\mathbf{v}_{j}}}}$,
	where $\tilde{\mathbf{v}}_{j}$ denotes the $j^{th}$ row of the matrix
	$\mathbf{V}^{-1}$. Furthermore, if the matrices $\mathbf{A}$ and
	$\mathbf{\mathbf{B}}$ are symmetric, then $\mbox{\ensuremath{\mathbf{V}^{-1}}=\ensuremath{\mathbf{V}^{\top}}}$.
	\begin{rem}
		\small
		We have assumed autonomous nonlinearities $\mathbf{S},\mathbf{R}$
		in the equations~\eqref{eq:0} and \eqref{eq:firstorder} since this
		is relevant for structural dynamics systems, but the following treatment
		also allows for time-dependence in $\mathbf{S}$ or $\mathbf{R}$.
		Specifically, all the following results hold for nonlinearities with
		explicit time dependence as long as the time-dependence is quasi-periodic
		(cf. eq.~\eqref{eq:freq_basis}) with the same frequency basis $\boldsymbol{\Omega}$
		as that of the external forcing $\mathbf{f}(t)$. 
	\end{rem}
	
	\subsection{Periodically forced system\label{subsec:Forced-linear-system}}
	
	We first review a classic result for periodic solutions in periodically
	forced linear systems (cf. {Burd~\cite{burd07}}).
	\begin{lem}
		\label{Lemma:1}If the forcing $\mathbf{F}(t)$ is $T$-periodic,i.e.,
		$\mathbf{F}(t+T)=\mathbf{F}(t),\:t\in\mathbb{R},$ and the non-resonance
		condition
		\begin{equation}
		\lambda_{j}\neq i\frac{2\pi}{T}\ell,\qquad \ell \in\mathbb{Z},\label{eq:nonresonance-1}
		\end{equation}
		is satisfied for all eigenvalues $\lambda_{1},\dots,\lambda_{2n}$
		defined in \eqref{eq:evp}, then there exists a unique $T$-periodic
		response to \eqref{eq:firstorderlinear}, given by 
		\begin{align}
		\mathbf{z}(t) & =\mathbf{V}\int_{0}^{T}\mathbf{G}(t-s,T)\mathbf{V}^{-1}\mathbf{F}(s)\,ds,
		\end{align}
		where $\mathbf{G}(t,T)$ is the diagonal matrix of periodic Green's
		functions for the modal displacement variables, defined as 
		\[
		\mathbf{G}(t,T)=\mathrm{diag}\left(G_{1}(t,T),\ldots,G_{2n}(t,T)\right)\in\mathbb{C}^{2n\times2n},
		\]
		\begin{equation}
		G_{j}(t,T)=e^{\lambda_{j}t}\left(\frac{e^{\lambda_{j}T}}{1-e^{\lambda_{j}T}}+h(t)\right),\quad j=1,\dots,2n\,,\label{eq:G_firstorder}
		\end{equation}
		with the Heaviside function $h(t)$ given by
		\[
		h(t):=\begin{cases}
		1 & t\ge0\\
		0 & t<0
		\end{cases}\,.
		\]
		\begin{proof}
			We reproduce the proof for completeness in Appendix~\ref{sec:ProofL1}.
		\end{proof}
	\end{lem}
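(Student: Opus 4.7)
The plan is to reduce the problem to $2n$ decoupled scalar ODEs via the modal transformation $\mathbf{z}=\mathbf{V}\mathbf{w}$ introduced before the lemma, solve each scalar problem in closed form, and then reassemble the result into matrix form.

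First, I would observe that under this transformation \eqref{eq:firstorderlinear} becomes the diagonal system \eqref{eq:modalfirstorder}, i.e.\ $\dot w_j = \lambda_j w_j + \psi_j(t)$ for $j=1,\ldots,2n$, in which each $\psi_j$ inherits $T$-periodicity from $\mathbf{F}$ because the modal map is linear and time-independent. The task thus splits into finding a $T$-periodic solution of each scalar ODE and then invoking linearity to paste the results back together.

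Next, I would solve each scalar ODE by variation of parameters,
\begin{equation*}
w_j(t)=e^{\lambda_j t}w_j(0)+\int_{0}^{t}e^{\lambda_j(t-s)}\psi_j(s)\,ds,
\end{equation*}
and impose the periodicity condition $w_j(T)=w_j(0)$ to obtain $(1-e^{\lambda_j T})w_j(0)=\int_{0}^{T}e^{\lambda_j(T-s)}\psi_j(s)\,ds$. The non-resonance condition \eqref{eq:nonresonance-1} guarantees $e^{\lambda_j T}\neq 1$, so $w_j(0)$ is uniquely determined. Substituting this initial condition back and merging the integrals over $[0,t]$ and $[0,T]$ with the aid of the Heaviside function $h(t-s)$ produces the compact representation $w_j(t)=\int_{0}^{T}G_j(t-s,T)\,\psi_j(s)\,ds$ with $G_j$ exactly as stated in \eqref{eq:G_firstorder}. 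I expect this algebraic repackaging to be the main technical step, since one must verify that the constant prefactor $e^{\lambda_j T}/(1-e^{\lambda_j T})$ together with the Heaviside jump simultaneously reconstitutes both the homogeneous contribution $e^{\lambda_j t}w_j(0)$ and the truncated particular integral over $[0,t]$.

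Finally, for uniqueness I would argue that any two $T$-periodic solutions of \eqref{eq:modalfirstorder} differ by a $T$-periodic solution of the homogeneous equation $\dot w_j=\lambda_j w_j$, which is necessarily of the form $c\,e^{\lambda_j t}$; such a function is $T$-periodic only if $e^{\lambda_j T}=1$, again excluded by \eqref{eq:nonresonance-1}, so $c=0$. Collecting the component formulas into the diagonal matrix $\mathbf{G}(t,T)$ and transforming back via $\mathbf{z}=\mathbf{V}\mathbf{w}$ then yields the claimed integral representation, completing the proof.
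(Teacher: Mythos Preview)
Your proposal is correct and follows essentially the same approach as the paper: the paper also passes to modal coordinates, applies the variation-of-constants formula, imposes $T$-periodicity to solve for the initial condition (using the non-resonance condition to invert $\mathbf{I}-e^{\boldsymbol{\Lambda}T}$), and then algebraically repackages the resulting two integrals into a single convolution with the stated Green's function. The only cosmetic difference is that the paper carries out the manipulation in matrix form with the diagonal $\boldsymbol{\Lambda}$ rather than component-by-component, and it absorbs uniqueness into the invertibility of $\mathbf{I}-e^{\boldsymbol{\Lambda}T}$ rather than giving your separate homogeneous-solution argument.
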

	\begin{rem}
		\small
		\label{rmk:Gamma_T}The uniform-in-time sup-norm of the Green's function~\eqref{eq:G_firstorder} can be bounded by the constant~$\Gamma(T)$ defined as
		\begin{equation}
		\varGamma(T):=\max_{1\leq j\leq n}\frac{T\max(\left|e^{\lambda_{j}T}\right|,1)}{\left|1-e^{\lambda_{j}T}\right|}\geq\max_{0\leq t<T}\left\Vert \int_{0}^{T}\left\Vert \mathbf{G}(t-s,T)\right\Vert _{0}\,ds\right\Vert _{0}.\label{eq:GammaT}
		\end{equation}
		We detail this estimate in Appendix \ref{app:Gamma_T_estimate}. 
	\end{rem}
	
	The Green's functions defined in \eqref{eq:G_firstorder} turn out
	to play a key role in describing periodic solutions of the full, nonlinear
	system as well. We recall this in the following result. 
	
	\begin{thm}
		\label{Theorem:IE_P}(i) If $\mathbf{z}(t)$ is a $T-$periodic solution
		of the nonlinear system \eqref{eq:firstorder}, then $\mathbf{z}(t)$
		must satisfy the integral equation
		\begin{equation}
		\mathbf{z}(t)=\mathbf{V}\int_{0}^{T}\mathbf{G}(t-s,T)\mathbf{V}^{-1}\left[\mathbf{F}(s)-\mathbf{R}(\mathbf{z}(s))\right]\,ds\,.\label{eq:int_periodic}
		\end{equation}
		(ii) Furthermore, any continuous, $T-$periodic solution $\mathbf{z}(t)$
		of \eqref{eq:int_periodic} is a $T-$periodic solution of the nonlinear
		system \eqref{eq:firstorder}.
		\begin{proof}
			See Appendix \ref{sec:ProofT1} for the proof, where the term~$ \mathbf{V}^{-1}\left[\mathbf{F}(t)-\mathbf{R}(\mathbf{z}(t))\right] $ is treated as a periodic forcing term in \eqref{eq:modalfirstorder} for a $ T $-periodic $ \mathbf{z}(t) $ and Lemma~\ref{Lemma:1} is used to prove (\textit{i}). Statement (\textit{ii}) is then a direct consequence of the Liebniz rule.
		\end{proof}
	\end{thm}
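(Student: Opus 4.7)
The plan is to view the nonlinear term $\mathbf{R}(\mathbf{z}(t))$ along a presumed $T$-periodic orbit as a known $T$-periodic inhomogeneity, reducing the analysis to the linear periodic case settled by Lemma~\ref{Lemma:1}. The converse direction is then obtained by differentiating the integral equation, with the only delicate point being the jump of the Heaviside function appearing in the Green's function \eqref{eq:G_firstorder} at $s=t$.

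For part~(i), I would start from a $T$-periodic solution $\mathbf{z}(t)$ of \eqref{eq:firstorder}. Since $\mathbf{R}$ is Lipschitz-continuous and $\mathbf{z}(t)$ is continuous and $T$-periodic, the composition $\mathbf{R}(\mathbf{z}(t))$ is itself continuous and $T$-periodic, so the effective inhomogeneity $\tilde{\mathbf{F}}(t):=\mathbf{F}(t)-\mathbf{R}(\mathbf{z}(t))$ is a continuous, $T$-periodic function. Rewriting \eqref{eq:firstorder} as $\mathbf{B}\dot{\mathbf{z}}=\mathbf{A}\mathbf{z}+\tilde{\mathbf{F}}(t)$ exhibits $\mathbf{z}(t)$ as a $T$-periodic solution of a linear inhomogeneous system of the form \eqref{eq:firstorderlinear}. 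The standing non-resonance assumption \eqref{eq:nonresonance-1} then allows me to invoke Lemma~\ref{Lemma:1}, which produces a unique $T$-periodic response; by that uniqueness the given $\mathbf{z}(t)$ must coincide with the Green's-function representation, which is exactly \eqref{eq:int_periodic}.

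For part~(ii), I would take an arbitrary continuous $T$-periodic $\mathbf{z}(t)$ satisfying \eqref{eq:int_periodic} and differentiate the right-hand side with respect to $t$. Passing to the modal coordinates $\mathbf{w}=\mathbf{V}^{-1}\mathbf{z}$, each scalar equation reads $w_j(t)=\int_0^T G_j(t-s,T)\tilde{\psi}_j(s)\,ds$, with $\tilde{\psi}_j$ the $j$-th modal component of $\tilde{\mathbf{F}}$ normalized as in \eqref{eq:modalfirstorder}. Splitting the integral at $s=t$ and applying the Leibniz rule, differentiation of the smooth exponential parts of $G_j$ reproduces a $\lambda_j w_j(t)$ contribution, while the boundary contributions originating from the Heaviside jump $\partial_t h(t-s)$ at $s=t$ combine to give exactly $\tilde{\psi}_j(t)$. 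Summing these contributions recovers the modal equation \eqref{eq:modalfirstorder} driven by $\tilde{\psi}_j(t)$, and undoing the modal transformation yields \eqref{eq:firstorder}. Continuity and $T$-periodicity of $\mathbf{z}(t)$ follow directly from the structure of $G_j(t-s,T)$ on $[0,T]$.

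The main obstacle is the Leibniz-rule calculation in part~(ii): one must organize the differentiation so that the singular, delta-like contribution from the discontinuity of $h(t-s)$ at $s=t$ produces precisely $\tilde{\mathbf{F}}(t)$, and verify that the boundary contributions from the endpoints $s=0$ and $s=T$ cancel thanks to the $T$-periodicity of $\tilde{\mathbf{F}}$ together with the explicit form of $G_j$ in \eqref{eq:G_firstorder}. Part~(i) is essentially bookkeeping once the reduction to Lemma~\ref{Lemma:1} is recognized.
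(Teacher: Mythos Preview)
Your proposal is correct and follows essentially the same approach as the paper: part~(i) treats $\mathbf{F}(t)-\mathbf{R}(\mathbf{z}(t))$ as a $T$-periodic forcing and invokes Lemma~\ref{Lemma:1} together with its uniqueness statement, while part~(ii) differentiates the integral representation via the Leibniz rule in modal coordinates. The paper organizes the Leibniz step slightly differently, first rewriting \eqref{eq:int_periodic} in the unfolded variation-of-constants form $\mathbf{z}(t)=\mathbf{V}e^{\boldsymbol{\Lambda}t}(\mathbf{I}-e^{\boldsymbol{\Lambda}T})^{-1}\int_0^T e^{\boldsymbol{\Lambda}(T-s)}\boldsymbol{\chi}(s)\,ds+\mathbf{V}\int_0^t e^{\boldsymbol{\Lambda}(t-s)}\boldsymbol{\chi}(s)\,ds$ (obtained in the proof of Lemma~\ref{Lemma:1}) before differentiating, which is exactly your ``split at $s=t$'' manoeuvre; one small inaccuracy in your description is that the fixed endpoints $s=0$ and $s=T$ produce no boundary terms under Leibniz, so no cancellation is needed there---the only boundary contribution arises at $s=t$ and yields $\tilde{\psi}_j(t)$, as you correctly note earlier.
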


	\subsection{Quasi-periodically forced systems}
	\label{subsec:qper_systems}
	The above classic results on periodic steady-state solutions extend
	to quasi-periodic steady-state solutions under quasi-periodic forcing.
	This observation does not appear to be available in the literature,
	which prompts us to provide full detail. 
	
	Let the forcing $\mathbf{F}(t)$ be quasi-periodic with frequency
	basis $\boldsymbol{\Omega}\in\mathbb{R}^{k}$, i.e., 
	\begin{equation}
	\mathbf{F}(t)=\sum_{\boldsymbol{\kappa}\in\mathbb{Z}^{k}}\mathbf{F}_{\boldsymbol{\kappa}}e^{i\left\langle \boldsymbol{\kappa},\boldsymbol{\Omega}\right\rangle t},\label{eq:QP_forcing}
	\end{equation}
	where each member of this $k$-parameter summation represents a time-periodic
	forcing with frequency $\left\langle \boldsymbol{\kappa},\boldsymbol{\Omega}\right\rangle $,
	i.e., forcing with period 
	\[
	T_{\boldsymbol{\kappa}}=\frac{2\pi}{\left\langle \boldsymbol{\kappa},\boldsymbol{\Omega}\right\rangle }.
	\]
	Here $T_{\mathbf{0}}=\infty$ formally corresponds to the period
	of the mean $\mathbf{F}_{\boldsymbol{0}}$ of $\mathbf{F}(t)$.
	\begin{lem}
		\label{Lemma:QP_linear} If the forcing is quasi-periodic, as given
		by \eqref{eq:QP_forcing}, then under the non-resonance condition
		\begin{equation}
		\lambda_{j}\neq i\frac{2\pi}{T_{\boldsymbol{\kappa}}}\ell,\qquad\ell\in\mathbb{Z},\quad j\in\{1,\dots,2n\},\quad\boldsymbol{\kappa}\in\mathbb{Z}^{k}\,,\label{eq:nonresonance-1-1}
		\end{equation}
		there exists a unique quasi-periodic steady-state response to \eqref{eq:firstorderlinear}
		with the same frequency basis $\boldsymbol{\Omega}$. This steady-state
		response is given by 
		\begin{align}
		\label{eq:IE_lin_qp}
		\mathbf{z}(t) & =\mathbf{V}\sum_{\boldsymbol{\kappa}\in\mathbb{Z}^{k}}\int_{0}^{T_{\boldsymbol{\kappa}}}\mathbf{G}(t-s,T_{\boldsymbol{\kappa}})\mathbf{V}^{-1}\mathbf{F}(s)\,ds.
		\end{align}
		Furthermore, $ \mathbf{z}(t)$ is quasi-periodic with Fourier expansion 
		\begin{equation}
		\mathbf{z}(t)=\mathbf{V}\sum_{\boldsymbol{\kappa}\in\mathbb{Z}^{k}}\mathbf{H}(T_{\boldsymbol{\kappa}})\mathbf{V}^{-1}\mathbf{F}_{\boldsymbol{\kappa}}e^{i\left\langle \boldsymbol{\kappa},\boldsymbol{\Omega}\right\rangle t}\,,
		\end{equation}
		where $\mathbf{H}(T_{\boldsymbol{\kappa}})$ is the diagonal matrix
		of the amplification factors, defined as 
		\[
		\mathbf{H}(T_{\boldsymbol{\kappa}})=\mathrm{diag}\left(H_{1}(T_{\boldsymbol{\kappa}}),\ldots,H_{2n}(T_{\boldsymbol{\kappa}})\right)\in\mathbb{C}^{2n\times2n},
		\]
		\begin{equation}
		H_{j}(t,T)=\frac{1}{i\left\langle \boldsymbol{\kappa},\boldsymbol{\Omega}\right\rangle -\lambda_{j}},\quad j=1,\dots,2n\,.\label{eq:ampl_fkt}
		\end{equation}
	\end{lem}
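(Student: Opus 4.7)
The plan is to use Lemma~\ref{Lemma:1} term by term on the Fourier decomposition~\eqref{eq:QP_forcing}. Each individual component $\mathbf{F}_{\boldsymbol{\kappa}}e^{i\langle\boldsymbol{\kappa},\boldsymbol{\Omega}\rangle t}$ is $T_{\boldsymbol{\kappa}}$-periodic, and the non-resonance condition~\eqref{eq:nonresonance-1-1} is precisely the hypothesis of Lemma~\ref{Lemma:1} applied with $T=T_{\boldsymbol{\kappa}}$. Hence for every $\boldsymbol{\kappa}\in\mathbb{Z}^{k}$ I obtain a unique $T_{\boldsymbol{\kappa}}$-periodic response to the isolated forcing component,
\[
\mathbf{z}_{\boldsymbol{\kappa}}(t)=\mathbf{V}\int_{0}^{T_{\boldsymbol{\kappa}}}\mathbf{G}(t-s,T_{\boldsymbol{\kappa}})\mathbf{V}^{-1}\mathbf{F}_{\boldsymbol{\kappa}}e^{i\langle\boldsymbol{\kappa},\boldsymbol{\Omega}\rangle s}\,ds,
\]
and by superposition the candidate quasi-periodic response is $\mathbf{z}(t)=\sum_{\boldsymbol{\kappa}}\mathbf{z}_{\boldsymbol{\kappa}}(t)$, giving the integral form~\eqref{eq:IE_lin_qp}.

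Next I would evaluate each modal integral in closed form to extract the Fourier expansion. On the $j$-th modal line the key observation is that $e^{i\langle\boldsymbol{\kappa},\boldsymbol{\Omega}\rangle T_{\boldsymbol{\kappa}}}=1$; splitting the integral over $[0,T_{\boldsymbol{\kappa}}]$ at the Heaviside jump in~\eqref{eq:G_firstorder} and invoking this identity causes the boundary contributions to cancel, leaving only $\tfrac{1}{i\langle\boldsymbol{\kappa},\boldsymbol{\Omega}\rangle-\lambda_{j}}e^{i\langle\boldsymbol{\kappa},\boldsymbol{\Omega}\rangle t}=H_{j}(T_{\boldsymbol{\kappa}})\,e^{i\langle\boldsymbol{\kappa},\boldsymbol{\Omega}\rangle t}$. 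Reassembling over $j$ and $\boldsymbol{\kappa}$ yields the claimed Fourier representation, and reading off the hull $\tilde{\mathbf{z}}(\boldsymbol{\phi})=\mathbf{V}\sum_{\boldsymbol{\kappa}}\mathbf{H}(T_{\boldsymbol{\kappa}})\mathbf{V}^{-1}\mathbf{F}_{\boldsymbol{\kappa}}e^{i\langle\boldsymbol{\kappa},\boldsymbol{\phi}\rangle}$ on $\mathbb{T}^{k}$ certifies quasi-periodicity with basis $\boldsymbol{\Omega}$ in the sense of~\eqref{eq:freq_basis}.

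For uniqueness, suppose $\mathbf{z}^{(1)}$ and $\mathbf{z}^{(2)}$ are two such responses. Their difference $\mathbf{y}$ is a quasi-periodic---in particular bounded---solution of the homogeneous system $\mathbf{B}\dot{\mathbf{y}}=\mathbf{A}\mathbf{y}$, whose modal components evolve as $c_{j}e^{\lambda_{j}t}$. Boundedness forces $\mathrm{Re}\,\lambda_{j}=0$ whenever $c_{j}\neq 0$, and matching the frequency module of $\boldsymbol{\Omega}$ then forces $\lambda_{j}=i\langle\boldsymbol{\kappa},\boldsymbol{\Omega}\rangle$ for some $\boldsymbol{\kappa}\in\mathbb{Z}^{k}$, which \eqref{eq:nonresonance-1-1} explicitly forbids. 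Hence $\mathbf{y}\equiv\mathbf{0}$.

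I expect the main hurdle to be the rigorous justification of the infinite sums, which is the classical small-divisor problem. Even under the non-resonance condition, as $\boldsymbol{\kappa}$ varies the inner products $\langle\boldsymbol{\kappa},\boldsymbol{\Omega}\rangle$ are dense in $\mathbb{R}$ and can approach $\mathrm{Im}\,\lambda_{j}$ arbitrarily closely, so $|H_{j}(T_{\boldsymbol{\kappa}})|$ is unbounded in $\boldsymbol{\kappa}$. Consequently, neither absolute convergence of the Fourier series nor the interchange of the $\boldsymbol{\kappa}$-sum with the $s$-integral in~\eqref{eq:IE_lin_qp} follows from mere continuity of $\tilde{\mathbf{f}}$. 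Making the argument precise will likely require a Diophantine-type lower bound on $|i\langle\boldsymbol{\kappa},\boldsymbol{\Omega}\rangle-\lambda_{j}|$ combined with sufficient decay of the coefficients $\mathbf{F}_{\boldsymbol{\kappa}}$ (i.e.\ smoothness of the hull $\tilde{\mathbf{f}}$ on $\mathbb{T}^{k}$); in the periodic case $k=1$ the issue disappears and the argument reduces directly to Lemma~\ref{Lemma:1}.
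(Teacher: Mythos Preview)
Your approach is the same as the paper's: apply Lemma~\ref{Lemma:1} term by term to the Fourier decomposition~\eqref{eq:QP_forcing}, superpose by linearity to obtain~\eqref{eq:IE_lin_qp}, and then evaluate each modal integral explicitly using $e^{i\langle\boldsymbol{\kappa},\boldsymbol{\Omega}\rangle T_{\boldsymbol{\kappa}}}=1$ to arrive at the amplification factors~\eqref{eq:ampl_fkt}. The paper's proof offers no more detail than your sketch on uniqueness or on justifying the term-by-term treatment.

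Your small-divisor worry, however, is misplaced in this setting. The mechanical system is dissipative, so $\mathrm{Re}\,\lambda_{j}\neq 0$ for every $j$, and then
\[
\bigl|i\langle\boldsymbol{\kappa},\boldsymbol{\Omega}\rangle-\lambda_{j}\bigr|\;\ge\;\bigl|\mathrm{Re}\,\lambda_{j}\bigr|\;>\;0
\]
uniformly in $\boldsymbol{\kappa}$; this is exactly the content of the bound $h_{\max}$ in the Remark immediately following the lemma (eq.~\eqref{eq:h_max}). Consequently $|H_{j}(T_{\boldsymbol{\kappa}})|$ is bounded independently of $\boldsymbol{\kappa}$, and convergence of the response series follows from mere summability of the $\mathbf{F}_{\boldsymbol{\kappa}}$, with no Diophantine hypothesis on $\boldsymbol{\Omega}$ required. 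The genuine small-divisor obstruction you describe arises only in the conservative limit $\mathrm{Re}\,\lambda_{j}=0$, which this lemma does not address.
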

	\begin{proof}
		The proof is a consequence of the linearity of \eqref{eq:firstorderlinear} along with Lemma~\ref{Lemma:1}, followed by the explicit evaluation of the integrals in \eqref{eq:IE_lin_qp}. We give the details in Appendix~\ref{sec:ProofLemma2}.
	\end{proof}
	\begin{rem}
		{\small
			The maximum of $H_{j}(T_{\boldsymbol{\kappa}})$ can be bounded by
			the constant $h_{max}$, defined as
			\begin{equation}
			\begin{split}\max_{\begin{array}{c}
				1\leq j\leq2n\\
				\boldsymbol{\kappa}\in\mathbb{Z}^{k}
				\end{array}}\left|H_{j}(T_{\boldsymbol{\kappa}})\right| & =\max_{\begin{array}{c}
				1\leq j\leq2n\\
				\boldsymbol{\kappa}\in\mathbb{Z}^{k}
				\end{array}}\left|\frac{1}{i\left\langle \boldsymbol{\kappa},\boldsymbol{\Omega}\right\rangle -\lambda_{j}}\right|=\max_{\begin{array}{c}
				1\leq j\leq2n\\
				\boldsymbol{\kappa}\in\mathbb{Z}^{k}
				\end{array}}\frac{1}{(\left\langle \boldsymbol{\kappa},\boldsymbol{\Omega}\right\rangle -\mathrm{Im}(\lambda_{j}))^{2}+\mathrm{Re}(\lambda_{j})^{2}}\\
			& \leq\max_{1\leq j\leq2n}\frac{1}{\mathrm{Re}(\lambda_{j})^{2}}=\frac{1}{\min_{1\leq j\leq2n}\mathrm{Re}(\lambda_{j})^{2}}\eqqcolon h_{max}.
			\end{split}
			\label{eq:h_max}
			\end{equation}
		}
	\end{rem}
	
	In analogy with Theorem \ref{Theorem:IE_P}, we present here an integral
	formulation for steady-state solutions of the nonlinear system \eqref{eq:firstorder} under quasi-periodic forcing.
	\begin{thm}
		\label{Theorem:IE_QP}(i) If $\mathbf{z}(t)$ is a quasi-periodic
		solution of the nonlinear system \eqref{eq:firstorder} with frequency
		basis $\boldsymbol{\Omega}$, then the nonlinear function $\mathbf{R}(\mathbf{z}(t))$
		is also quasi-periodic with the same frequency basis $\boldsymbol{\Omega}$
		and $\mathbf{z}(t)$ must satisfy the integral equation: 
		\begin{align}
		\mathbf{z}(t) =\mathbf{V}\sum_{\boldsymbol{\kappa}\in\mathbb{Z}^{k}}\int_{0}^{T_{\boldsymbol{\kappa}}}\mathbf{G}(t-s,T_{\boldsymbol{\kappa}})\mathbf{V}^{-1}\left[\mathbf{F}(s)-\mathbf{R}(\mathbf{z}(s))\right]\,ds\,. \label{eq:int_quasiper}
		\end{align}
		
		(ii) Furthermore, any continuous, quasi-periodic solution $\mathbf{z}(t)$
		of \eqref{eq:int_quasiper}, with frequency basis $\boldsymbol{\Omega}$,
		is a quasi-periodic solution of the nonlinear system \eqref{eq:firstorder}.
		\begin{proof}
			The proof is analogous to that for the periodic case (cf. Theorem \ref{Theorem:IE_P}).
			Again, the term ${\mathbf{F}(t)-\mathbf{R}(\mathbf{z}(t))}$ is treated
			as a quasi-periodic forcing term.
		\end{proof}
	\end{thm}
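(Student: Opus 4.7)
The plan is to mirror the proof strategy of Theorem~\ref{Theorem:IE_P}, promoting the term $\mathbf{F}(t)-\mathbf{R}(\mathbf{z}(t))$ to the role of an effective quasi-periodic external forcing and then appealing to Lemma~\ref{Lemma:QP_linear} for the linear, forced problem. Before doing so, I first need to verify that this effective forcing is genuinely quasi-periodic with the same frequency basis $\boldsymbol{\Omega}$. Since $\mathbf{z}(t)$ is assumed quasi-periodic in the sense of \eqref{eq:freq_basis}, we may write $\mathbf{z}(t)=\tilde{\mathbf{z}}(\boldsymbol{\Omega} t)$ for some continuous $\tilde{\mathbf{z}}\colon\mathbb{T}^{k}\to\mathbb{R}^{2n}$. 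Because $\mathbf{R}$ (and hence $\mathbf{S}$) is Lipschitz continuous, $\mathbf{R}(\mathbf{z}(t))=\mathbf{R}\bigl(\tilde{\mathbf{z}}(\boldsymbol{\Omega} t)\bigr)$ is a continuous function of $\boldsymbol{\Omega} t\in\mathbb{T}^{k}$, so it is quasi-periodic with the same frequency basis, and $\tilde{\mathbf{F}}(t):=\mathbf{F}(t)-\mathbf{R}(\mathbf{z}(t))$ inherits this property.

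For part (\textit{i}), I would then rewrite \eqref{eq:firstorder} as the linear, quasi-periodically forced system $\mathbf{B}\dot{\mathbf{z}}=\mathbf{A}\mathbf{z}+\tilde{\mathbf{F}}(t)$, treating $\tilde{\mathbf{F}}$ as a known external forcing. By hypothesis, $\mathbf{z}(t)$ is a quasi-periodic solution of this linear system with frequency basis $\boldsymbol{\Omega}$. Invoking the uniqueness part of Lemma~\ref{Lemma:QP_linear} (with its non-resonance hypothesis understood as in Theorem~\ref{Theorem:IE_P}), $\mathbf{z}(t)$ must coincide with the explicit integral representation \eqref{eq:IE_lin_qp} evaluated at $\tilde{\mathbf{F}}$, which is precisely \eqref{eq:int_quasiper}.

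For part (\textit{ii}), I would read the argument in reverse: given a continuous quasi-periodic solution $\mathbf{z}(t)$ of \eqref{eq:int_quasiper}, the same observation shows $\tilde{\mathbf{F}}(t)=\mathbf{F}(t)-\mathbf{R}(\mathbf{z}(t))$ is quasi-periodic with frequency basis $\boldsymbol{\Omega}$. By Lemma~\ref{Lemma:QP_linear}, the right-hand side of \eqref{eq:int_quasiper} is the unique quasi-periodic solution to $\mathbf{B}\dot{\mathbf{w}}=\mathbf{A}\mathbf{w}+\tilde{\mathbf{F}}(t)$; since it equals $\mathbf{z}(t)$, differentiating yields $\mathbf{B}\dot{\mathbf{z}}=\mathbf{A}\mathbf{z}+\mathbf{F}(t)-\mathbf{R}(\mathbf{z}(t))$, which is \eqref{eq:firstorder}. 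The main technical obstacle is precisely this differentiation step: the modal Green's functions $G_{j}(\cdot,T_{\boldsymbol{\kappa}})$ in \eqref{eq:G_firstorder} contain a Heaviside jump at $s=t$, so the Leibniz rule produces a delta-type contribution on each term of the sum (as in the periodic case), and I must justify termwise differentiation of the infinite sum over $\boldsymbol{\kappa}\in\mathbb{Z}^{k}$. The cleanest route is to pass to the Fourier side via the second representation in Lemma~\ref{Lemma:QP_linear}, where the convolution collapses to multiplication by the amplification factor $H_{j}(T_{\boldsymbol{\kappa}})$ and differentiation reduces to multiplication by $i\langle\boldsymbol{\kappa},\boldsymbol{\Omega}\rangle$; the defining identity $\bigl(i\langle\boldsymbol{\kappa},\boldsymbol{\Omega}\rangle-\lambda_{j}\bigr)H_{j}(T_{\boldsymbol{\kappa}})=1$ then immediately recovers the ODE mode by mode, thereby bypassing the delta-function bookkeeping that was needed in Theorem~\ref{Theorem:IE_P}.
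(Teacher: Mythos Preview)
Your proposal is correct and follows essentially the same approach as the paper: the paper's proof is the two-line sketch ``analogous to Theorem~\ref{Theorem:IE_P}; treat $\mathbf{F}(t)-\mathbf{R}(\mathbf{z}(t))$ as a quasi-periodic forcing term,'' and your argument is precisely the natural unpacking of that sketch via Lemma~\ref{Lemma:QP_linear}. Your additional care in verifying that $\mathbf{R}(\mathbf{z}(t))$ is quasi-periodic with basis $\boldsymbol{\Omega}$, and your suggestion to carry out the differentiation in part~(\textit{ii}) on the Fourier side (using $(i\langle\boldsymbol{\kappa},\boldsymbol{\Omega}\rangle-\lambda_j)H_j(T_{\boldsymbol{\kappa}})=1$) rather than term-by-term Leibniz on the Green's-function sum, go beyond what the paper spells out and are welcome refinements rather than departures.
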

	\vspace{3mm}
	\begin{rem}
		{\small 
			\label{rem:qp_fourier}
			With the Fourier expansion $ {\mathbf{z}(t) = \sum_{\boldsymbol{\kappa}\in\mathbb{Z}} \mathbf{z}_{\boldsymbol{\kappa}} e^{i\left\langle \boldsymbol{\kappa},\boldsymbol{\Omega}\right\rangle t} } $, eq.  \eqref{eq:int_quasiper} can be equivalently written as 
			\begin{equation}
			\mathbf{z}_{\boldsymbol{\kappa}} =\mathbf{V}\mathbf{H}(T_{\boldsymbol{\kappa}})\mathbf{V}^{-1}\left[\mathbf{F}_{\boldsymbol{\kappa}}-\mathbf{R}_{\boldsymbol{\kappa}}\{\mathbf{z}\}\right],\qquad {\boldsymbol{\kappa}\in\mathbb{Z}^{k}}\,,\label{eq:int_quasiper_explicit}
			\end{equation}
			where 	$ \mathbf{R}_{\boldsymbol{\kappa}}\{\mathbf{z}\} $  are the Fourier coefficients of the quasi-periodic function $ \mathbf{R}(\mathbf{z}(t)) $, defined as
			\begin{equation}
			\mathbf{R}_{\boldsymbol{\kappa}}\{\mathbf{z}\}:=\lim_{t\rightarrow\infty}\frac{1}{2t}\intop_{-t}^{t}\mathbf{R}(\mathbf{z}(t))e^{-i\left\langle \boldsymbol{\kappa},\boldsymbol{\Omega}\right\rangle t}dt.\label{eq:Rkappa_def}
			\end{equation}
			If we express the quasi-periodic solution using toroidal coordinates $ \boldsymbol{\theta} \in \mathbb{T}^{k}$ such that $ {\mathbf{z}(t) = \mathbf{u}(\mathbf{\Omega}t)} $, where $ {\mathbf{u}:\mathbb{T}^k\mapsto \mathbb{R}^{2n}} $ is the torus function, then we can express the Fourier coefficients as 
			\begin{equation}
			\mathbf{R}_{\boldsymbol{\kappa}}\{\mathbf{u}\}:=\frac{1}{(2\pi)^k}\int_{\mathbb{T}^k}\mathbf{R}(\mathbf{u}(\boldsymbol{\theta}))e^{-i\left\langle \boldsymbol{\kappa},\boldsymbol{\theta}\right\rangle t}d\boldsymbol{\theta}.\label{eq:Rkappa_tans}
			\end{equation}
			This helps to avoid the infinite limit in the integral \eqref{eq:Rkappa_def} that can pose numerical difficulties (cf. Schilder~et~al.~\cite{Schilder}, Mondelo González \cite{Mondelo}}.
	\end{rem}

	\subsection{Special case: Structural damping and purely geometric nonlinearities\label{subsec:Proportional_damping}}
	The results in Sections~\ref{subsec:Forced-linear-system}-\ref{subsec:qper_systems} apply to general first-order systems of the form \eqref{eq:firstorder}. The special case of second-order mechanical systems with proportional damping and purely geometric nonlinearities, however, is of significant interest to structural dynamicists (cf. G\'{e}rardin~\&~Rixen~\cite{Rixen}). These general results can be simplified for such systems, resulting in integral equations with half the dimensionality of equations \eqref{eq:int_periodic} and \eqref{eq:int_quasiper}, as we discuss in this section.
	
	We assume that the damping matrix $\mathbf{C}$ satisfies the proportional
	damping hypothesis, i.e., can be expressed as a linear combination
	of $\mathbf{M}$ and $\mathbf{K}$. We also assume that the nonlinearities
	depend on the positions only, i.e., we can simply write $\mathbf{S}(\mathbf{x})$.
	The equations of motion are, therefore, given by 
	
	\begin{equation}
	\mathbf{M}\ddot{\mathbf{x}}+\mathbf{C}\dot{\mathbf{x}}+\mathbf{Kx}+\mathbf{S}(\mathbf{x})=\mathbf{f}(t).\label{eq:eqm_posdep}
	\end{equation}
	Then, the real eigenvectors $\mathbf{u}_{j}$ of the undamped eigenvalue
	problem satisfy 
	\begin{equation}
	\left(\mathbf{K}-\omega_{0,j}^{2}\mathbf{M}\right)\mathbf{u}_{j}=\mathbf{0}\quad\left(j=1,2,\dots,n\right),
	\end{equation}
	where $\omega_{0,j}$ is the eigenfrequency of the undamped vibration
	mode $\mathbf{u}_{j}\in\mathbb{R}^{n}$. These eigenvectors (or \emph{modes})
	can be used to diagonalize the linear part of (\ref{eq:eqm_posdep}) using
	the linear transformation ${\mathbf{x}=\mathbf{U}\mathbf{y}}$, where
	${\mathbf{y}\in\mathbb{R}^{n}}$ represents the modal variables and
	${\mathbf{U}=\left[\mathbf{u}_{1},\ldots,\mathbf{u}_{n}\right]\in\mathbb{R}^{n\times n}}$
	is the modal transformation matrix containing the vibration modes.
	Thus, the decoupled system of equations for the linear system, 
	\begin{equation}
	\mathbf{M}\ddot{\mathbf{x}}+\mathbf{C}\dot{\mathbf{x}}+\mathbf{Kx}=\mathbf{f}(t),\label{eq:xeq}
	\end{equation}
	is given by
	\begin{equation}
	\mathbf{U}^{\top}\mathbf{MU}\ddot{\mathbf{y}}+\mathbf{U}^{\top}\mathbf{CU}\dot{\mathbf{y}}+\mathbf{U}^{\top}\mathbf{KUy}=\mathbf{U}^{\top}\mathbf{f}(t).\label{eq:forced modal-1}
	\end{equation}
	Specifically, the $j^{\text{th}}$mode $(y_{j})$ of equation \eqref{eq:forced modal-1}
	is customarily expressed in the vibrations literature as 
	\begin{equation}
	\ddot{y}_{j}+2\zeta_{j}\omega_{0,j}\dot{y}_{j}+\omega_{0,j}^{2}y_{j}=\varphi_{j}(t),\quad j=1,...,n,\label{eq:forced_modal_component-1}
	\end{equation}
	where $\omega_{0,j}=\sqrt{\frac{\mathbf{u}_{j}^{\top}\mathbf{K}\mathbf{u}_{j}}{\mathbf{u}_{j}^{\top}\mathbf{M}\mathbf{u}_{j}}}$
	are the undamped natural frequencies; $\zeta_{j}=\frac{1}{2\omega_{0,j}}\left(\frac{\mathbf{u}_{j}^{\top}\mathbf{C}\mathbf{u}_{j}}{\mathbf{u}_{j}^{\top}\mathbf{M}\mathbf{u}_{j}}\right)$
	are the modal damping coefficients; and $\varphi_{j}(t)=\left(\frac{\mathbf{u}_{j}^{\top}\mathbf{F}(t)}{\mathbf{u}_{j}^{\top}\mathbf{M}\mathbf{u}_{j}}\right)$
	are the modal participation factors. The eigenvalues for the corresponding
	full system in phase space can be arranged as follows 
	\begin{align*}
	\lambda_{2j-1,2j} & =\left(-\zeta_{j}\pm\sqrt{\zeta_{j}^{2}-1}\right)\omega_{0,j},\quad j=1,\dots,n
	\end{align*}
	With the constants
	\begin{align}
	\alpha_{j} & :=\text{Re}(\lambda_{2j}),\quad\omega_{j}:=|\text{Im}(\lambda_{2j})|,\quad\beta_{j}:=\alpha_{j}+\omega_{j},\quad\gamma_{j}:=\alpha_{j}-\omega_{j,}\quad j=1,\dots,n,\label{eq:alpha_beta}
	\end{align}
	we can restate Lemma \ref{Lemma:1} specifically for linear systems
	with proportional damping as follows. 
	\begin{lem}
		\label{Lemma:special_case_QP_linear}For $T$-periodic forcing $\mathbf{f}(t)$
		$\mathbf{f}(t+T)=\mathbf{f}(t),\:t\in\mathbb{R},\:T>0$
		and under the non-resonance conditions \eqref{eq:nonresonance-1}, there exists a unique T-periodic response for system \eqref{eq:xeq},
		given by 
		\begin{align}
		\mathbf{x}(t) & =\mathbf{U}\int_{0}^{T}\mathbf{L}(t-s,T)\mathbf{U}^{\top}\mathbf{f}(s)\,ds,
		\end{align}
		where $\mathbf{L}(t,T)$ is the diagonal Green's function matrix for
		the modal displacement variables defined as 
		\[
		\mathbf{L}(t,T)=\mathrm{diag}\left(L_{1}(t,T),\ldots,L_{n}(t,T)\right)\in\mathbb{R}^{n\times n},
		\]
		\begin{equation}
		L_{j}(t,T)=\begin{cases}
		\frac{e^{\alpha_{j}t}}{\omega_{j}}\left[\frac{e^{\alpha_{j}T}\left[\sin\omega_{j}(T+t)-e^{\alpha_{j}T}\sin\omega_{j}t\right]}{1+e^{2\alpha_{j}T}-2e^{\alpha_{j}T}\cos\omega_{j}T}+h(t)\sin\omega_{j}t\right], & \zeta_{j}<1\\
		\frac{e^{\alpha_{j}(T+t)}\left[\left(1-e^{\alpha_{j}T}\right)t+T\right]}{\left(1-e^{\alpha_{j}T}\right)^{2}}+h(t)te^{\alpha_{j}t}\,, & \zeta_{j}=1\\
		\frac{1}{(\beta_{j}-\gamma_{j})}\left[\frac{e^{\beta_{j}(T+t)}}{1-e^{\beta_{j}T}}-\frac{e^{\gamma_{j}(T+t)}}{1-e^{\gamma_{j}T}}+h(t)\left(e^{\beta_{j}t}-e^{\gamma_{j}t}\right)\right], & \zeta_{j}>1
		\end{cases},\quad j=1,\dots,n\label{eq:G_pos}
		\end{equation}
		and $\boldsymbol{\varphi}(s)=[\varphi_{1}(s),\dots,\varphi_{n}(s)]^{\top}$
		is the forcing vector in modal coordinates. 
		\begin{proof}
			See Appendix~\ref{sec:ProofLemma3}. 
		\end{proof}
	\end{lem}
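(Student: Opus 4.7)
The plan is to reduce the problem to $n$ decoupled scalar second-order ODEs via the modal transformation $\mathbf{x}=\mathbf{U}\mathbf{y}$ already introduced in the excerpt, yielding equations of the form $\ddot{y}_j+2\zeta_j\omega_{0,j}\dot{y}_j+\omega_{0,j}^2 y_j=\varphi_j(t)$, and then, for each $j$, to exhibit a scalar periodic Green's function $L_j(t,T)$ such that $y_j(t)=\int_0^T L_j(t-s,T)\varphi_j(s)\,ds$ is the unique $T$-periodic solution. Stacking these componentwise and transforming back via $\mathbf{U}$ produces the claimed formula, with $\mathbf{U}^\top\mathbf{f}(s)$ playing the role of the modal forcing once the standard normalization $\mathbf{u}_j^\top\mathbf{M}\mathbf{u}_j=1$ is assumed (or absorbed into $\mathbf{U}$).

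For each fixed $j$, I would obtain $L_j$ by writing the second-order modal equation in first-order form and applying Lemma~\ref{Lemma:1}. In the underdamped case $\zeta_j<1$, the two eigenvalues are complex conjugates $\alpha_j\pm i\omega_j$, so the $2\times2$ diagonal Green's matrix $\mathbf{G}$ from \eqref{eq:G_firstorder} yields a pair of complex-exponential contributions. Projecting onto the displacement component and combining the conjugate pair via the identities $e^{\pm i\omega_j t}=\cos\omega_j t\pm i\sin\omega_j t$ and $|1-e^{(\alpha_j+i\omega_j)T}|^2=1+e^{2\alpha_j T}-2e^{\alpha_j T}\cos\omega_j T$ reproduces the first branch of \eqref{eq:G_pos}. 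In the overdamped case $\zeta_j>1$, the eigenvalues $\beta_j,\gamma_j$ are real and distinct, so the same projection of \eqref{eq:G_firstorder} gives the third branch directly, after multiplying by $1/(\beta_j-\gamma_j)$ to account for the standard conversion between the complex-eigenvalue normalization used in Lemma~\ref{Lemma:1} and the displacement component.

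The main obstacle is the critically damped case $\zeta_j=1$, where the eigenvalue $\alpha_j$ is repeated and the coefficient matrix of the first-order form is defective, so Lemma~\ref{Lemma:1} cannot be invoked verbatim (its hypothesis of diagonalizability fails). The cleanest route is to build $L_j$ directly for this case: solve $\ddot{u}+2\alpha_j\dot u+\alpha_j^2 u=\delta(t)$ with vanishing initial conditions to get the impulse response $h(t)\,t e^{\alpha_j t}$, then periodize it through the standard sum $L_j(t,T)=\sum_{m\ge 0}h(t+mT)(t+mT)e^{\alpha_j(t+mT)}$ and evaluate the geometric-type series $\sum m r^m=r/(1-r)^2$ with $r=e^{\alpha_j T}$. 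A pleasant sanity check is to obtain the same formula by taking the limit $\omega_j\to0$ in the underdamped branch (or $\beta_j-\gamma_j\to 0$ in the overdamped branch) via l'H\^opital's rule; this confirms continuity of $L_j$ across the three regimes.

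Finally, uniqueness follows because the non-resonance condition \eqref{eq:nonresonance-1} excludes the homogeneous $T$-periodic solutions that would otherwise obstruct uniqueness: any two $T$-periodic responses would differ by a $T$-periodic solution of the homogeneous modal equation, but under \eqref{eq:nonresonance-1} no $\lambda_{2j-1},\lambda_{2j}$ is an integer multiple of $2\pi i/T$, so only the trivial homogeneous $T$-periodic solution exists. Assembling the scalar Green's functions into the diagonal matrix $\mathbf{L}$ and undoing the modal transformation then yields the stated integral formula, completing the argument.
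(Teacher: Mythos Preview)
Your argument is correct, but it takes a somewhat different route from the paper's proof. The paper does not invoke Lemma~\ref{Lemma:1} at all at the modal level. Instead, for each mode it writes the $2\times 2$ first-order system with coefficient matrix $\mathbf{A}_j$, uses the variation-of-constants formula with the explicit fundamental matrix $\mathbf{N}^{(j)}(t)=e^{\mathbf{A}_j t}$, imposes $y_j(T)=y_j(0)$ to solve for the initial condition (this involves $[\mathbf{I}-\mathbf{N}^{(j)}(T)]^{-1}$, whose determinant is $1-\operatorname{Trace}\mathbf{N}^{(j)}(T)+\det\mathbf{N}^{(j)}(T)$), and then substitutes back into the Duhamel integral to read off $L_j$ and the velocity kernel $J_j$. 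The three damping regimes are handled simply by inserting the three explicit forms of $\mathbf{N}^{(j)}(t)$, including the Jordan-block exponential when $\zeta_j=1$.

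The practical differences are these. The paper's route is uniform in methodology: the same periodicity-boundary-value computation is done once and the case split enters only through the formula for $\mathbf{N}^{(j)}$, so the defective case $\zeta_j=1$ requires no special argument and the velocity Green's function $J_j$ comes out for free. Your route is more economical for $\zeta_j\neq 1$ because it recycles Lemma~\ref{Lemma:1}, and your periodization-of-the-impulse-response derivation for $\zeta_j=1$ is a nice, physically transparent alternative (it relies on $\alpha_j<0$, which holds here since $\zeta_j=1$ gives $\alpha_j=-\omega_{0,j}$); the $\omega_j\to 0$ limit you mention is a useful consistency check that the paper does not spell out. Either approach is acceptable for the proof.
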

	The periodic Green's function $L_{j}(t,T)$ for a single-degree-of-freedom,
	underdamped harmonic oscillator has already been derived in the controls
	literature (see, e.g., Kovaleva \cite{kovaleva99},~p.~19., formula~(1.40), or Babitsky~\cite{Babistky Vibro Impact}~p.~90). They also
	note a simplification when the periodic forcing function has an odd
	symmetry with respect to half the period (e.g., sinusoidal forcing),
	in which case the integral can be taken over just half the period
	with another Green's function. Kovaleva \cite{kovaleva99} also lists
	the Green function without damping for the case of a multi-degree-of-freedom
	system without damping, in transfer-function notation. In summary, formula \eqref{eq:G_pos} does not
	seem to appear in the vibrations literature, but earlier controls
	literature has simpler forms of it (single-degree-of-freedom modal
	form with damping, or multi-dimensional form without damping in modal
	coordinates), albeit for the under-damped case only. 
	
	Kovaleva~\cite{kovaleva99} also observes for undamped
	multi-degree-of-freedom systems that an integral equation with this Green's function can be written out for nonlinear systems, then refers
	to Rosenwasser~\cite{rosenwasse69} for existence conditions and approximate solution methods. Chapter~4.2 of Babitsky~and~Krupenin~\cite{babitsky12}
	also discusses this material in the context of the response of linear
	discontinuous systems, citing Rosenwasser~\cite{rosenwasse69} for
	a similar formulation. We formalize and generalize these discussions as a theorem here: 
	\begin{thm}
		\label{Thm:special_case_IE_P} (i) If $\mathbf{x}(t)$ is a $T-$periodic
		solution of the nonlinear system \eqref{eq:eqm_posdep}, then $\mathbf{x}(t)$
		must satisfy the integral equation 
		
		\begin{equation}
		\mathbf{x}(t)=\mathbf{U}\int_{0}^{T}\mathbf{L}(t-s,T)\mathbf{U}^{\top}\left[\mathbf{f}(s)-\mathbf{S}(\mathbf{x}(s))\right]\,ds\,,\label{eq:int_periodic-1}
		\end{equation}
		with $\mathbf{L}$ defined in \eqref{eq:G_pos}. 
		
		(ii) Furthermore, any continuous, T-periodic solution of $\mathbf{x}(t)$
		of \eqref{eq:int_periodic-1} is a $T-$periodic solution of the nonlinear
		system \eqref{eq:eqm_posdep}.
	\end{thm}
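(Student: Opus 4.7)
The plan is to mirror the proof of Theorem~\ref{Theorem:IE_P}, but in the real, second-order setting where the undamped modal transformation $\mathbf{U}$ and the real Green's function matrix $\mathbf{L}$ of Lemma~\ref{Lemma:special_case_QP_linear} replace the complex $\mathbf{V}$ and $\mathbf{G}$. The proportional-damping and purely geometric-nonlinearity assumptions make this reduction possible and halve the dimension of the integral equation.

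For part~(\textit{i}), I would assume $\mathbf{x}(t)$ is a $T$-periodic solution of \eqref{eq:eqm_posdep}. Because $\mathbf{S}$ is Lipschitz continuous and $\mathbf{x}(t)$ is continuous and $T$-periodic, the composition $\mathbf{S}(\mathbf{x}(t))$ is also $T$-periodic. Rewriting the ODE as
\begin{equation*}
\mathbf{M}\ddot{\mathbf{x}}+\mathbf{C}\dot{\mathbf{x}}+\mathbf{K}\mathbf{x}=\mathbf{f}(t)-\mathbf{S}(\mathbf{x}(t)),
\end{equation*}
I would view the right-hand side as a known $T$-periodic forcing of the linear system~\eqref{eq:xeq}. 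Under the non-resonance condition~\eqref{eq:nonresonance-1}, Lemma~\ref{Lemma:special_case_QP_linear} yields a unique $T$-periodic response to this effective forcing, given precisely by the integral formula~\eqref{eq:int_periodic-1}. Since $\mathbf{x}(t)$ is by assumption such a response, it must coincide with that formula.

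For part~(\textit{ii}), I would start from a continuous $T$-periodic $\mathbf{x}(t)$ satisfying~\eqref{eq:int_periodic-1}, pass to modal coordinates $\mathbf{y}=\mathbf{U}^{-1}\mathbf{x}$, and reduce the claim to showing that each scalar identity
\begin{equation*}
y_{j}(t)=\int_{0}^{T}L_{j}(t-s,T)\,\tilde{\varphi}_{j}(s)\,ds,
\qquad \tilde{\boldsymbol{\varphi}}(s):=\mathbf{U}^{\top}[\mathbf{f}(s)-\mathbf{S}(\mathbf{x}(s))],
\end{equation*}
implies the modal ODE~\eqref{eq:forced_modal_component-1} with forcing $\tilde{\varphi}_{j}$. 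The key fact, which I would either check component-by-component from the explicit expressions in~\eqref{eq:G_pos} or invoke from Lemma~\ref{Lemma:special_case_QP_linear}, is that $L_{j}(\cdot,T)$ satisfies the periodic Green's-function identity
\begin{equation*}
\ddot{L}_{j}(t,T)+2\zeta_{j}\omega_{0,j}\dot{L}_{j}(t,T)+\omega_{0,j}^{2}L_{j}(t,T)=\sum_{\ell\in\mathbb{Z}}\delta(t-\ell T)
\end{equation*}
in the distributional sense. Convolving against the $T$-periodic $\tilde{\varphi}_{j}$ and moving $\mathbf{S}(\mathbf{x})$ back to the left-hand side recovers~\eqref{eq:eqm_posdep}.

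The main technical obstacle is the differentiation through the Heaviside contribution $h(t)\sin\omega_{j}t$ (or its analogues in the critically and over-damped cases). The first derivative is continuous at $t=0$ because each Heaviside-multiplied factor vanishes there, so Leibniz's rule produces no boundary jump; the second derivative, however, generates the Dirac unit required to match $\tilde{\varphi}_{j}(t)$ after integrating over the period. Since this computation is entirely parallel to the one already carried out in the proof of Theorem~\ref{Theorem:IE_P}, I would handle it by a direct application of Leibniz's rule and defer the explicit bookkeeping of the three damping regimes to an appendix.
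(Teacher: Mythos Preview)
Your proposal is correct and follows the same structure as the paper's treatment of Theorem~\ref{Theorem:IE_P}: treat $\mathbf{f}(t)-\mathbf{S}(\mathbf{x}(t))$ as a $T$-periodic forcing, invoke the linear result (here Lemma~\ref{Lemma:special_case_QP_linear}) for part~(\textit{i}), and differentiate through the convolution for part~(\textit{ii}).

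The only difference is one of economy. The paper does not reprove anything here; its proof is the single sentence ``This result is just a special case of Theorem~\ref{Theorem:IE_P}, with the specific form of the Green's function listed in~\eqref{eq:G_pos}.'' In other words, it embeds \eqref{eq:eqm_posdep} into the first-order form~\eqref{eq:firstorder}, applies Theorem~\ref{Theorem:IE_P} verbatim, and then reads off the position component using the explicit $\mathbf{L}$ from Lemma~\ref{Lemma:special_case_QP_linear}. Your route---redoing the argument directly in second-order modal coordinates and verifying the distributional identity for $L_j$---is more self-contained and makes the dimension-halving explicit, but it duplicates work already packaged in Theorem~\ref{Theorem:IE_P}. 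Either is fine; the paper's version is shorter.
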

	\begin{proof}
		This result is just a special case of Theorem \ref{Theorem:IE_P},
		with the specific form of the Green's function listed in \eqref{eq:G_pos}.
	\end{proof}
	\begin{rem}
		{\small
			Once a solution to \eqref{eq:int_periodic-1} is obtained for the
			position variables $\mathbf{x}$ (cf. Section \ref{sec:Iterative_Solution}
			for solution methods), the corresponding velocity $\dot{\mathbf{x}}$
			can be recovered as 
			
			\[
			\dot{\mathbf{x}}(t)=\mathbf{U}\int_{0}^{T}\mathbf{J}(t-s,T)\mathbf{U}^{\top}\left[\mathbf{f}(s)-\mathbf{S}(\mathbf{x}(s))\right]\,ds,
			\]
			where ${\mathbf{J}(t-s,T)=\mathrm{diag}\left(J_{1}(t-s,T),\ldots,J_{n}(t-s,T)\right)\in\mathbb{R}^{n\times n}}$
			is the diagonal Green's matrix whose diagonal elements are given by  
			\begin{equation}
			\begin{split}J_{j}(t,T) & =\begin{cases}
			\begin{array}{c}
			\frac{e^{\alpha_{j}t}}{\omega_{j}}\left[\frac{e^{\alpha_{j}T}\left[\omega_{j}\left(\cos\omega_{j}(T+t)-e^{\alpha_{j}T}\cos\omega_{j}t\right)+\alpha_{j}\left(\sin\omega_{j}(T+t)-e^{\alpha_{j}T}\sin\omega_{j}t\right)\right]}{1+e^{2\alpha_{j}T}-2e^{\alpha_{j}T}\cos\omega_{j}T}+\right.\\
			\left.h(t)\left(\omega_{j}\cos\omega_{j}t+\alpha_{j}\sin\omega_{j}t\right)\right]
			\end{array}\quad, & \zeta_{j}<1\\
			\frac{e^{\alpha_{j}(T+t)}\left[\left(1-e^{\alpha_{j}T}\right)\left(1+\alpha_{j}t\right)+\alpha_{j}T\right]}{\left(1-e^{\alpha_{j}T}\right)^{2}}+h(t)\left(e^{\alpha_{j}t}+\alpha_{j}te^{\alpha_{j}t}\right)\,, & \zeta_{j}=1\\
			\frac{1}{(\beta_{j}-\gamma_{j})}\left[\frac{\beta_{j}e^{\beta_{j}(T+t)}}{1-e^{\beta_{j}T}}-\frac{\gamma_{j}e^{\gamma_{j}(T+t)}}{1-e^{\gamma_{j}T}}+h(t)\left(\beta_{j}e^{\beta_{j}t}-\gamma_{j}e^{\gamma_{j}t}\right)\right], & \zeta_{j}>1
			\end{cases}\end{split}
			\,,\label{eq:J_vel}
			\end{equation}
			as shown in Appendix~\ref{sec:ProofLemma3}.}
	\end{rem}
	
	Finally, the following result extends the integral equation formulation
	of Theorem \ref{Thm:special_case_IE_P} to quasi-periodic forcing.
	\begin{thm}
		\label{Thm:special_case_IE_QP}(i) If $\mathbf{x}(t)$ is a quasi-periodic
		solution of the nonlinear system \eqref{eq:eqm_posdep} with frequency
		basis $\boldsymbol{\Omega}$, and the nonlinear function $\mathbf{S}(\mathbf{x}(t))$
		is also quasi-periodic with the same frequency basis $\boldsymbol{\Omega}$,
		then $\mathbf{x}(t)$ must satisfy the integral equation: 
		
		\begin{align}
		\mathbf{x}(t) & =\mathbf{U}\sum_{\boldsymbol{\kappa}\in\mathbb{Z}^{k}}\int_{0}^{T_{\boldsymbol{\kappa}}}\mathbf{L}(t-s,T_{\boldsymbol{\kappa}})\mathbf{U}^{\top}\left[\mathbf{f}(s)-\mathbf{S}(\mathbf{x}(s))\right]\,ds\,\label{eq:int_quasiperiodic_posdep}
		\end{align}
		(ii) Furthermore, any continuous quasi-periodic solution $\mathbf{x}(t)$
		to \eqref{eq:int_quasiperiodic_posdep}, with frequency basis $\boldsymbol{\Omega}$,
		is a quasi-periodic solution of the nonlinear system \eqref{eq:0}.
	\end{thm}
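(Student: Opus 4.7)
The plan is to obtain Theorem \ref{Thm:special_case_IE_QP} as a direct specialization of Theorem \ref{Theorem:IE_QP} to the proportional damping, position-only nonlinearity setting, where the $2n$-dimensional first-order Green's function $\mathbf{G}$ collapses onto its $n$-dimensional second-order counterpart $\mathbf{L}$. Equivalently, I would mirror the proof of Theorem \ref{Theorem:IE_P}/Theorem \ref{Theorem:IE_QP} line-by-line, but replacing the first-order modal decomposition \eqref{eq:modalfirstorder} by the scalar second-order decoupled equations \eqref{eq:forced_modal_component-1}.

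For part (i), given a quasi-periodic $\mathbf{x}(t)$ with frequency basis $\boldsymbol{\Omega}$ and $\mathbf{S}(\mathbf{x}(t))$ quasi-periodic with the same basis by hypothesis, I would treat $\mathbf{f}(t)-\mathbf{S}(\mathbf{x}(t))$ as a known quasi-periodic forcing of the linear system \eqref{eq:xeq}. The first step is to extend Lemma \ref{Lemma:special_case_QP_linear} from periodic to quasi-periodic forcing in exact analogy with how Lemma \ref{Lemma:QP_linear} extends Lemma \ref{Lemma:1}: expand the forcing as a sum of Fourier modes indexed by $\boldsymbol{\kappa}\in\mathbb{Z}^{k}$, apply the periodic Green's function result component-by-component with period $T_{\boldsymbol{\kappa}}$, and superpose the responses by linearity. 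This yields the quasi-periodic linear response representation
\[
\mathbf{x}(t) = \mathbf{U}\sum_{\boldsymbol{\kappa}\in\mathbb{Z}^{k}} \int_{0}^{T_{\boldsymbol{\kappa}}}\mathbf{L}(t-s,T_{\boldsymbol{\kappa}})\mathbf{U}^{\top}\mathbf{f}(s)\,ds,
\]
and substituting $\mathbf{f}(s)-\mathbf{S}(\mathbf{x}(s))$ for $\mathbf{f}(s)$ produces \eqref{eq:int_quasiperiodic_posdep}. For part (ii), I would start from a continuous quasi-periodic solution of \eqref{eq:int_quasiperiodic_posdep} and verify that it solves \eqref{eq:eqm_posdep} by modal projection: each scalar kernel $L_{j}(t,T_{\boldsymbol{\kappa}})$ is, by construction (cf.\ Lemma \ref{Lemma:special_case_QP_linear}), the $T_{\boldsymbol{\kappa}}$-periodic Green's function of the modal oscillator \eqref{eq:forced_modal_component-1}. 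Differentiating the modal projection of \eqref{eq:int_quasiperiodic_posdep} twice in $t$ via the Leibniz rule, with the jump of $\partial_{t}L_{j}$ at $s=t$ producing the required delta-source, reproduces \eqref{eq:forced_modal_component-1} term-by-term. Summing over $\boldsymbol{\kappa}$ and transforming back through $\mathbf{U}$ yields \eqref{eq:eqm_posdep}.

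The main obstacle is the legitimacy of interchanging differentiation with the infinite $\boldsymbol{\kappa}$-sum, which is not automatic for merely continuous $\mathbf{x}$ and $\mathbf{f}$. The cleanest way to sidestep this, following Remark \ref{rem:qp_fourier}, is to work in Fourier space: the $\boldsymbol{\kappa}$-th Fourier coefficient of \eqref{eq:int_quasiperiodic_posdep} reduces to the algebraic identity of the form \eqref{eq:int_quasiper_explicit} specialized to proportional damping, with amplifiers built from $\mathbf{L}$ playing the role of $\mathbf{H}(T_{\boldsymbol{\kappa}})$. These Fourier-by-Fourier identities are manifestly equivalent to the Fourier projection of \eqref{eq:eqm_posdep}, and the uniform bound $h_{\max}$ from \eqref{eq:h_max} under the non-resonance assumption, combined with Lipschitz continuity of $\mathbf{S}$, guarantees that the resulting Fourier series reconstructs the pointwise ODE. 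Since the same construction applies to $\mathbf{f}-\mathbf{S}(\mathbf{x})$ with $\mathbf{x}$ held fixed, both implications follow.
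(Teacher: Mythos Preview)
Your proposal is correct and takes essentially the same approach as the paper: the paper's proof simply states that this theorem is a special case of Theorem~\ref{Theorem:IE_QP}, and your plan to specialize the first-order result to the proportionally damped, position-only setting (with $\mathbf{L}$ playing the role of $\mathbf{G}$ and $\mathbf{U}$ that of $\mathbf{V}$) is exactly that specialization, carried out in more detail than the paper itself provides.
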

	\begin{proof}
		This theorem is just a special case of Theorem \ref{Theorem:IE_QP}. 
	\end{proof}
	
	In analogy with Remark~\ref{rem:qp_fourier}, we make the following remark for geometric nonlinearities and structural damping.
	\begin{rem}
		{\small
			\label{rem:Fourier_qper_posdep}
			With the Fourier expansion $ {\mathbf{x}(t) = \sum_{\boldsymbol{\kappa}\in\mathbb{Z}} \mathbf{x}_{\boldsymbol{\kappa}} e^{i\left\langle \boldsymbol{\kappa},\boldsymbol{\Omega}\right\rangle t} } $, eq.~\eqref{eq:int_quasiperiodic_posdep} can be equivalently written as the system
			\begin{equation}
			\mathbf{x}_{\boldsymbol{\kappa}}=\mathbf{U}\mathbf{Q}(T_{\boldsymbol{\kappa}})\mathbf{U}^{\top}\left[\mathbf{f}_{\boldsymbol{\kappa}}-\mathbf{S}_{\boldsymbol{\kappa}}\{\mathbf{x}\}\right],\quad \boldsymbol{\kappa}\in\mathbb{Z}^{k}\,, \label{eq:int_explicit_qp_posdep}
			\end{equation}
			where 	\[
			\mathbf{Q}(T_{\boldsymbol{\kappa}})=\mathrm{diag}\left(Q_{1}(T_{\boldsymbol{\kappa}}),\ldots,Q_{n}(T_{\boldsymbol{\kappa}})\right)\in\mathbb{C}^{n\times n},
			\] is the diagonal matrix
			of the amplification factors, which are explicitly given by  
			
			\begin{equation}
			Q_{j}(T_{\boldsymbol{\kappa}}):=\begin{cases}
			\frac{1}{(i\left\langle \boldsymbol{\kappa},\boldsymbol{\Omega}\right\rangle -\alpha_{j})^{2}+\omega_{j}^{2}}\,, & \zeta_{j}<1\\
			\frac{1}{(i\left\langle \boldsymbol{\kappa},\boldsymbol{\Omega}\right\rangle -\alpha_{j})^{2}}\,, & \zeta_{j}=1\\
			\frac{1}{(\beta_{j}-i\left\langle \boldsymbol{\kappa},\boldsymbol{\Omega}\right\rangle )(\gamma_{j}-i\left\langle \boldsymbol{\kappa},\boldsymbol{\Omega}\right\rangle )}\,, & \zeta_{j}>1,
			\end{cases},\quad j=1,\dots,n\,,\label{eq:ampl_fkt_posdep}
			\end{equation}
			as derived in Appendix~\ref{sec:Proof4}.}
	\end{rem}
	
	\subsection{The unforced conservative case}
	In contrast to dissipative systems, which have isolated (quasi-)periodic
	solutions in response to (quasi-) periodic forcing, unforced conservative
	systems will generally exhibit families of periodic or quasi-periodic
	orbits (cf. Kelley~\cite{kelley69} or Arnold~\cite{arnold89}). The calculation of (quasi-) periodic orbits in an autonomous system such as \begin{equation}
	\mathbf{B}\dot{\mathbf{z}}=\mathbf{A}\mathbf{z}+\mathbf{R}(\mathbf{z})\,,\label{eq:unforced_first_order}
	\end{equation}
	is different
	from that in the forced case mainly due to two reasons:
	\begin{enumerate}
		\item The frequencies of such (quasi-) periodic oscillations are intrinsic
		to the system. This means that the time period $T$, or the base frequency
		vector $\boldsymbol{\Omega}$, of the response is a priori unknown. 
		\item Any given (quasi-) periodic solution $\mathbf{z}(t)$ to the autonomous system \eqref{eq:unforced_first_order} is a part of a family of (quasi-) periodic solutions, with an arbitrary phase shift $\theta\in\mathbb{R}$. 
	\end{enumerate}
	Nonetheless, Theorems \ref{Theorem:IE_P}-\ref{Thm:special_case_IE_QP}
	still hold for system \eqref{eq:unforced_first_order} with the external
	forcing function set to zero. Special care needs to be taken, however,
	in the numerical implementation of these results for unforced mechanical systems, as we shall discuss in Section \ref{subsec:UnforcedAutoCont}.
	
	\section{Iterative solution of the integral equations}
	
	\label{sec:Iterative_Solution}
	
	We would like to solve integral equations of the form (cf. Theorems
	\ref{Theorem:IE_P} and \ref{Thm:special_case_IE_P}) 
	\begin{equation}
	\mathbf{z}(t)=\int_{0}^{T}\mathbf{V}\mathbf{G}(t-s,T)\mathbf{V}^{-1}\left[\mathbf{F}(s)-\mathbf{R}(\mathbf{z}(s))\right]\,ds,\quad t\in[0,T]\label{eq:IE_periodic}
	\end{equation}
	to obtain periodic solutions, or integral equations of the form (cf.
	Theorem \ref{Theorem:IE_QP} and \ref{Thm:special_case_IE_QP})
	\begin{equation}
	\mathbf{z}(t)=\mathbf{V}\sum_{\boldsymbol{\kappa}\in\mathbb{Z}^{k}}\mathbf{H}(T_{\kappa})\mathbf{V}^{-1}\left(\mathbf{F}_{\boldsymbol{\kappa}}-\mathbf{R}_{\boldsymbol{\kappa}}\{\mathbf{z}\}\right)e^{i\left\langle \boldsymbol{\kappa},\boldsymbol{\Omega}\right\rangle t}\label{eq:IE_QP}
	\end{equation}
	to obtain quasi-periodic solutions of system \eqref{eq:firstorder}.  In the following, we propose iterative methods to solve these equations. First, we discuss a Picard iteration, then subsequently, a Newton-Raphson scheme.

	\subsection{Picard iteration\label{sec:Piccard}}
	
	Picard~\cite{Picard} proposed an iteration scheme to show local existence of solutions to ordinary
	differential equations, which is also used as practical iteration
	scheme to approximate the solutions to boundary value problems in
	numerical analysis (cf. Bailey~et~al.~\cite{Bailey_NumBVP}). We derive
	explicit conditions on the convergence of the Picard iteration when applied to equations~\eqref{eq:IE_periodic}-\eqref{eq:IE_QP}.
	\subsubsection{Periodic response\label{subsec:Picard_periodic}}
	
	We define the right-hand side of the integral equation \eqref{eq:int_periodic}
	as the mapping~$\boldsymbol{\mathcal{G}}_{P}$ acting on the phase
	space vector $\mathbf{z}$, i.e.,
	
	\begin{equation}
	\mathbf{z}(t)=\mathbf{\boldsymbol{\mathcal{G}}}_{P}(\mathbf{z})\,(t):=\int_{0}^{T}\mathbf{V}\mathbf{G}(t-s,T)\mathbf{V}^{-1}\left[\mathbf{F}(s)-\mathbf{R}(\mathbf{z}(s))\right]\,ds,\quad t\in[0,T].\label{eq:Map_periodic}
	\end{equation}
	Clearly, a fixed point of the mapping $\mathbf{\boldsymbol{\mathcal{G}}}_{P}$
	in \eqref{eq:Map_periodic} corresponds to a periodic steady-state
	response of system \eqref{eq:0} by Theorem~\ref{Theorem:IE_P}.
	Starting with an initial guess $\mathbf{z}_{0}(t)$ for the periodic
	orbit, the Picard iteration applied to the mapping \eqref{eq:IE_periodic}
	is given by 
	\begin{equation}
	\mathbf{z}_{\ell+1}=\mathbf{\boldsymbol{\mathcal{G}}}_{P}(\mathbf{z}_{\ell})\,,\quad \ell \in \mathbb{N}.\label{eq:Picard_P}
	\end{equation}
	To derive a convergence criterion for the Picard iteration, we define the
	sup norm ${\left\Vert \cdot\right\Vert _{0}=\max_{t\in[0,T]}\left|\cdot\right|}$
	and consider a $\delta-$ball of $C^{0}$-continuous and $T$-periodic
	functions centered at $\mathbf{z}_{0}$:
	\begin{equation}
	C_{\delta}^{\mathbf{z}_{0}}[0,T]:=\left\{ \mathbf{z}\colon[0,T]\to\mathbb{R}^{2n}\,\,\vert\quad\mathbf{z}\in C^{0}[0,T],\quad\mathbf{z}(0)=\mathbf{z}(T),\quad\left\Vert \mathbf{z}-\mathbf{z}_{0}\right\Vert _{0}\leq\delta\right\} .\label{eq:Space_per_fcn}
	\end{equation}
	We further define the first iterate under the map $ \boldsymbol{\mathcal{G}}_{P} $ as
	\begin{equation}
	\boldsymbol{\mathcal{E}}(t)=\mathcal{\boldsymbol{G}}_{P}(\mathbf{z}_{0})(t) =\int_{0}^{T}\mathbf{V}\mathbf{G}(t-s,T)\mathbf{V}^{-1}\left[\mathbf{F}(s)-\mathbf{R}(\mathbf{z}_{0}(s))\right]\,ds,\quad t\in[0,T],\label{eq:initial_error}
	\end{equation}
	and denote with $L_{\delta}^{\mathbf{z}_{0}}$ a uniform-in-time Lipschitz
	constant for the nonlinearity $\mathbf{R}(\mathbf{z})$ with respect
	to its argument $\mathbf{z}$ within $C_{\delta}^{\mathbf{z}_{0}}[0,T]$.
	With that notation, we obtain the following theorem for the convergence
	of a Picard iteration performed on \eqref{eq:IE_periodic}
	\begin{thm}
		\label{Thm:Picard_it_periodic}If the conditions 
		\begin{eqnarray}
		L_{\delta}^{\mathbf{z}_{0}} & < & \frac{1}{a\left\Vert \mathbf{V}\right\Vert \left\Vert \mathbf{V}^{-1}\right\Vert \Gamma(T)}\,,\label{eq:Lipschitz_cond}\\
		\delta & \geq & \frac{\left\Vert \boldsymbol{\mathcal{E}}\right\Vert _{0}}{1-\left\Vert \mathbf{V}\right\Vert \left\Vert \mathbf{V}^{-1}\right\Vert L_{\delta}^{\mathbf{z}_{0}}\Gamma(T)}\,,\label{eq:delta_cond}
		\end{eqnarray}
		hold for some real number $a\geq1$, then the mapping $\boldsymbol{\mathcal{G}}_{P}$
		defined in equation~\eqref{eq:IE_periodic} has a unique fixed point
		in the space~\eqref{eq:Space_per_fcn} and this fixed point can be
		found via the successive approximation
		\begin{equation}
		\mathbf{z}_{\ell+1}(t)=\mathcal{\boldsymbol{\mathcal{G}}}_{P}(\mathbf{z}_{\ell})\,(t)=\int_{0}^{T}\mathbf{V}\mathbf{G}(t-s,T)\mathbf{V}^{-1}\left[\mathbf{F}(s)-\mathbf{R}(\mathbf{z}_{\ell}(s))\right]\,ds,\quad \ell\in\mathbb{N}\label{eq:Picard_it_periodic}
		\end{equation}
	\end{thm}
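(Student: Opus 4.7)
The plan is to apply the Banach fixed point theorem to the operator $\boldsymbol{\mathcal{G}}_P$ on the closed subset $C_\delta^{\mathbf{z}_0}[0,T]$ of the Banach space $\bigl(C^0([0,T];\mathbb{R}^{2n}),\|\cdot\|_0\bigr)$. Two properties must be checked: that $\boldsymbol{\mathcal{G}}_P$ is a contraction on $C_\delta^{\mathbf{z}_0}[0,T]$, and that it maps this set into itself. Periodicity of the iterates and continuity in $t$ are automatic: $\boldsymbol{\mathcal{G}}_P(\mathbf{z})(t+T)=\boldsymbol{\mathcal{G}}_P(\mathbf{z})(t)$ by the $T$-periodicity of $\mathbf{G}(\cdot,T)$ in its first argument and of $\mathbf{F}$, while the (piecewise) integrability of $\mathbf{G}$ against a continuous integrand yields a continuous output.

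For the contraction estimate, I would take two elements $\mathbf{z}_1,\mathbf{z}_2\in C_\delta^{\mathbf{z}_0}[0,T]$, write
\begin{equation*}
\boldsymbol{\mathcal{G}}_P(\mathbf{z}_1)(t)-\boldsymbol{\mathcal{G}}_P(\mathbf{z}_2)(t)=-\int_{0}^{T}\mathbf{V}\mathbf{G}(t-s,T)\mathbf{V}^{-1}\bigl[\mathbf{R}(\mathbf{z}_1(s))-\mathbf{R}(\mathbf{z}_2(s))\bigr]\,ds,
\end{equation*}
and bound the norm by pulling out $\|\mathbf{V}\|\|\mathbf{V}^{-1}\|$, using the uniform-in-time Lipschitz constant $L_\delta^{\mathbf{z}_0}$ of $\mathbf{R}$ on the $\delta$-ball, and invoking the Green's function bound of Remark~\ref{rmk:Gamma_T} to replace $\int_0^T\|\mathbf{G}(t-s,T)\|_0\,ds$ by $\Gamma(T)$. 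This yields
\begin{equation*}
\|\boldsymbol{\mathcal{G}}_P(\mathbf{z}_1)-\boldsymbol{\mathcal{G}}_P(\mathbf{z}_2)\|_0 \leq \|\mathbf{V}\|\|\mathbf{V}^{-1}\|\,L_\delta^{\mathbf{z}_0}\,\Gamma(T)\,\|\mathbf{z}_1-\mathbf{z}_2\|_0,
\end{equation*}
and assumption \eqref{eq:Lipschitz_cond} makes the prefactor strictly less than $1/a\le 1$, giving a genuine contraction with ratio $q:=\|\mathbf{V}\|\|\mathbf{V}^{-1}\|L_\delta^{\mathbf{z}_0}\Gamma(T)<1$.

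For self-mapping, I would use the triangle inequality
\begin{equation*}
\|\boldsymbol{\mathcal{G}}_P(\mathbf{z})-\mathbf{z}_0\|_0\leq \|\boldsymbol{\mathcal{G}}_P(\mathbf{z})-\boldsymbol{\mathcal{G}}_P(\mathbf{z}_0)\|_0+\|\boldsymbol{\mathcal{G}}_P(\mathbf{z}_0)-\mathbf{z}_0\|_0\leq q\,\delta+\|\boldsymbol{\mathcal{E}}-\mathbf{z}_0\|_0
\end{equation*}
for any $\mathbf{z}\in C_\delta^{\mathbf{z}_0}[0,T]$, and then require the right-hand side to be at most $\delta$; solving for $\delta$ gives the threshold $\|\boldsymbol{\mathcal{E}}-\mathbf{z}_0\|_0/(1-q)$, which agrees with \eqref{eq:delta_cond} under the natural choice $\mathbf{z}_0\equiv\mathbf{0}$ (so $\boldsymbol{\mathcal{E}}-\mathbf{z}_0=\boldsymbol{\mathcal{E}}$). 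With these two properties, the Banach fixed point theorem supplies a unique fixed point $\mathbf{z}^\ast\in C_\delta^{\mathbf{z}_0}[0,T]$, and the successive approximations $\mathbf{z}_{\ell+1}=\boldsymbol{\mathcal{G}}_P(\mathbf{z}_\ell)$ converge to $\mathbf{z}^\ast$ in the sup norm; by Theorem~\ref{Theorem:IE_P}(ii), $\mathbf{z}^\ast$ is a $T$-periodic solution of \eqref{eq:firstorder}.

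\textbf{Main obstacle.} The contraction estimate itself is routine; the subtlety lies in matching the invariance inequality with the precise form of \eqref{eq:delta_cond}. One has to be careful whether the "error" in the denominator refers to $\|\boldsymbol{\mathcal{E}}-\mathbf{z}_0\|_0$ or $\|\boldsymbol{\mathcal{E}}\|_0$; the formulation given presumes an initial guess (such as $\mathbf{z}_0\equiv\mathbf{0}$) for which these agree. A secondary technical point is verifying that $C_\delta^{\mathbf{z}_0}[0,T]$ is a complete metric space -- this follows from the fact that it is a closed subset of the Banach space $(C^0([0,T];\mathbb{R}^{2n}),\|\cdot\|_0)$, since both the pointwise periodicity condition $\mathbf{z}(0)=\mathbf{z}(T)$ and the closed-ball constraint are preserved under uniform limits.
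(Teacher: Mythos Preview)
Your proposal is correct and follows essentially the same route as the paper's proof: apply the Banach fixed point theorem on the closed ball $C_\delta^{\mathbf{z}_0}[0,T]$, obtaining the contraction estimate from the Lipschitz bound on $\mathbf{R}$ together with the Green's-function bound $\Gamma(T)$, and the self-mapping property by splitting at $\boldsymbol{\mathcal{G}}_P(\mathbf{z}_0)=\boldsymbol{\mathcal{E}}$. Your observation about $\|\boldsymbol{\mathcal{E}}-\mathbf{z}_0\|_0$ versus $\|\boldsymbol{\mathcal{E}}\|_0$ is apt---the paper's own appendix derivation in fact bounds $\|\boldsymbol{\mathcal{G}}_P(\mathbf{z})\|_0$ rather than $\|\boldsymbol{\mathcal{G}}_P(\mathbf{z})-\mathbf{z}_0\|_0$, so the stated form of \eqref{eq:delta_cond} likewise matches the argument only when $\mathbf{z}_0\equiv\mathbf{0}$ or when one reads $\|\boldsymbol{\mathcal{E}}\|_0$ as shorthand for $\|\boldsymbol{\mathcal{E}}-\mathbf{z}_0\|_0$.
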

	\begin{proof}
		The proof relies on the Banach fixed point theorem. We establish that
		the mapping~\eqref{eq:IE_periodic} is well-defined on the space~\eqref{eq:Space_per_fcn}.
		Subsequently, we prove that under conditions~\eqref{eq:Lipschitz_cond}-\eqref{eq:delta_cond}, the mapping~\eqref{eq:IE_periodic} is
		a contraction. We detail all this in Appendix \ref{App:Proof_Picard_it_per}. 
	\end{proof}
	\begin{rem}
		{\small	
			\label{Rmk:Picard_C1_NLs}If the nonlinearity $\mathbf{R}(\mathbf{z})$
			is not only Lipschitz but also of class $C^{1}$ with respect to $\mathbf{z}$,
			then condition~\eqref{eq:Lipschitz_cond} can be more specifically written as
			\begin{equation}
			\max_{1\leq j\leq n\,\,\,}\max_{\left|\mathbf{z}-\mathbf{z}_{0}\right|\leq\delta}\left|DR_{j}(\mathbf{z})\right|<\frac{1}{a\varGamma(T)\left\Vert \mathbf{V}\right\Vert \left\Vert \mathbf{V}^{-1}\right\Vert }.\label{eq:Lipschitz_cond_smooth}
			\end{equation}}
	\end{rem}
	
	\begin{rem}
		{\small
			Conditions \eqref{eq:Lipschitz_cond} and \eqref{eq:delta_cond}
			can be merged by rewriting condition \eqref{eq:delta_cond} as 
			\[
			\left\Vert \boldsymbol{\mathcal{E}}\right\Vert _{0}\leq\delta\left[1-\left\Vert \mathbf{V}\right\Vert \left\Vert \mathbf{V}^{-1}\right\Vert L_{\delta}^{\mathbf{z}_{0}}\Gamma(T)\right],
			\]
			which, by condition \eqref{eq:Lipschitz_cond}, holds for sure if
			$a>1$ and $\left\Vert \boldsymbol{\mathcal{E}}\right\Vert _{0}\leq\delta\left(1-\frac{1}{a}\right),$
			or, equivalently, 
			\begin{equation}
			\delta\geq\frac{a\left\Vert \boldsymbol{\mathcal{E}}\right\Vert _{0}}{a-1}.\label{eq:fincon2}
			\end{equation}
			Inserting the condition \eqref{eq:fincon2} for $\delta$ into the
			condition \eqref{eq:Lipschitz_cond} on the Lipschitz constant $L_{\delta}^{\mathbf{z}_{0}}$,
			we obtain the single convergence condition
			
			\begin{equation}
			L_{\frac{a\left\Vert \boldsymbol{\mathcal{E}}\right\Vert _{0}}{a-1}}^{\mathbf{z}_{0}}<\frac{1}{a\varGamma(T)\left\Vert \mathbf{V}\right\Vert \left\Vert \mathbf{V}^{-1}\right\Vert }.\label{eq:single_cond}
			\end{equation}
			For differentiable nonlinearities (cf. Remark \eqref{Rmk:Picard_C1_NLs}),
			the equivalent condition is then given by
			\begin{equation}
			\max_{1\leq j\leq n\,\,\,}\max_{\left|\mathbf{z}-\mathbf{z}_{0}\right|\leq\frac{a\left\Vert \boldsymbol{\mathcal{E}}\right\Vert _{0}}{a-1}}\left|D_{\mathbf{z}}R_{j}(\mathbf{z})\right|<\frac{1}{a\varGamma(T)\left\Vert \mathbf{V}\right\Vert \left\Vert \mathbf{V}^{-1}\right\Vert }.\label{eq:finfincond}
			\end{equation}}
	\end{rem}
	\begin{rem}
		{\small\label{rmk:Picard_per_reduced}In case of geometric (purely position-dependent)
			nonlinearities and proportional damping (cf. Section \ref{subsec:Proportional_damping}),
			we can avoid iterating in the $2n-$dimensional phase space by defining
			the iteration as 
			
			\begin{equation}
			\mathbf{x}_{\ell+1}(t)=\mathcal{\boldsymbol{\mathcal{L}}}_{P}(\mathbf{x}_{\ell})\,(t):=\int_{0}^{T}\mathbf{U}\mathbf{L}(t-s,T)\mathbf{U}^{\top}\left[\mathbf{f}(s)-\mathbf{S}(\mathbf{x})\right]\,ds,\quad t\in[0,T].\label{eq:Picard_per_pos_only}
			\end{equation}
			The existence of the steady state solution and the convergence of
			the iteration \eqref{eq:Picard_per_pos_only} can be proven analogously.} 
	\end{rem}
	Babistky \cite{Babistky Vibro Impact} derives via transfer functions
	an iteration similar to \eqref{eq:Picard_it_periodic} but without an
	explicit convergence proof. He asserts that the iteration is sensitive
	to the choice of the initial conditions $\mathbf{z}_{0}$. We can
	directly confirm this by examining condition~\eqref{eq:delta_cond}.
	Indeed, the norm of the initial error~$\left\Vert \boldsymbol{\mathcal{E}}\right\Vert _{0}$ is small for a good initial guess. Therefore, the $\delta$-ball in
	which the condition~\eqref{eq:Lipschitz_cond} on the Lipschitz constant
	needs to be satisfied can be selected small. 
	
	When no a priori information about the expected steady-state response
	is available, we can select $\mathbf{z}_{0}(t)\equiv\mathbf{0}$.
	Then, the term $\boldsymbol{\mathcal{E}}(0,t)$ is equal to the forced
	response of the linear system (cf. eq. \eqref{eq:initial_error}).
	In this case, the Lipschitz constant needs to be calculated for a
	$\delta-$ball centered at the origin. 
	
	On the other hand, if one supplies an actual $T-$periodic solution $\hat{\mathbf{z}}(t)=\hat{\mathbf{z}}(t+T)$
	of system~\eqref{eq:0}, the condition~\eqref{eq:delta_cond} is trivially
	satisfied for $\delta>0$ and the iteration~\eqref{eq:Picard_it_periodic}
	converges to the unique fixed point, if the single condition 
	\begin{equation}
	L_{\delta}^{\hat{\mathbf{z}}}<\frac{1}{a\left\Vert \mathbf{V}\right\Vert \left\Vert \mathbf{V}^{-1}\right\Vert \Gamma(T)},\qquad a\in\mathbb{R},\:a>1,\label{eq:cond_conv_zhat}
	\end{equation}
	on the Lipschitz constant holds. In the case of differentiable nonlinearities,
	we can rewrite condition~\eqref{eq:cond_conv_zhat} as 
	\[
	\max_{1\leq j\leq n\,\,\,}\max_{0\leq t<T}\left|DR_{j}(\hat{\mathbf{z}}(t))\right|<\frac{1}{a\left\Vert \mathbf{V}\right\Vert \left\Vert \mathbf{V}^{-1}\right\Vert \Gamma(T)},\qquad a\in\mathbb{R},\:a>1.
	\]
	This implies that whenever condition~\eqref{eq:cond_conv_zhat}
	is satisfied, there exists an initial guess $\mathbf{z}_{0}$ such
	that the iteration~\eqref{eq:Picard_it_periodic} converges. However,
	the radius $\delta$ of the ball~\eqref{eq:Space_per_fcn} might be
	small. 
	
	The constant $\varGamma(T)$ (cf. eq.~\eqref{eq:GammaT}) affects
	the convergence of the iteration~\eqref{eq:Picard_it_periodic}. Larger
	damping (i.e., smaller $e^{\mbox{Re}(\lambda_{j})T})$, larger distance
	of the forcing frequency $2\pi/T$ from the natural frequencies (i.e.,
	larger $|1-e^{\lambda_{j}T}|$), and higher forcing frequencies (i.e.,
	smaller $T$) all make the right-hand side of \eqref{eq:finfincond}
	larger and hence are beneficial to the convergence of the iteration.
	Likewise, a good initial guess (i.e., smaller $\left\Vert \boldsymbol{\mathcal{E}}\right\Vert _{0})$
	and smaller nonlinearities (i.e., smaller $\left|DS_{j}(\mathbf{x})\right|$
	) all make the left-hand side of \eqref{eq:finfincond} smaller and
	hence are similarly beneficial to the convergence of the iteration.
	In the context of structural vibrations, higher frequencies, smaller
	forcing amplitudes, and forcing frequencies sufficiently separated
	from the natural frequencies of the system are realistic and affect
	the convergence positively. At the same time, low damping values in
	such systems are also typical and affect the convergence negatively. 
	
	An advantage of the Picard iteration approach we have discussed is that it
	converges monotonically, and hence an upper estimate for the error
	after a finite number of iterations is readily available as the sup
	norm of the difference of the last two iterations. This can be exploited
	in numerical schemes to stop the iteration once the required precision
	is achieved.
	
	\subsubsection{Quasi-periodic response}
	\label{sec:qp_picard}
	We now consider the existence of a quasi-periodic solution under a Picard iteration of equation~\eqref{eq:int_quasiper}, which has
	apparently been completely absent in the literature. We rewrite
	the right-hand side of the integral equation~\eqref{eq:int_quasiper}
	as the mapping 
	\begin{equation}
	\mathbf{z}(t)=\boldsymbol{\mathcal{G}}_{Q}(\mathbf{z})\,(t):=\mathbf{V}\sum_{\boldsymbol{\kappa}\in\mathbb{Z}^{k}}\mathbf{H}(T_{\kappa})\mathbf{V}^{-1}\left(\mathbf{F}_{\boldsymbol{\kappa}}-\mathbf{R}_{\boldsymbol{\kappa}}\{\mathbf{z}\}\right)e^{i\left\langle \boldsymbol{\kappa},\boldsymbol{\Omega}\right\rangle t},\label{eq:Map_quasiper}
	\end{equation}
	where we have made use of the Fourier expansion defined in Remark~\ref{rem:qp_fourier}. 
	
	We consider a space of quasi-periodic functions with the frequency
	base $\boldsymbol{\Omega}$. Similarly to the periodic case (cf. section
	\eqref{subsec:Picard_periodic}), we restrict the iteration to a $\delta-$ball
	$C_{\delta}^{\mathbf{z}_{0}}\left(\boldsymbol{\Omega}\right)$ centered
	at the inital guess $\mathbf{z}_{0}$ with radius $\delta$, i.e., 
	
	\begin{equation}
	C_{\delta}^{\mathbf{z}_{0}}\left(\boldsymbol{\Omega}\right):=\left\{ \mathbf{z}(\boldsymbol{\theta}):\:\mathbb{T}^{k}\to\mathbb{R}^{2n}\;\vert\quad\mathbf{z}\in C^{0},\quad\left\Vert \mathbf{z}-\mathbf{z}_{0}\right\Vert _{0}\leq\delta\right\} ,\label{eq:Space_qper_funct}
	\end{equation}
	where the sup norm $\left\Vert \cdot\right\Vert _{0}=\max_{\boldsymbol{\theta}\in\mathbb{T}^k}\left|\cdot\right|$
	is the uniform supremum norm over the torus $ \mathbb{T}^k $. We then have the following theorem.
	\begin{thm}
		\label{Thm:Picard_it_qper} If the conditions 
		\begin{eqnarray}
		L_{\delta}^{\mathbf{z}_{0}} & < & \frac{1}{a\left\Vert \mathbf{V}\right\Vert \left\Vert \mathbf{V}^{-1}\right\Vert h_{max}}\,,\label{eq:Lipschitz_cond_qper}\\
		\delta & \geq & \frac{\left\Vert \boldsymbol{\mathcal{E}}\right\Vert _{0}}{1-2\left\Vert \mathbf{V}\right\Vert \left\Vert \mathbf{V}^{-1}\right\Vert L_{\delta}^{\mathbf{z}_{0}}h_{max}}\,,\label{eq:delta_cond_qper}
		\end{eqnarray}
		hold for some real number $a\geq1$, then the mapping $\boldsymbol{\mathcal{G}}_{Q}$
		defined in equation \eqref{eq:Map_quasiper} has a unique fixed point
		in the space \eqref{eq:Space_qper_funct} and this fixed point can
		be found via the successive approximation
		\begin{equation}
		\mathbf{z}_{\ell+1}(t)=\boldsymbol{\mathcal{G}}_{Q}(\mathbf{z}_{\ell})\,(t),\quad \ell\in\mathbb{N}.\label{eq:Picard_it_qper}
		\end{equation}
	\end{thm}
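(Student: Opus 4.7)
The plan is to invoke the Banach fixed point theorem for the map $\boldsymbol{\mathcal{G}}_Q$ restricted to the closed ball $C_\delta^{\mathbf{z}_0}(\boldsymbol{\Omega})$, paralleling the argument for Theorem~\ref{Thm:Picard_it_periodic}. The work splits into three steps: (i) verify that $C_\delta^{\mathbf{z}_0}(\boldsymbol{\Omega})$ endowed with $\|\cdot\|_0$ is a complete metric space; (ii) show that $\boldsymbol{\mathcal{G}}_Q$ preserves this ball; and (iii) show that $\boldsymbol{\mathcal{G}}_Q$ is a strict contraction on it. Step (i) is classical: the uniform limit of continuous functions on $\mathbb{T}^k$ is again continuous, so $C_\delta^{\mathbf{z}_0}(\boldsymbol{\Omega})$ is a closed subset of the Banach space of continuous torus functions and is therefore complete.

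For step (ii), I would first note that whenever $\mathbf{z}\in C_\delta^{\mathbf{z}_0}(\boldsymbol{\Omega})$, the composition $\mathbf{R}(\mathbf{z}(\cdot))$ is continuous and quasi-periodic with frequency basis $\boldsymbol{\Omega}$ by Lipschitz continuity of $\mathbf{R}$. Lemma~\ref{Lemma:QP_linear}, applied with forcing $\mathbf{F}-\mathbf{R}(\mathbf{z}(\cdot))$, then guarantees that $\boldsymbol{\mathcal{G}}_Q(\mathbf{z})$ is quasi-periodic with the same basis. Invariance of the $\delta$-ball then follows from the triangle inequality
\[
\|\boldsymbol{\mathcal{G}}_Q(\mathbf{z}) - \mathbf{z}_0\|_0 \leq \|\boldsymbol{\mathcal{G}}_Q(\mathbf{z}) - \boldsymbol{\mathcal{G}}_Q(\mathbf{z}_0)\|_0 + \|\boldsymbol{\mathcal{G}}_Q(\mathbf{z}_0) - \mathbf{z}_0\|_0,
\]
combined with the contraction bound from step (iii) applied to the first term and the definition of $\boldsymbol{\mathcal{E}}$ controlling the second; condition~\eqref{eq:delta_cond_qper} is precisely the algebraic requirement that this sum be bounded above by $\delta$.

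For step (iii), the key observation is that $\boldsymbol{\mathcal{G}}_Q(\mathbf{z}_1)(t)-\boldsymbol{\mathcal{G}}_Q(\mathbf{z}_2)(t)$ is the unique quasi-periodic steady-state response of the linear system~\eqref{eq:firstorderlinear} to the quasi-periodic forcing $\mathbf{R}(\mathbf{z}_2(\cdot))-\mathbf{R}(\mathbf{z}_1(\cdot))$. Passing the Lipschitz bound under the mean-value integral~\eqref{eq:Rkappa_def} gives $|\mathbf{R}_{\boldsymbol{\kappa}}\{\mathbf{z}_1\}-\mathbf{R}_{\boldsymbol{\kappa}}\{\mathbf{z}_2\}|\leq L_\delta^{\mathbf{z}_0}\|\mathbf{z}_1-\mathbf{z}_2\|_0$ uniformly in $\boldsymbol{\kappa}$, and the uniform bound~\eqref{eq:h_max} on the amplification factors $\mathbf{H}(T_{\boldsymbol{\kappa}})$ controls the linear response operator coefficient by coefficient. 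Condition~\eqref{eq:Lipschitz_cond_qper} then yields a contraction constant strictly smaller than $1/a\leq 1$, whereupon Banach's theorem produces the unique fixed point as the limit of the Picard iterates~\eqref{eq:Picard_it_qper}.

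The main obstacle is converting the coefficient-wise amplification estimate into a genuine uniform-in-$t$ sup-norm bound on the Fourier series defining $\boldsymbol{\mathcal{G}}_Q$, since the Fourier series of a continuous almost-periodic function need not converge in sup norm. I would handle this by moving to the torus representation $\mathbf{z}(t)=\mathbf{u}(\boldsymbol{\Omega}t)$ of Remark~\ref{rem:qp_fourier}, in which $\boldsymbol{\mathcal{G}}_Q$ lifts to a convolution-type operator on $C(\mathbb{T}^k)$ whose sup-norm operator bound can be established via a Bochner--Fej\'er truncation argument; the factor of two appearing in~\eqref{eq:delta_cond_qper}, absent in the analogous periodic condition~\eqref{eq:delta_cond}, is the price paid for this passage.
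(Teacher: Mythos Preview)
Your plan is structurally identical to the paper's: Banach fixed-point theorem on the ball $C_\delta^{\mathbf{z}_0}(\boldsymbol{\Omega})$, with the same three ingredients (completeness, self-mapping, contraction) assembled in the same order. The one substantive difference is your treatment of what you call the ``main obstacle.'' The paper does not invoke any Bochner--Fej\'er or convolution-kernel argument; it simply pulls the uniform bound $h_{max}$ outside the Fourier sum and then re-identifies $\sum_{\boldsymbol{\kappa}}\bigl[\mathbf{R}_{\boldsymbol{\kappa}}\{\mathbf{z}_0\}-\mathbf{R}_{\boldsymbol{\kappa}}\{\mathbf{z}\}\bigr]e^{i\langle\boldsymbol{\kappa},\boldsymbol{\Omega}\rangle t}$ with $\mathbf{R}(\mathbf{z}_0(\cdot))-\mathbf{R}(\mathbf{z}(\cdot))$ in sup norm, appealing only to Lipschitz continuity for the convergence of the Fourier series. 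Your concern that a pointwise multiplier bound does not automatically transfer to a sup-norm operator bound is legitimate, and your proposed torus-lift remedy is the more careful route; the paper's argument glosses over this step. Your reading of the factor of two in \eqref{eq:delta_cond_qper} as the cost of that passage is, however, not what the paper intends: the same factor of two appears in the paper's contraction estimate for the \emph{periodic} case (Appendix~\ref{App:Proof_Picard_it_per}), and in the quasi-periodic self-map estimate the paper obtains no factor of two at all, so its presence in \eqref{eq:delta_cond_qper} versus \eqref{eq:delta_cond} reflects bookkeeping rather than an analytic price.
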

	\begin{proof}
		The is analogous to the proof of Theorem \ref{Thm:Picard_it_periodic}.
		We first establish that the mapping \eqref{eq:Map_quasiper} is well-defined in the space \eqref{eq:Space_qper_funct}. In Appendix \ref{App:Proof_Picard_it_qper},
		we detail that the mapping \eqref{eq:Map_quasiper} is a contraction
		under the conditions \eqref{eq:Lipschitz_cond_qper} and \eqref{eq:delta_cond_qper}
		. 
	\end{proof}
	\begin{rem}
		{\small In case of geometric (position-dependent) nonlinearities and proportional
			damping, we can reduce the dimensionalilty of the iteration \eqref{eq:Picard_it_qper}
			by half, using \eqref{eq:int_explicit_qp_posdep}. This results in
			the following, equivalent Picard iteration:
			\begin{equation}
			\mathbf{x}_{\ell+1}(t)=\boldsymbol{\mathcal{L}}_{Q}(\mathbf{x}_{\ell})\,(t):=\mathbf{U}\sum_{\boldsymbol{\kappa}\in\mathbb{Z}^{k}}\mathbf{Q}(T_{\boldsymbol{\kappa}})\mathbf{U}^{\top}\left[\mathbf{f}_{\boldsymbol{\kappa}}-\mathbf{S}_{\boldsymbol{\kappa}}\{\mathbf{x}_{\ell}\}\right]e^{i\left\langle \boldsymbol{\kappa},\boldsymbol{\Omega}\right\rangle t}\,,\quad \ell\in\mathbb{N}.\label{eq:Picard_qper_pos_only}
			\end{equation}
			The existence of the steady state solution and the convergence of
			the iteration \eqref{eq:Picard_qper_pos_only} can be proven analogously. }
	\end{rem}
	As in the periodic case, the convergence of the iteration \eqref{eq:Picard_it_qper}
	depends on the quality of the initial guess and the constant $h_{max}$
	(cf. eq. \eqref{eq:h_max}), which is the maximum amplification factor.
	Low damping results in a higher amplification factor (cf. eq. \eqref{eq:h_max})
	and will therefore affect the iteration negatively, which is similar
	to the criterion derived in the periodic case. 
	
	\subsubsection{Unforced conservative case}
	
	In the unforced conservative case, $\mathbf{z}(t)\equiv\mathbf{0}$ is
	the trivial fixed point of the maps $\boldsymbol{\mathcal{G}}_{P}$
	and $\boldsymbol{\mathcal{G}}_{Q}$. Thus, by Theorems \ref{Thm:Picard_it_periodic}~and~\ref{Thm:Picard_it_qper}, the Picard iteration with an initial guess
	in the vicinity of the origin would make the iteration converge to
	the trivial fixed point. In practice, the simple Picard approach is
	found to be highly sensitive to the choice of initial guess for obtaining
	non-trivial solution in the case of unforced conservative systems.
	Thus, in such cases, more advanced iterative schemes equipped with
	continuation algorithms are desirable, such as the ones we describe
	next. 
	
	\subsection{Newton\textendash Raphson Iteration\label{sec:Newton-Iteration}}
	
	So far, we have described a fast iteration process and gave bounds
	on the expected convergence region of this iteration. We concluded
	that if the iteration converges, it leads to the unique \linebreak(quasi-) periodic
	solution of the system \eqref{eq:0}. As discussed previously (cf.
	Section~\ref{sec:Piccard}), our convergence criteria for the Picard iteration will not be satisfied for near-resonant forcing and low damping. However, even if the Picard iteration fails to converge, one or more periodic orbits may still exist. 
	
	A common alternative to the contraction mapping approach proposed
	above is the Newton\textendash Raphson scheme (cf., e.g., Kelley \cite{Newton}
	). An advantage of this iteration is its quadratic convergence if
	the initial guess is close enough to the actual solution of the problem.
	This makes this procedure also appealing for a continuation set-up.
	We first derive the Newton\textendash Raphson scheme to periodically
	forced systems and afterwards to quasi-periodically forced systems. 
	
	\subsubsection{Periodic case}
	
	To set up a Newton\textendash Raphson iteration, we reformulate the
	fixed-point problem \eqref{eq:IE_periodic} with the help of a functional
	$\boldsymbol{\mathcal{F}}_{P}$ whose zeros need to be determined:
	
	\begin{equation}
	\boldsymbol{\mathcal{F}}_{P}(\mathbf{z}):=\mathbf{z}-\boldsymbol{\mathcal{G}}_{P}(\mathbf{z})=\mathbf{z}-\int_{0}^{T}\mathbf{V}\mathbf{G}(t-s,T)\mathbf{V}^{-1}\left[\mathbf{F}(s)-\mathbf{R}(\mathbf{z}(s))\right]\,ds=\mathbf{0}.\label{eq:functional_periodic}
	\end{equation}
	Starting with an initial solution guess $\mathbf{z}_{0}$, we formulate
	the iteration for the zero of the functional $\boldsymbol{\mathcal{F}}_{P}$
	as 
	
	\begin{equation}
	\begin{split}\mathbf{z}_{l+1} & =\mathbf{z}_{l}+\boldsymbol{\mu}_{l},\\
	-\boldsymbol{\mathcal{F}}_{P}(\mathbf{z}_{l}) & =D\boldsymbol{\mathcal{F}}_{P}(\mathbf{z}_{l})\boldsymbol{\mu}_{l},\quad l\in\mathbb{N}\,,
	\end{split}
	\label{eq:NR_periodic}
	\end{equation}
	where the second equation in \eqref{eq:NR_periodic} can be written
	using the Gateaux derivative of $ \boldsymbol{\mathcal{F}}_{P} $ of
	\begin{equation}
	-\boldsymbol{\mathcal{F}}_{P}(\mathbf{z}_{l})=D\boldsymbol{\mathcal{F}}_{P}(\mathbf{z}_{l})\boldsymbol{\mu}_{l}=\left.\frac{d\mathcal{\boldsymbol{\mathcal{F}}}_{P}(\mathbf{z}_{l}+s\boldsymbol{\mu}_{l})}{ds}\right|_{s=0}=\boldsymbol{\mu}_{l}+\int_{0}^{T}\mathbf{V}\mathbf{G}(t-s,T)\mathbf{V}^{-1}\left.D\mathbf{R}\left(\mathbf{z}(s)\right)\right|_{\mathbf{z}=\mathbf{z}_{l}}\boldsymbol{\mu}_{l}(s)ds~.\label{eq:to_be_inverted}
	\end{equation}
	Equation \eqref{eq:to_be_inverted} is a linear integral equation
	in $\boldsymbol{\mu}_{l}$, where $\mathbf{z}_{l}$ is the known approximation to the solution of \eqref{eq:functional_periodic} at the $l^{\text{th}}$ iteration step. 
	
	\subsubsection{Quasi-periodic case\label{subsec:NR_qper}}
	
	In the quasi-periodic case, the steady-state solution of system \eqref{eq:0}
	is given by the zeros of the functional 
	\begin{equation}
	\mathcal{\boldsymbol{\mathcal{F}}}_{Q}(\mathbf{z}):=\mathbf{z}-\boldsymbol{\mathcal{G}}_{Q}(\mathbf{z})=\mathbf{z}-\sum_{\boldsymbol{\kappa}\in\mathbb{Z}^{k}}\mathbf{V}\mathbf{H}(T_{\boldsymbol{\kappa}})\mathbf{V}^{-1}\left(\mathbf{F_{\boldsymbol{\kappa}}}-\mathbf{R_{\boldsymbol{\kappa}}}\{\mathbf{z}_{l}\}\right)e^{i\left\langle \boldsymbol{\kappa},\boldsymbol{\Omega}\right\rangle t}\,.\label{eq:Functional_quasiperiodic}
	\end{equation}
	Analogous to the periodic case, the Newton\textendash Raphson scheme
	seeks to find a zero of $\mathcal{\boldsymbol{\mathcal{F}}}_{Q}$ via the iteration:
	\begin{equation}
	\begin{split}\mathbf{z}_{l+1} & =\mathbf{z}_{l}+\mathbf{\boldsymbol{\nu}}_{l},\\
	-\boldsymbol{\mathcal{F}}_{Q}(\mathbf{z}_{l}) & =D\boldsymbol{\mathcal{F}}_{Q}(\mathbf{z}_{l})\mathbf{\boldsymbol{\nu}}_{l}.
	\end{split}
	\label{eq:NR_qper}
	\end{equation}
	To obtain the correction step $\mathbf{\boldsymbol{\nu}}_{l}$, the
	linear system of equations 
	\begin{equation}
	-\boldsymbol{\mathcal{F}}_{Q}(\mathbf{z}_{l})=D\boldsymbol{\mathcal{F}}_{Q}(\mathbf{z}_{l})\mathbf{\boldsymbol{\nu}}_{l}=\mathbf{\boldsymbol{\nu}}_{l}+\mathbf{V}\sum_{\boldsymbol{\kappa}\in\mathbb{Z}^{k}}\mathbf{H}(T_{\boldsymbol{\kappa}})\mathbf{V}^{-1}\left(\mathbf{F_{\boldsymbol{\kappa}}}-\{D\mathbf{R}(\mathbf{z}_{l})\mathbf{\boldsymbol{\nu}}_{l}\}_{\boldsymbol{\kappa}}\right)e^{i\left\langle \boldsymbol{\kappa},\boldsymbol{\Omega}\right\rangle t}\label{eq:to_be_inverted_quasiperiodic}
	\end{equation}
	needs to be solved for $\mathbf{\boldsymbol{\nu}}_{l}$. The Fourier
	coefficients $\{D\mathbf{R}(\mathbf{z}_{l})\mathbf{\boldsymbol{\nu}}_{l}\}_{\boldsymbol{\kappa}}$
	in \eqref{eq:to_be_inverted_quasiperiodic} are then given by the formula
	\[
	\{D\mathbf{R}(\mathbf{z}_{l}(t))\mathbf{\boldsymbol{\nu}}_{l}\}_{\boldsymbol{\kappa}}=\lim_{\tau\rightarrow\infty}\frac{1}{2\tau}\int_{-\tau}^{\tau}D\mathbf{R}(\mathbf{z}(t))\boldsymbol{\nu}_{l}(t)e^{-i\langle\boldsymbol{\kappa},\boldsymbol{\Omega}\rangle t}dt\,.
	\]
	\begin{rem}
		{\small
			As noted in the Introduction, the results in Sections \ref{sec:qp_picard} and \ref{subsec:NR_qper} hold without any restriction on the number of independent frequencies allowed in the forcing function $ \mathbf{f}(t) $. This enables us to compute the steady-state response for arbitrarily complicated forcing functions, as long as they are well approximated by a finite Fourier expansion. Thus, the treatment of random-like steady state computations is possible with the methods proposed here. 	}
	\end{rem}
	\subsection*{Discussion of the iteration techniques}
	The Newton--Raphson iteration offers an alternative to the Picard iteration, especially
	when the system is forced near resonance, and hence the convergence of
	the Picard iteration cannot be guaranteed. At the same time, the Newton\textendash Raphson
	iteration is computationally more expensive than the Picard iteration
	for two reasons: First, the evaluation of the Gateaux derivatives
	\eqref{eq:to_be_inverted} and \eqref{eq:to_be_inverted_quasiperiodic}
	can be expensive, especially if the Jacobian of the nonlinearity is
	not directly available. Second, the correction step $\boldsymbol{\mu}_{l}$
	or $\boldsymbol{\nu}_{l}$ at each iteration involves the inversion
	of the corresponding linear operator in equation \eqref{eq:to_be_inverted}
	or \eqref{eq:to_be_inverted_quasiperiodic}, which is costly for large
	systems.
	
	Regarding the first issue above, the tangent stiffness is often available
	in finite-element codes for structural vibration. Nonetheless, there
	are many quasi-Newton schemes in the literature that circumvent this
	issue by offering either a cost-effective but inaccurate approximation
	of the Jacobian (e.g., the Broyden's method \cite{Broyden}), or avoid
	the calculation of the Jacobian altogether (cf. Kelley~\cite{Newton}). 
	
	For the second challenge above, one can opt for the iterative solution
	of the linear system \eqref{eq:to_be_inverted}~or~\eqref{eq:to_be_inverted_quasiperiodic},
	which would circumvent operator inversion when the system size is
	very large. A practical strategy for obtaining force response curves of high-dimensional systems would be to use the Picard iteration away from resonant forcing and switch towards the Newton\textendash Raphson approach when the Picard iteration fails. Even though the Newton\textendash Raphson approach has better rate of convergence (quadratic) as compared to the Picard approach (linear), the computational complexity of a single Picard iteration is an order of magnitude lower than that of Newton--Raphson method (simply because of the additional costs of Jacobian evaluation and inversion involved in the correction step). In our experience with high-dimensional systems, when the Picard iteration converges, it is significantly faster than the Newton\textendash Raphson in terms of CPU time, even though it takes significantly more number of iterations to converge.
	
	\section{Numerical solution procedure\label{subsec:NR_int_disc}}
	
	Note that $\mathbf{z}(t)$ in eqs. \eqref{eq:IE_periodic} and \eqref{eq:IE_QP}
	is a continuous function of time that cannot generally be obtained
	in a closed form. It must therefore be approximated via the finite
	sum 
	\[
	\mathbf{z}_{m}(t)=\sum_{j=1}^{m}\mathbf{c}_{mj}b_{mj}(t),
	\]
	where $\mathbf{c}_{mj}$ is the unknown coefficient attached to the
	chosen basis function $b_{mj}(t)$. The basic idea of all related
	numerical methods is to project the solution onto a suitable finite-dimensional
	subspace to facilitate numerical computations. These projection methods
	can be broadly divided into the categories of collocation and Galerkin
	methods (cf. Kress \cite{Kress}). 
	
	If the basis functions $b_{mj}$ are chosen to perform a collocation-type
	approximation, the integral equation will be guaranteed to be satisfied
	at a finite number of \emph{collocation points}. Specifically, if
	$m$ collocation points $t_{1}^{(m)},t_{2}^{(m)}\dots,t_{m}^{(m)}\in[0,T]$
	are chosen, the integral equation is reduced to solving a finite-dimensional
	system of (nonlinear) algebraic equations in the coefficients $\mathbf{c}_{mj}$:
	\begin{equation}
	\mathbf{z}_{m}(t_{j})=\boldsymbol{\mathcal{G}}_{P}\left(\mathbf{z}_{m}\right)\,(t_{j}),\quad j=1,\dots,m.\label{eq:collocation}
	\end{equation}
	Equation \eqref{eq:collocation} needs to be solved for the unknown
	coefficients $\mathbf{c}_{mj}\in\mathbb{R}^{2n}$ for all $j\in\{1,\dots,m\}$.
	Note that under the non-resonance conditions \eqref{eq:nonresonance-1},
	the Green's function $\mathbf{G}$ is bounded in time. Then the collocation
	method with linear splines will converge to the exact solution for
	the linear integral equation at each iteration step (cf. Kress \cite{Kress},
	chapter 13). Furthermore, if the exact solution $\mathbf{z}$ is twice
	continuously differentiable in $t$, we also obtain an error estimate
	for the linear spline collocation approximate solution $\mathbf{z}_{m}$
	as 
	\[
	\|\mathbf{z}_{m}-\mathbf{z}\|_{\infty}\le M\|\ddot{\mathbf{z}}\|_{\infty}\Delta^{2},
	\]
	where $\Delta$ is the spacing between the uniform collocation points,
	$M$ is an order constant depending upon the Green's function $\mathbf{G}$.
	
	Alternatively, the Galerkin method can be chosen to approximate the
	solution of eqs. \eqref{eq:IE_periodic}~and~\eqref{eq:IE_QP} using
	Fourier basis with harmonics of the base frequencies, followed by
	a projection onto the same basis vectors, given by
	
	\begin{align}
	\int_{0}^{T_{j}}\mathbf{z}_{m}(s)b_{mj}(s)\,ds & =\int_{0}^{T_{j}}\sum_{\boldsymbol{\kappa}\in\mathbb{Z}^{k}}\mathbf{V}\mathbf{H}(T_{\boldsymbol{\kappa}})\mathbf{V}^{-1}\left(\mathbf{F_{\boldsymbol{\kappa}}}-\mathbf{R_{\boldsymbol{\kappa}}}\{\mathbf{z}_{m}\}\right)e^{i\left\langle \boldsymbol{\boldsymbol{\kappa}},\boldsymbol{\Omega}\right\rangle s}b_{mj}(s)\,ds\,,\quad j=1,\dots,m,\label{eq:Fourierbasis}
	\end{align}
	where $b_{mj}(t)=e^{i\left\langle \boldsymbol{\boldsymbol{\kappa}}_{j},\boldsymbol{\Omega}\right\rangle t}$
	are the Fourier basis functions, with the corresponding time-periods~${T_{j}=\frac{2\pi}{\left\langle \boldsymbol{\boldsymbol{\kappa}}_{j},\boldsymbol{\Omega}\right\rangle }}$.
	Due to the orthogonality of these basis function, system \eqref{eq:Fourierbasis}
	simplifies to 
	\[
	\mathbf{c}_{mj}=\sum_{\boldsymbol{\kappa}\in\mathbb{Z}^{k}}\mathbf{V}\mathbf{H}(T_{\boldsymbol{\kappa}})\mathbf{V}^{-1}\left(\mathbf{F_{\boldsymbol{\kappa}}}-\mathbf{R_{\boldsymbol{\kappa}}}\{\mathbf{z}_{m}\}\right),\quad j=1,\dots,m
	\]
	which is a system of nonlinear equations for the unknown coefficients
	$\mathbf{c}_{mj}$.
	
	In the frequency domain, this system of coefficient equations are
	the same as that obtained from the multi-frequency harmonic balance
	method after finite truncation (see e.g. Chua and Ushida~\cite{Chua_harm_bal} or
	Lau~and~Cheung~\cite{Lau_QuasiP}). Our explicit formulas in~\eqref{eq:ampl_fkt_posdep},
	however, should
	speed up the calculations relative to a general application of the
	harmonic balance method. The same scheme applies in the periodic case.
	For both cases (periodic and quasi-periodic) the error due to the
	omission of higher harmonics in harmonic balance procedure is not
	well understood (see our review of the available results on the periodic
	case in the Introduction ). For the quasi-periodic case, no such error
	bounds are known to the best of our knowledge. 
	
	Equations \eqref{eq:IE_periodic} and \eqref{eq:IE_QP} are Fredholm
	integral equations of the second\emph{ }kind (cf. Zemyan~\cite{Zemyan}).
	Fortunately, the theory and solution methods for integral equations
	of the second kind are considerably simpler than for those of the first
	kind. We refer to Atkinson~\cite{Atkinson} for an exhaustive treatment of
	numerical methods for such integral equations. Our supplementary MATLAB\textsuperscript{\tiny\textregistered} code
	provides the implementation details of a simple yet powerful collocation-based
	approach for the periodic case, and a Galerkin projection using a
	Fourier basis for the quasi-periodic case in the frequency domain. 
	
	\subsection{Numerical continuation\label{subsec:Numerical-continuation}}
	
	Both the Picard and the Newton\textendash Raphson iteration are efficient
	solution techniques for specific forcing functions. However,
	to obtain the steady-state response as a function of forcing amplitudes
	and frequencies, numerical continuation is required. 
	
	A simple approach is to use \emph{sequential continuation}, in which
	the frequency (or time period) of oscillation is treated as a continuation
	parameter (multi-parameter in case of quasi-periodic oscillations).
	This parameter is varied in small steps and the corresponding (quasi) periodic
	response is iteratively computed at each step. The solution in any
	given step is typically used as initial guess for the next step. Such
	an approach will generally fail near a fold bifurcation with respect to one of the
	base frequencies $\boldsymbol{\Omega}$. In such cases, more advanced
	continuation schemes such as the \emph{pseudo arc-length continuation}
	are required.
	
	\paragraph{The pseudo-arc length approach.}
	
	We briefly explain the steps involved in numerical continuation for
	calculation of periodic orbits using the pseudo-arc-length approach,
	which is commonly used to track folds in a single-parameter solution
	branch. We seek the frequency response curve of system \eqref{eq:eqm_posdep}
	as a family of solutions to 
	\begin{equation}
	\boldsymbol{\mathcal{N}}(\mathbf{x},T):=\mathbf{x}-\int_{0}^{T}\mathbf{U}\mathbf{L}\left(t-s;T(p)\right)\mathbf{U}^{\top}\left(\mathbf{f}(s)-\mathbf{S}(\mathbf{x}(s))\right)\,ds=\mathbf{0},\label{eq:Continuation}
	\end{equation}
	with the fictitious variable $p$ parameterizing the solution curve
	$(\mathbf{x},T)$ of equation \eqref{eq:Continuation}. If $(\mathbf{x},T)$
	is a regular solution for equation \eqref{eq:Continuation}, then
	the implicit function theorem guarantees the existence of a nearby
	unique solution. The tangent vector $\mathbf{t}$ to the solution
	curve at $(\mathbf{x},T)$ is given by the null-space of the Jacobian
	$\left.D\boldsymbol{\mathcal{N}}\right|_{(\mathbf{x},T)}$, i.e., $\mathbf{t}$
	can be obtained by solving the system of equations 
	\begin{align*}
	\left.D\boldsymbol{\mathcal{N}}\right|_{(\mathbf{x},T)}\mathbf{t} & =\mathbf{0},\\
	\left\langle \mathbf{t},\mathbf{t}\right\rangle  & =1,
	\end{align*}
	where the second equation specifies a unity constraint on the length
	of the tangent vector to uniquely identify $\mathbf{t}$. This tangent
	vector gives a direction on the solution curve along which $p$ increases.
	Starting with a known solution $\mathbf{u}_{0}:=(\mathbf{x}_{0},T_{0})$
	on the solution branch with the corresponding tangent direction vector
	$\mathbf{t}_{0}$ and a prescribed arc-length step size $\Delta p$,
	we obtain a nearby solution $\mathbf{u}$ by solving the nonlinear
	system of equations
	\begin{align*}
	\boldsymbol{\mathcal{N}}\left(\mathbf{u}\right) & =\mathbf{0},\\
	\left\langle \mathbf{u}-\mathbf{u}_{0},\mathbf{t}_{0}\right\rangle  & =\Delta p.
	\end{align*}
	This system can again be solved iteratively using, e.g., the Newton\textendash Raphson
	procedure, with the Jacobian given by $[D\boldsymbol{\mathcal{N}},\,\mathbf{t}_{0}]^{\top}$.
	We need the derivative of $\boldsymbol{\mathcal{N}}(\mathbf{x},T)$
	with respect to $T$ to evaluate this Jacobian, which can also be
	explicitly computed using the derivative of the Green's function $\mathbf{L}$
	or $\mathbf{G}$ with respect to $T$. These expressions are detailed in the
	Appendix~\ref{app:derivative} and implemented in the supplementary MATLAB\textsuperscript{\tiny\textregistered} code.
	
	Although this pseudo-arc length continuation is able to capture folds
	in single-parameter solution branches, it is not the state-of-the-art
	continuation algorithm. More advanced continuation schemes, such as
	the atlas algorithm of $\textsc{coco}$ \cite{CoCo} enable continuation
	with multi-dimensional parameters required for quasi-periodic forcing. In this work, we have implemented our integral
	equations approach with the MATLAB\textsuperscript{\tiny\textregistered}-based continuation package $\textsc{coco}$
	\cite{CoCo} . 
	
	\subsubsection{Continuation of periodic orbits in the conservative autonomous setting}
	
	\label{subsec:UnforcedAutoCont}
	
	As remarked earlier, conservative autonomous systems have internally parameterized
	periodic orbits a priori unknown period. Moreover,
	each (quasi-) periodic orbit of such a system is part of a family
	of (quasi-) periodic trajectories with the solutions differing only
	in their phases.  A unique solution is obtained using a \emph{phase
		condition, }a scalar equation of the form 
	\begin{equation}
	\mathfrak{p}(\mathbf{z}(t))=0,\label{eq:phase_cond}
	\end{equation}
	which fixes a Poincare section transverse to the (quasi-)
	periodic orbit in the phase space. The choice of this section is arbitrary
	but often involves setting the initial velocity of one of the degrees
	of freedom to be zero, i.e., letting
	\[
	\mathfrak{p}(\mathbf{z})=\dot{x}_{k}(0)=0,
	\]
	asserting that the initial velocity at the $k^{\text{th}}$ degree
	of freedom is zero. As the solution time-period $T$ is unknown,
	eqs. \eqref{eq:Continuation}-\eqref{eq:phase_cond}
	have an equal number of
	equations and variables $(\mathbf{x},T)$. Thus, there is no parameter among the variables, which can
	be used for the continuation of a given solution. 
	
	In literature, this issue is avoided by introducing a fictitious damping
	parameter, say $d$, and establishing the existence of a periodic
	orbit if and only if $d=0$ , i.e., in the conservative limit (Muñoz-Almaraz et al. \cite{ConservativeSystems_continuation}). With this trick, we reformulate the integral equation~\eqref{eq:Continuation} (with forcing $\mathbf{f}(t)\equiv\mathbf{0}$
	) to find periodic orbits of the system 
	\begin{equation}
	\mathbf{M}\ddot{\mathbf{x}}+d\mathbf{K}\dot{\mathbf{x}}+\mathbf{K}\mathbf{x}+\mathbf{S}(\mathbf{x})=\mathbf{0}.\label{eq:Sys_artDamp}
	\end{equation}
	Periodic solutions to this system are created through a Hopf bifurcation
	that occurs precisely at the conservative limit of system \eqref{eq:Sys_artDamp}
	($d=0$). It can be shown that the damping parameter maintains a zero
	value along the periodic-solution-branch obtained at the bifurcation
	point. Advanced continuation algorithms can then be used to detect such a
	Hopf bifurcation and to make a switch to the periodic solution branch
	(cf. Galán-Vioque and Vanderbauwhede~\cite{Galan}). 
	
	A similar continuation procedure can be carried out in our integral-equation
	approach. The periodic solution, however, technically does not arise
	from a Hopf bifurcation, because the non-resonance conditions \eqref{eq:nonresonance-1}
	and \eqref{eq:nonresonance-1-1} are violated at the Hopf bifurcation
	point. Nonetheless, continuation of a given solution point on the
	solution branch is possible using standard continuation algorithms.
	In this conservative autonomous setting, a given solution of system
	\eqref{eq:Continuation}, \eqref{eq:phase_cond} may again be continued
	in the $(\mathbf{x},T,d)$ space using, e.g., the pseudo arc-length
	approach.
	
	\section{Numerical examples\label{sec:Numerical-Examples}}
	
	To illustrate the power of our integral-equation-based approach in locating
	the steady-state response, we consider two numerical examples. The
	first one is a two-degree-of-freedom system with geometric nonlinearity.
	We apply our algorithms under periodic and quasi-periodic forcing, as
	well as to the autonomous system with no external forcing. We also treat
	a case of non-smooth nonlinearities. For periodic forcing, we compare
	the computational cost  with algorithm implemented in the $\texttt{po}$~toolbox of the state-of-the-art continuation software $\textsc{coco}$
	\cite{CoCo}. Subsequently, we perform similar computations for a
	higher-dimensional mechanical system. 
	
	\subsection{2-DOF Example}
	
	\begin{figure}[H]
		\centering{}\includegraphics{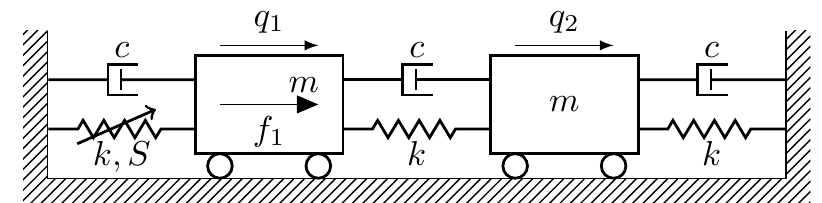}\caption{\label{fig:SP_example}Two-mass oscillator with the non-dimensional
			parameters $m$, $k$ and $c$. }
	\end{figure}
	
	We consider a two-degree-of-freedom oscillator shown in Figure \ref{fig:SP_example}.
	The nonlinearity $\mathbf{S}$ is confined to the first spring and
	depends on the displacement of the first mass only. The equations
	of motion are
	\begin{equation}
	\left[\begin{array}{cc}
	m & 0\\
	0 & m
	\end{array}\right]\ddot{\mathbf{q}}+\left[\begin{array}{cc}
	2c & -c\\
	-c & 2c
	\end{array}\right]\dot{\mathbf{q}}+\left[\begin{array}{cc}
	2k & -k\\
	-k & 2k
	\end{array}\right]\mathbf{q}+\left[\begin{array}{c}
	S(q_{1})\\
	0
	\end{array}\right]=\left[\begin{array}{c}
	f_{1}(t)\\
	f_2(t)
	\end{array}\right]\,.\label{eq:2Dof_sys}
	\end{equation}
	This system is a generalization of the two-degree-of-freedom
	oscillator studied by Szalai~et~al.~\cite{Szalai_SSM}, which is a slight modification of the classic example of Shaw~and~Pierre~\cite{ShawPierre}. Since the damping matrix $\mathbf{C}$
	is proportional to the stiffness matrix $\mathbf{K}$, we can employ
	the Green's function approach described in Section \ref{subsec:Proportional_damping}.
	The eigenfrequencies and modal damping of the linearized system at
	$q_{1}=q_{2}=0$ are given by 
	
	\[
	\begin{split}\omega_{0,1}=\sqrt{\frac{k}{m}},\qquad\omega_{0,2}=\sqrt{\frac{3k}{m}}, & \qquad \zeta_{1}=\frac{cm}{2\omega_{0,1}},\qquad \zeta_{2}=\frac{cm}{2\omega_{0,2}}.\end{split}
	\]
	With those constants, we can calculate the constants $\alpha_{j}$
	and $\omega_{j}$ (cf. eq. \eqref{eq:alpha_beta}) for the Green's
	function $L_{j}$ in eq. \eqref{eq:G_pos}. We will consider three different
	versions of system \eqref{eq:2Dof_sys}: smooth nonlinearity with
	periodic and subsequently quasi-periodic forcing; smooth nonlinearity
	without forcing; discontinuous nonlinearity without forcing. 
	
	\subsubsection{Periodic forcing\label{subsec:EX_2Dof_periodic} }
	
	First, we consider system \eqref{eq:2Dof_sys} with harmonic
	forcing of the form
	\begin{equation}
	f_{1}=f_2(t)=A\sin(\Omega t),\label{eq:singlharm_forc}
	\end{equation}
	and a smooth nonlinearity of the form
	
	\begin{equation}
	S(q_{1})=0.5q_{1}^{3}\,,\label{eq:cubic_NL}
	\end{equation}
	which is the same nonlinearity considered by Szalai~et~al.~\cite{Szalai_SSM}.
	The integral-equation-based steady-state response curves are shown
	in Figure \ref{fig:SP_BB-3} for a full frequency sweep and for different
	forcing amplitudes. As expected, our Picard iteration scheme (red)
	converges fast for all frequencies in case of low forcing amplitudes.
	For higher forcing amplitude, the method no longer converges in a
	growing neighborhood of the resonance. To improve the results close to the resonance, we employ the Newton\textendash Raphson scheme of Section~\ref{sec:Newton-Iteration}. We see that the latter iteration captures the periodic response even for larger amplitudes near resonances until a fold arises in the response
	curve. We need more sophisticated continuation algorithms to
	capture the response around such folds. 
	
	\begin{figure}[H]
		\begin{centering}
			\includegraphics[width=0.7\textwidth]{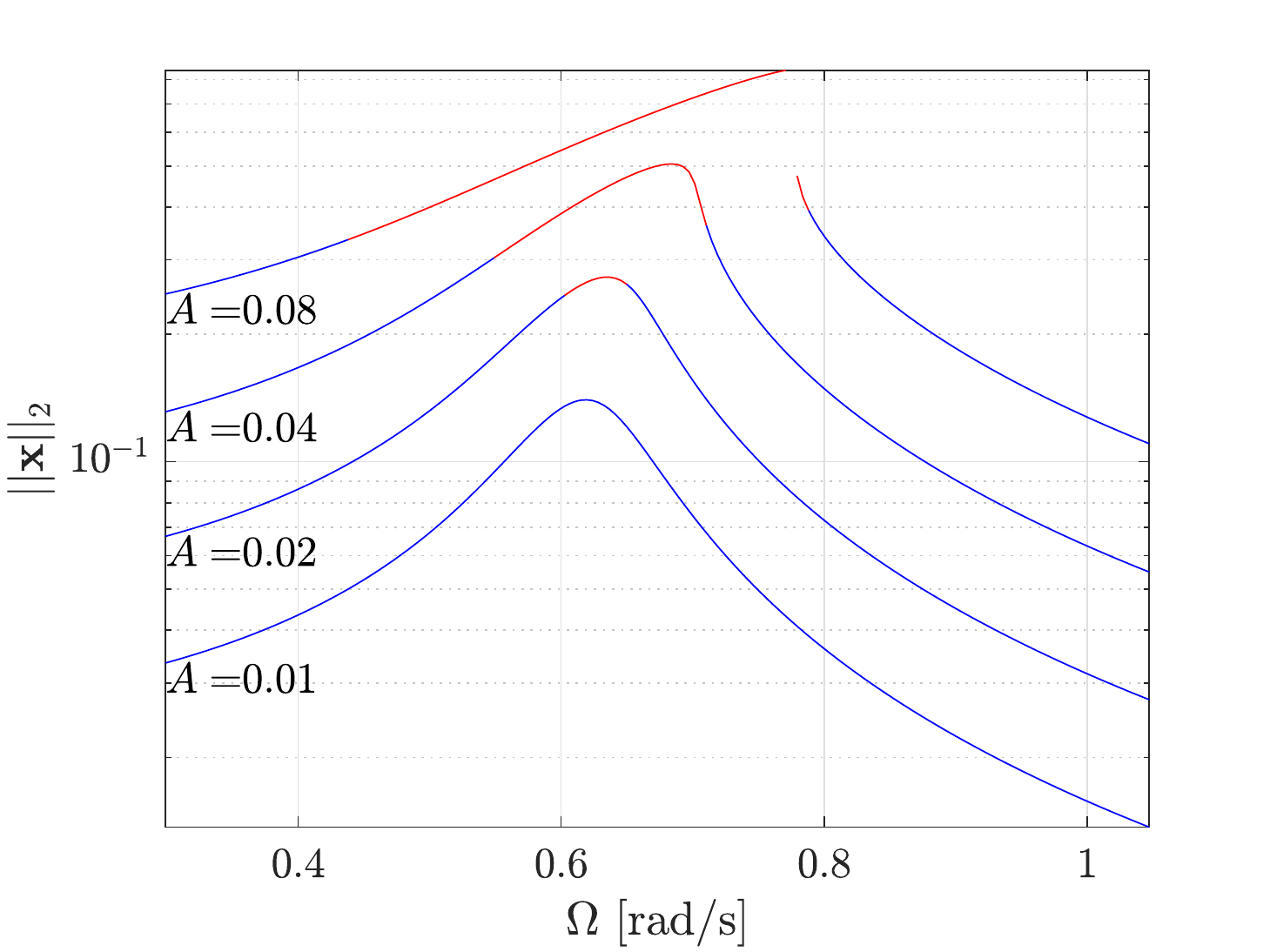}
			\par\end{centering}
		\caption{\label{fig:SP_BB-3} Example 1 with the nonlinearity \eqref{eq:cubic_NL}
			and the forcing \eqref{eq:singlharm_forc}, and the non-dimensional
			parameters $m=1$, $k=1$ and $c=0.3$: We use sequential continuation with the Picard iteration (blue) to capture the nonlinear frequency response curves for different amplitudes of loading. For relatively high amplitudes and near-resonant loading, the Picard approach fails to converge as expected. We then switch to the Newton\textendash Raphson method (red). Indeed, using sequential continuation, the Newton--Raphson approach is unable to capture the fold developing in the forced response for high amplitudes (cf.~Figure~\ref{fig:SP_Cont_Comp}).}
	\end{figure}
	
	\paragraph{Performance comparison between the integral equations and the \texttt{po} toolbox of}\textsc{coco}
	
	\begin{figure}[H]
		\begin{centering}
			\includegraphics[width=0.7\textwidth]{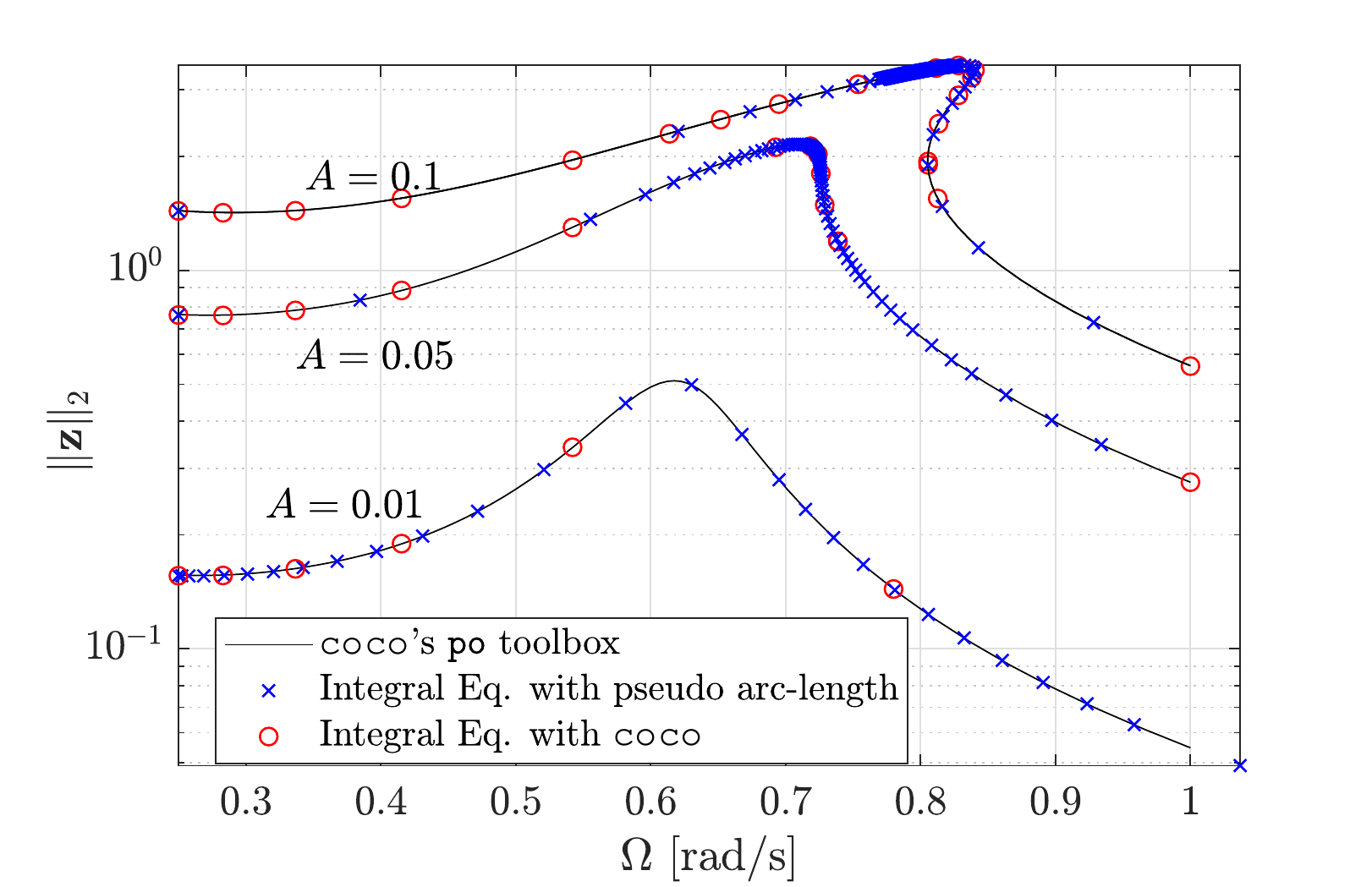}
			\par\end{centering}
		\caption{\label{fig:SP_Cont_Comp} Steady-state responses obtained for Example
			1 (with nonlinearity \eqref{eq:cubic_NL} and forcing \eqref{eq:singlharm_forc})
			from different continuation techniques. Numerical continuation using \textsc{coco} or pseudo-arc-length technique is able to capture the fold appearing in the response curve for $ A=1 $ (cf.~Figure~\ref{fig:SP_Cont_Comp}). Table~\ref{tab:CompContTimes} compares the computational performance of different methods on this example. }
	\end{figure}
	
	\begin{table}[H]
		\begin{centering}
			\begin{tabular}{|c|>{\centering}p{0.25\textwidth}|>{\centering}p{0.25\textwidth}|>{\centering}p{0.25\textwidth}|}
				\hline 
				\multirow{2}{*}{Forcing amplitude} & \multicolumn{3}{c|}{Computation time {[}seconds (\# continuation steps){]}}\tabularnewline
				\cline{2-4} 
				&  $\texttt{po}$-toolbox of \textsc{coco}  & Integral eq. with \emph{in-house} continuation & Integral eq. with \textsc{coco} continuation\tabularnewline
				\hline 
				\hline 
				0.01 & 16 (88 steps) & 0.15 (28 steps) & 2 (56 steps)\tabularnewline
				\hline 
				0.05 & 26 (124 steps) & 3.89 (700 steps) & 5 (110 steps)\tabularnewline
				\hline 
				0.1 & 32 (139 steps) & 298.73 (38537 steps) & 8 (160 steps)\tabularnewline
				\hline 
			\end{tabular}
			\par\end{centering}
		\caption{\label{tab:CompContTimes}Comparison of computational performance for different
			continuation approaches}
	\end{table}
	
	As shown in Table \ref{tab:CompContTimes}, the integral-equation
	approach proposed in the present paper is substantially faster
	than the $\texttt{po}$ toolbox for continuation of periodic orbits
	with the MATLAB\textsuperscript{\tiny\textregistered}-based continuation package $\textsc{coco}$ \cite{CoCo}
	for low enough amplitudes. However, as the frequency response starts
	developing complicated folds for higher amplitudes (cf. Figure \ref{fig:SP_Cont_Comp}),
	a much higher number of continuation steps are required for the convergence of our simple implementation of the pseudo-arc length continuation (cf. the third column in Table \ref{tab:CompContTimes}).
	Since $\textsc{coco}$ is capable of performing continuation on general
	problems with advanced algorithms, we have implemented our integral-equation
	approach in $\textsc{coco}$ in order to overcome this limitation.
	As shown in Table \ref{tab:CompContTimes}, the integral equation
	approach, along with $\textsc{coco}$'s built-in continuation scheme,
	is much more efficient for high-amplitude loading than any other
	method we have considered. 
	
	The integral-equation-based continuation was performed with $n_{t}=50$
	time steps to discretize the solution in the time domain. On the other
	hand, the $\texttt{po}$ toolbox in $\textsc{coco}$ performs collocation-based
	continuation of periodic orbits, whereby it is able to modify the
	time-step discretization in an adaptive manner to optimize performance.
	In principle, it is possible to build an integral-equation-based toolbox
	in $\textsc{coco}$, which would allow for the adaptive selection
	of the discretization steps. This is expected to further increase
	the performance of integral equations approach, when equipped with
	$\textsc{coco}$ for continuation.
	
	\subsubsection{Quasi-periodic forcing}
	
	Unlike the shooting technique reviewed earlier, our approach can also
	be applied to quasi-periodically forced systems (cf. Theorems~\ref{Theorem:IE_QP}~and~\ref{Thm:special_case_IE_QP}). Therefore, we can also choose
	a quasi-periodic forcing of the form 
	\begin{equation}
	f_2(t) =0,\quad f_{1}=0.01(\sin(\Omega_{1}t)+\sin(\Omega_{2}t))\,,\quad\kappa_{1}\Omega_{1}+\kappa_{2}\Omega_{2}\neq0\,,\quad\kappa_{1},\kappa_{2}\in\mathbb{Z}-\{0\},\label{eq:qper_forcing}
	\end{equation}
	in Example 2, with the nonlinearity still given by eq. \eqref{eq:cubic_NL}.
	Choosing the first forcing frequency $\Omega_{1}$ close to the first
	eigenfrequency $\omega_{1}$ and the second forcing frequency $\Omega_{2}$
	close to $\omega_{2}$, we obtain the results depicted in Figure \ref{fig:SP_qper}.
	We show the maximal displacement as a function of the two forcing
	frequencies, which are always selected to be incommensurate, otherwise
	the forcing would not be quasi-periodic. We nevertheless connect
	the resulting set of disrete points with a surface in Figure~\ref{fig:SP_qper} for better visibility. 
	
	\begin{figure}[h]
		\centering{}\includegraphics[scale=0.75]{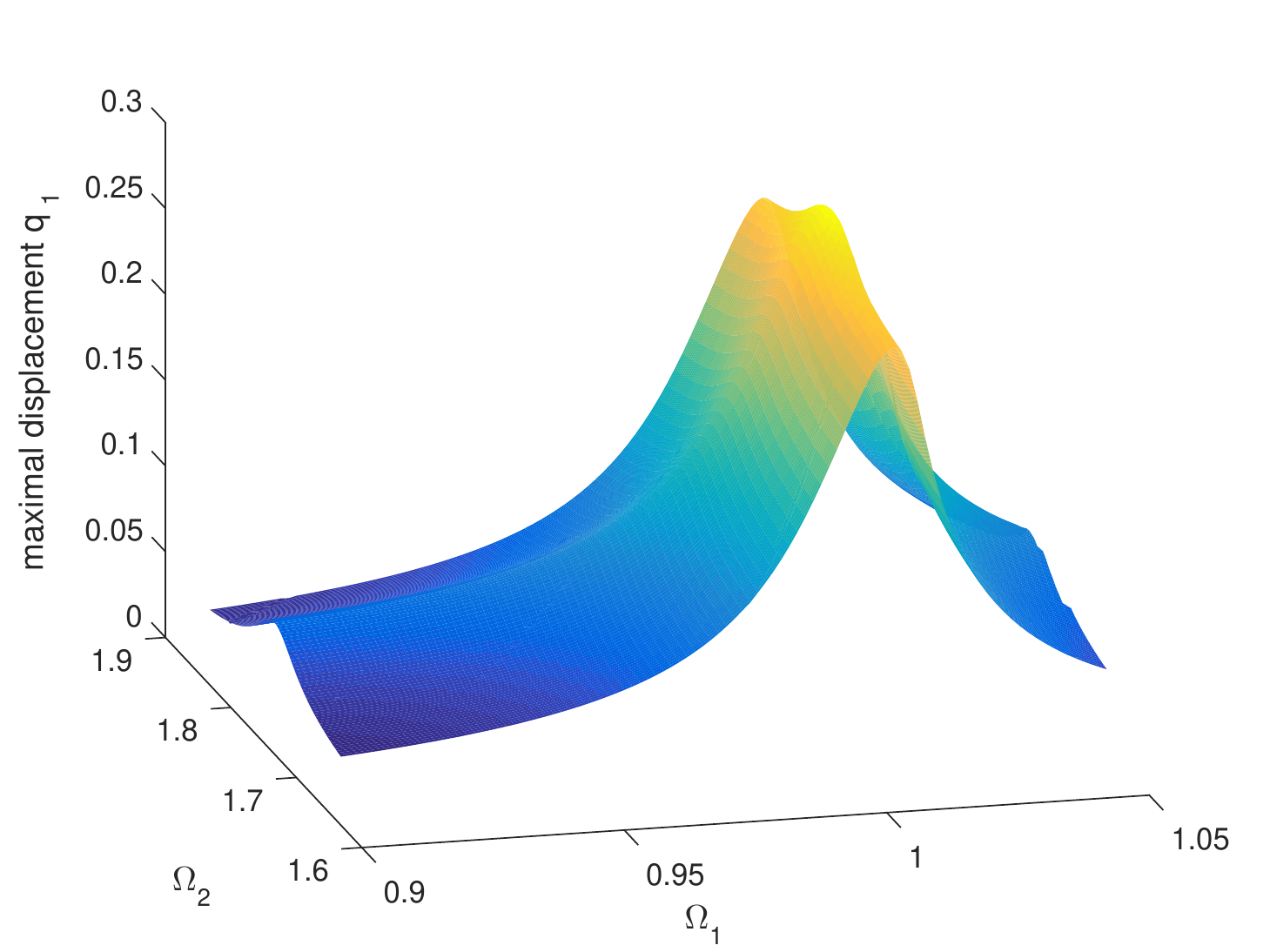}\caption{\label{fig:SP_qper}Response curve for Example 1 with the nonlinearity
			\eqref{eq:cubic_NL} and the forcing \eqref{eq:qper_forcing}, and the non-dimensional
			parameters $m=1$, $k=1$ and $c=0.02$.}
	\end{figure}
	
	To carry out the quasi-periodic Picard iteration~\eqref{eq:Picard_qper_pos_only},
	the infinite summation involved in the formula has to be truncated. We chose to truncate the Fourier expansion once
	its relative error is within $10^{-3}$. If the iteration \eqref{eq:Picard_qper_pos_only}
	did not converge, we switched to the Newton\textendash Raphson scheme
	described in Section~\ref{subsec:NR_qper}. In that case, we only
	kept the first three harmonics as Fourier basis.
	
	Figure \ref{fig:Num_it} shows the number of iterations needed to
	converge to a solution with this iteration procedure. Especially away
	from the resonances, a low number of iterations suffices for convergence
	to an accurate result. Also included in Figure \ref{fig:Num_it} are
	the conditions \eqref{eq:Lipschitz_cond_qper} and \eqref{eq:delta_cond_qper},
	which guarantee the convergence for the iteration to the steady state
	solution of system \eqref{eq:0}. Outside the two red curves both
	\eqref{eq:Lipschitz_cond_qper} and \eqref{eq:delta_cond_qper} are
	satisfied and, accordingly, the iteration is guaranteed to converge.
	Since these conditions are only sufficient for convergence, the iteration
	converges also for frequency pairs within the red curves. The number
	of iterations required increases gradually and within the white region bounded by green lines, the Picard iteration fails. In such cases, we proceed to employ the Newton\textendash Raphson scheme (cf. section \eqref{subsec:NR_qper}). 
	
	\begin{figure}[H]
		\begin{centering}
			\includegraphics[scale=0.75]{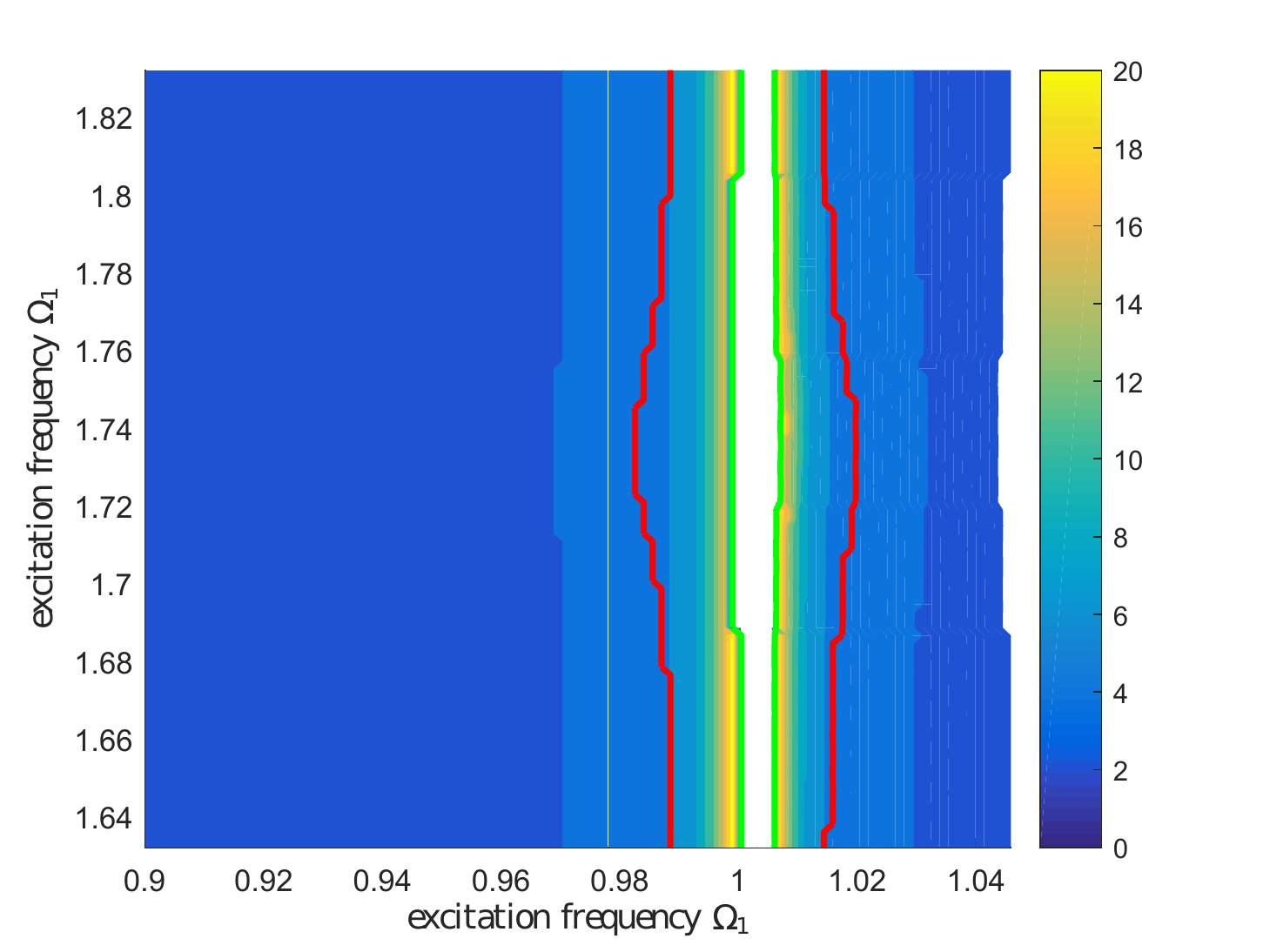}
			\par\end{centering}
		\caption{\label{fig:Num_it}Number of iterations needed on the construction
			of Figure \ref{fig:SP_qper}. Red curves bound the a priori guaranteed
			region of convergence for the Picard iteration. The white region
			is the domain where this iteration fails. In this region, we employ
			the Newton\textendash Raphson scheme (cf. Section~\ref{subsec:NR_qper}). }
	\end{figure}

	\subsubsection{Non-smooth nonlinearity\label{subsec:Ex_Nonsmooth}}
	
	As noted earlier, our iteration schemes are also applicable to non-smooth
	system as long as they are still Lipschitz. We select the nonlinearity
	of the form
	\begin{equation}
	S(q_{1})=\begin{cases}
	\alpha\text{sign}(q_{1})(|q_{1}|-\beta), & \textrm{for }|q_{1}|>\beta,\\
	0, & \textrm{otherwise},
	\end{cases}\label{eq:disconti_NL}
	\end{equation}
	which represents a hardening ($\alpha>0$) or softening ($\alpha<0$)
	spring with play $\beta>0$. The spring coefficient is given by $\tan^{-1}(\alpha)$,
	as depicted in Figure \ref{fig:disconti_force}. 
	
	\begin{figure}[H]
		\begin{centering}
			\includegraphics[width=0.5\linewidth]{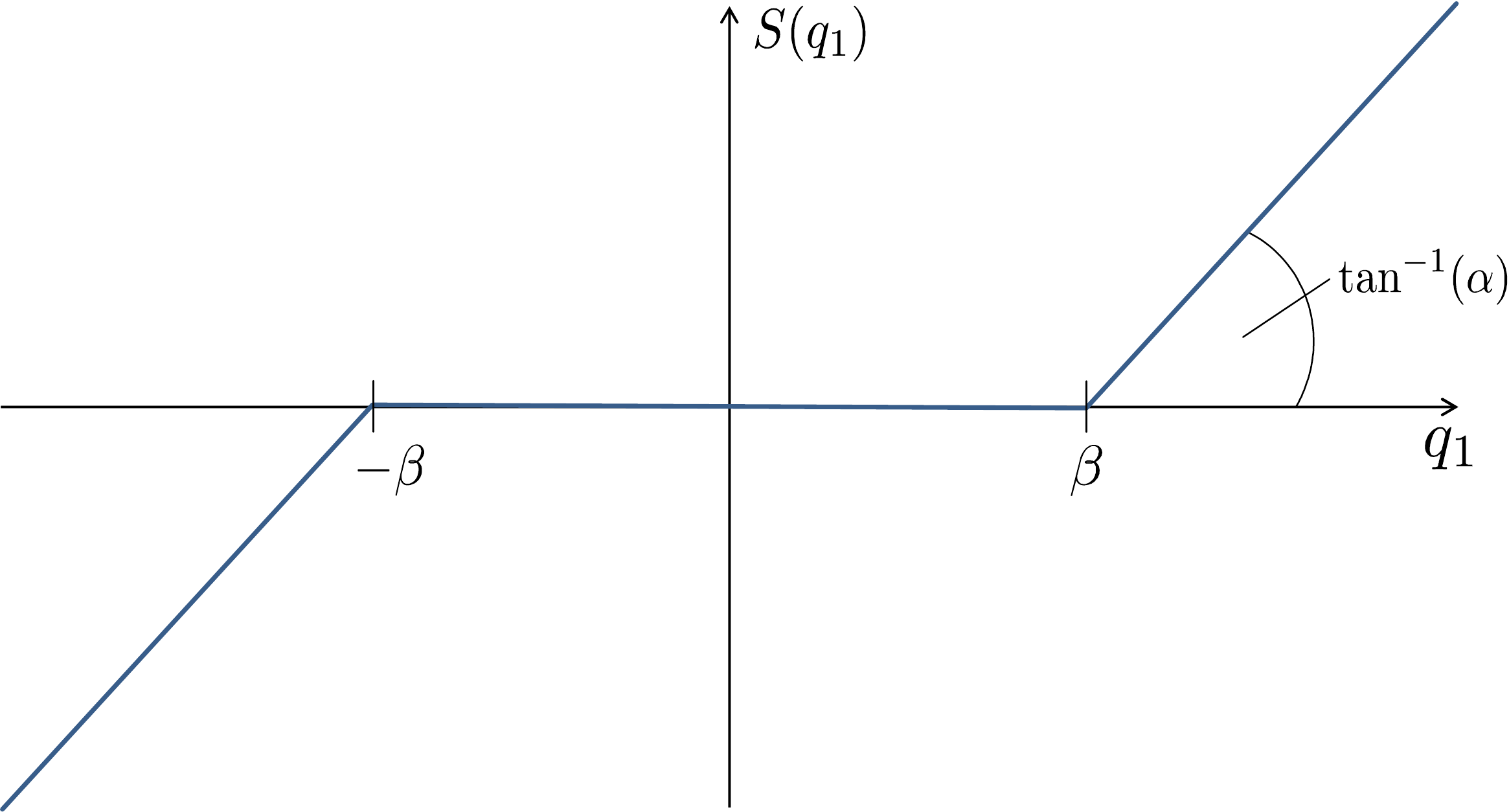}
			\par\end{centering}
		\caption{\label{fig:disconti_force}Graph of the non-smooth nonlinearity~\eqref{eq:disconti_NL}. }
	\end{figure}
	If we apply the forcing \[ f_1(t) = 0.02 \sin (\Omega t),\quad f_2(t) = 0 \] to system \eqref{eq:2Dof_sys}
	with the nonlinearity \eqref{eq:disconti_NL}, our iteration techniques
	yield the response curve depicted in Figure \ref{fig:disconti_NL}.
	The Picard iteration approach \eqref{eq:Picard_per_pos_only} converges
	for moderate amplitudes, also in the nonlinear regime ($|q_{1}|>\beta$).
	When the Picard iteration fails at higher amplitudes, we employ the
	Newton\textendash Raphson iteration. These results match
	closely with the amplitudes obtained by numerical integration, as seen
	in Figure~\ref{fig:disconti_NL}. 
	
	\begin{figure}[H]
		\begin{centering}
			\includegraphics[scale=0.6]{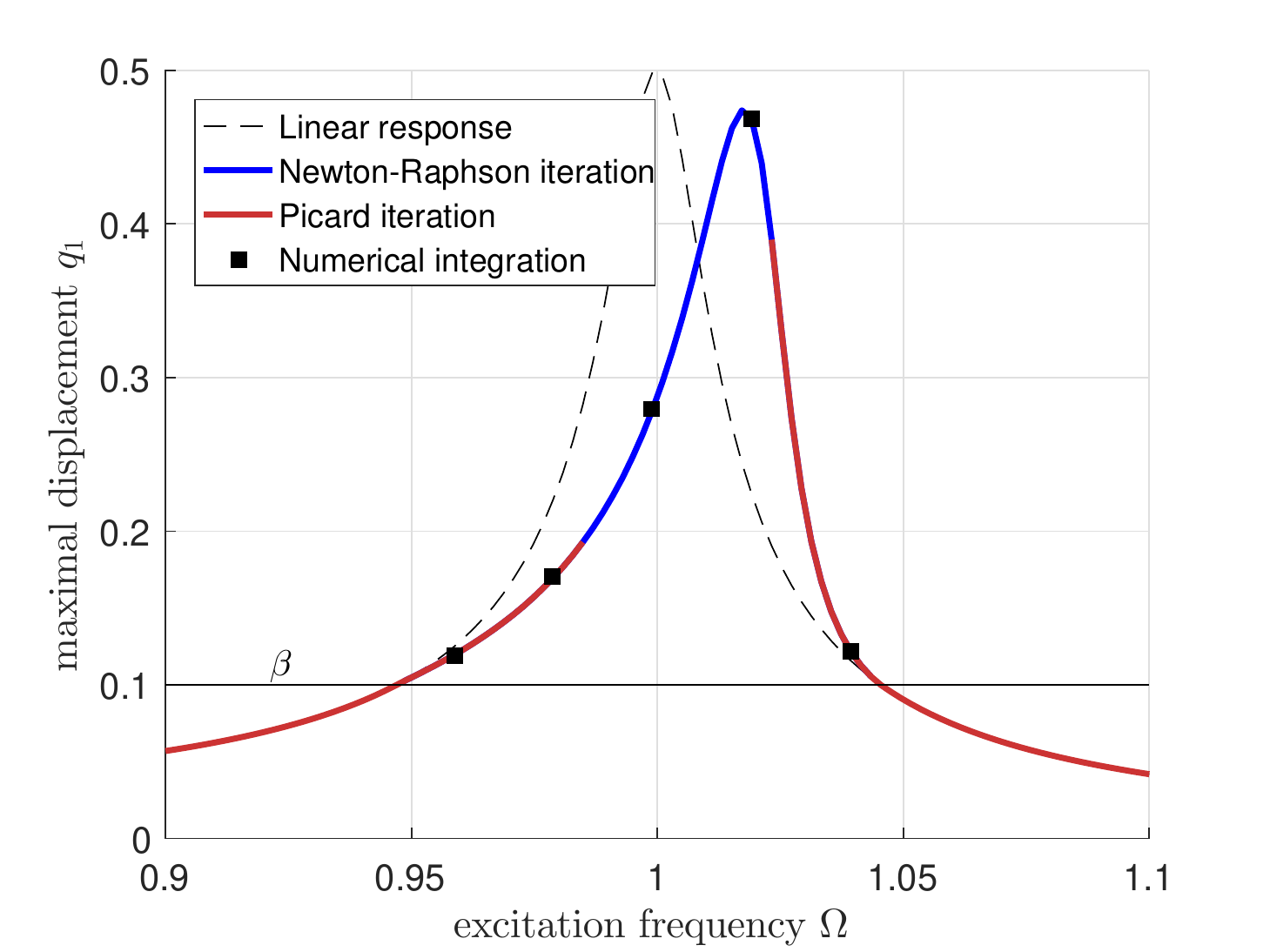}
			\par\end{centering}
		\caption{\label{fig:disconti_NL}Response curve for Example 1 with the nonlinearity
			\eqref{eq:disconti_NL} and the forcing \eqref{eq:singlharm_forc};
			parameters: $m=1$, $k=1$ and $c=0.02$, $ \alpha = \beta = 0.1 $. }
	\end{figure}
	
	\subsection{Nonlinear oscillator chain }
	
	\begin{figure}[H]
		\centering{}\includegraphics{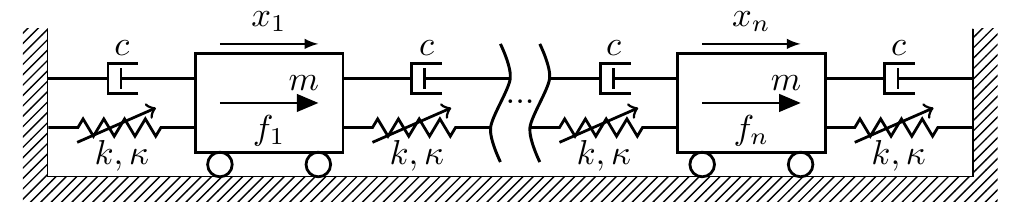}\caption{\label{fig:chain_example} An $n$-mass oscillator chain with coupled
			nonlinearity. We select the non-dimensional parameters $m=1$, $k=1,\kappa=0.5$
			and $c=1$. }
	\end{figure}
	
	To illustrate the applicability of our results to higher-dimensional systems and more complex nonlinearities, we consider a modification of the oscillator chain studied by Breunung and Haller \cite{Breunung_Backbone}. Shown Figure \eqref{fig:chain_example}, the oscillator chain consists of $n$ masses with linear and cubic nonlinear springs coupling every pair of adjacent masses. Thus, the nonlinear function $\mathbf{S}$ is given as: 
	\[
	\mathbf{S}(\mathbf{x})=\kappa\left[\begin{array}{c}
	x_{1}^{3}-\left(x_{2}-x_{1}\right)^{3}\\
	\left(x_{2}-x_{1}\right)^{3}-\left(x_{3}-x_{2}\right)^{3}\\
	\left(x_{3}-x_{2}\right)^{3}-\left(x_{4}-x_{3}\right)^{3}\\
	\vdots\\
	\left(x_{n-1}-x_{n-2}\right)^{3}-\left(x_{n}-x_{n-1}\right)^{3}\\
	\left(x_{2}-x_{1}\right)^{3}+x_{n}^{3}
	\end{array}\right].
	\]
	
	The frequency response curve obtained with the iteration described
	in section \ref{sec:Newton-Iteration} for harmonic forcing is shown
	in Figure \ref{fig:SP_coupled_nDOF} for $20$ degrees-of-freedom.
	We also include the frequency response obtained with the $\texttt{po}$-toolbox
	of $\textsc{coco}$ \cite{CoCo} with default settings for comparison. The integral equations
	approach gives the same solution as the $\texttt{po}$-toolbox of \textsc{coco},
	but the difference in run times is stunning: the \texttt{po}-toolbox of $\textsc{coco}$ takes about 12 minutes and 59
	seconds to generate this frequency response curve, whereas the integral-equation approach with a naive sequential continuation strategy takes 13 seconds to generate the same curve. This underlines the power of the approches proposed here for complex mechanical vibrations. 
	
	\begin{figure}[H]
		\begin{centering}
			\includegraphics[width=0.6\textwidth]{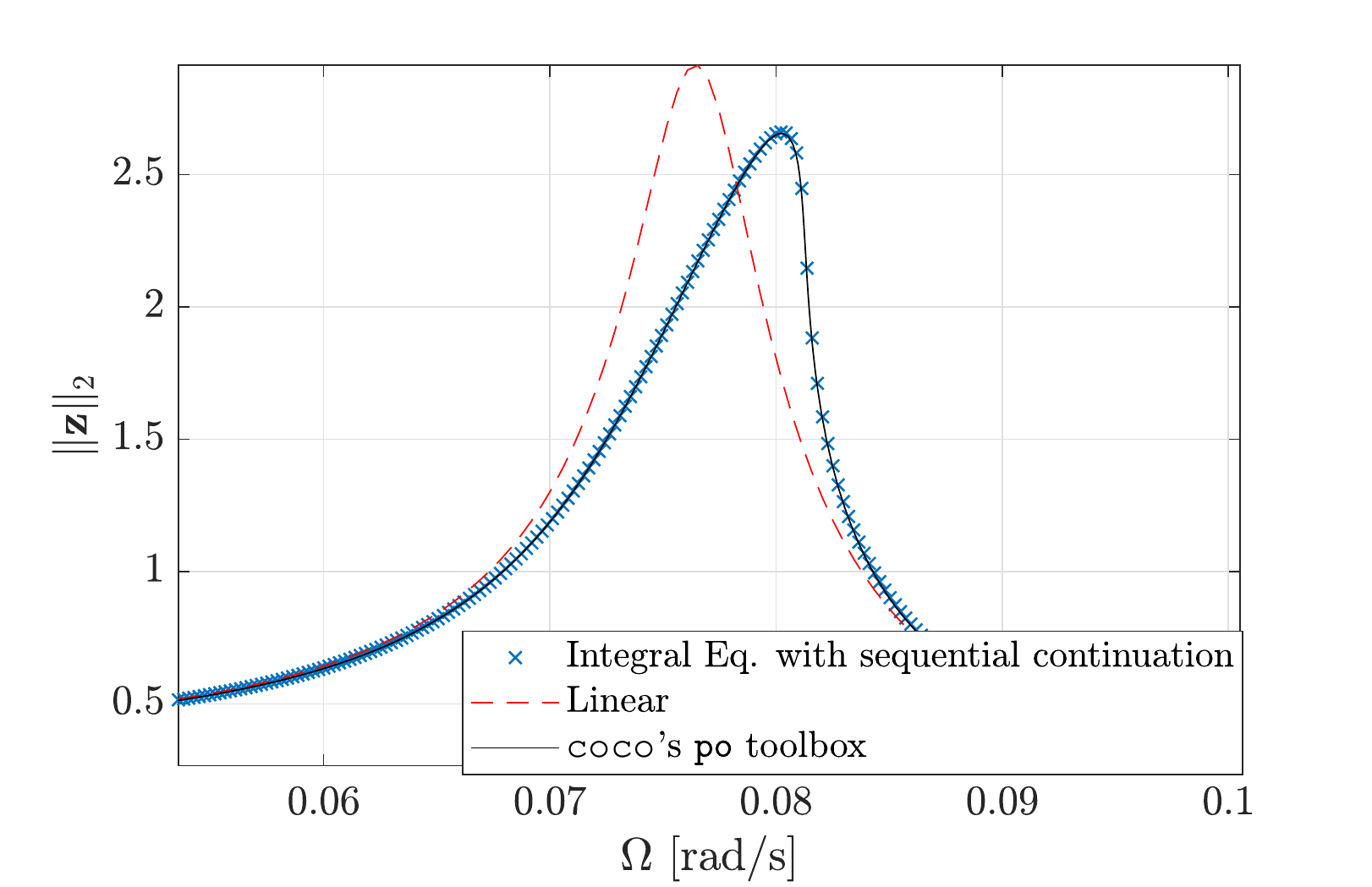}
			\par\end{centering}
		\caption{\label{fig:SP_coupled_nDOF} Comparison of frequency sweeps produced
			using different continuation techniques for a 20-DOF oscillator chain
			with coupled nonlinearity. }
	\end{figure}
	
	\section{Conclusion}
	
	We have presented an integral-equation approach for the fast computation
	of the steady-state response of nonlinear dynamical systems under
	external (quasi-) periodic forcing. Starting with a forced linear
	system, we derive integral equations that must be satisfied by the
	steady-state solutions of the full nonlinear system. The kernel of
	the integral equation is a Green's function, which we calculate explicitly
	for general mechanical systems. Due to these explicit formulae, the
	convolution with the Green's function can be performed with minimal effort, thereby making the solution of the equivalent integral
	equation significantly faster than full time integration of the dynamical
	system. We also show the applicability of the same equations to compute
	periodic orbits of unforced, conservative systems. 
	
	We employ a combination of Picard and the Newton\textendash Raphson
	iterations to solve the integral equations for the steady-state response.
	Since the Picard iteration requires only a simple application of a
	nonlinear map (and no direct solution via operator inversion), it
	is especially appealing for high-dimensional system. Furthermore,
	the nonlinearity only needs to be Lipschitz continuous, therefore
	our approach also applies to non-smooth systems, as we demonstrated
	numerically in section \ref{subsec:Ex_Nonsmooth}. We establish a
	rigorous a priori estimate for the convergence of the Picard iteration.
	From this estimate, we conclude that the convergence of the Picard
	iteration becomes problematic for high amplitudes and forcing frequencies
	near resonance with an eigenfrequency of the linearized system. This
	can also be observed numerically in Example \ref{subsec:EX_2Dof_periodic},
	where the Picard iteration fails close to resonance. 
	
	To capture the steady-state response for a full frequency sweep
	(including high amplitudes and resonant frequencies), we deploy the
	Newton\textendash Raphson iteration once the Picard iteration fails
	near resonance. The Newton\textendash Raphson formulation can be computationally-intensive
	as it requires a high-dimensional operator inversion, which would normally
	make this type of iteration potenially unfeasible for exceedingly high-dimensional
	systems. However, we circumvent this problem with the Newton\textendash Raphson
	method using modifications discussed in Section \ref{sec:Newton-Iteration}.

	We have further demonstrated that advanced numerical continuation
	is required to compute the (quasi-) periodic response when folds appear
	in solution branches. To this end, we formulated one such continuation
	scheme, i.e., the pseudo arc-length scheme, in our integral equations
	setting to facilitate capturing response around such folds. We also
	demonstrated that the integral equations approach can be coupled with
	existing state-of-the-art continuation packages to obtain better performance
	(cf. Section \ref{subsec:EX_2Dof_periodic}).
	
	Compared to well-established shooting-based techniques, our integral-equation approach also calculates quasi-periodic responses of
	dynamical systems and avoids numerical time integration. The latter
	can be computationally expensive for high-dimensional or stiff systems.
	In the case of purely geometric (position-dependent) nonlinearities,
	we can reduce the dimensionality of the corresponding integral iteration
	by half, by iterating on the position vector only. For numerical examples,
	we show that our integral equation approach equipped with numerical continuation
	outperforms available continuation packages significantly. As opposed
	to the broadly used harmonic balance procedure (cf. Chua and Ushida
	\cite{Chua_harm_bal} and Lau and Cheung \cite{Lau_QuasiP}), our
	approach also gives a computable and rigorous existence criterion
	for the (quasi-) periodic response of the system. 
	
	Along with this work, we provide a MATLAB\textsuperscript{\tiny\textregistered} code with a user-friendly
	implementation of the developed iterative schemes. This code implements
	the cheap and fast Picard iteration, as well as the robust Newton\textendash Raphson
	iteration, along with sequential/pseudo-arc length continuation. We
	have further tested our approach in combination with the MATLAB\textsuperscript{\tiny\textregistered}-based
	continuation package $\textsc{coco}$ \cite{CoCo} and obtained an
	improvement in performance. One could, therefore, further add an integral-equation-based
	toolbox to $\textsc{coco}$ with adaptive time steps in the discretization
	to obtain better efficiency. 
	
	\paragraph{Acknowledgments}
	
	We are thankful to Harry Dankowicz and Mingwu Li for clarifications
	and help with the continuation package $\textsc{coco}$ \cite{CoCo}. We also acknowledge helpful discussions with Mark Mignolet and Dane Quinn.
	
	\appendix
	
	\section{Proof of Lemma \ref{Lemma:1} \label{sec:ProofL1}}
	
	The general solution of \eqref{eq:modalfirstorder} is given by the
	classic variation of constants formula 
	\begin{equation}
	\mathbf{w}(t)=e^{\boldsymbol{\Lambda}t}\mathbf{w}(0)+\int_{0}^{t}e^{\boldsymbol{\Lambda}(t-s)}\boldsymbol{\psi}(s)\,ds.\label{eq:Duhamel_firstorder}
	\end{equation}
	If $\mathbf{w}_{0}(t)$ is the unique $T$-periodic solution of \eqref{eq:modalfirstorder},
	then we obtain from \eqref{eq:Duhamel_firstorder} that
	\[
	\mathbf{w}_{0}(T)=e^{\boldsymbol{\Lambda}T}\mathbf{w}_{0}(0)+\int_{0}^{T}e^{\boldsymbol{\Lambda}(T-s)}\boldsymbol{\psi}(s)\,ds=\mathbf{w}_{0}(0).
	\]
	This gives the initial condition for the periodic solution as
	\begin{equation}
	\mathbf{w}_{0}(0)=\left[\mathbf{I}-e^{\boldsymbol{\Lambda}T}\right]^{-1}\int_{0}^{T}e^{\boldsymbol{\Lambda}(T-s)}\boldsymbol{\psi}(s)\,ds.\label{eq:init_cond}
	\end{equation}
	Note that the matrix $\left[\mathbf{I}-e^{\boldsymbol{\Lambda}T}\right]$
	is invertible due to the non-resonance condition \eqref{eq:nonresonance-1}.
	Substitution of the initial condition from \eqref{eq:init_cond} into
	the general solution \eqref{eq:Duhamel_firstorder}, we obtain an
	expression for the unique $T$-periodic solution $\mathbf{w}_{0}(t)$
	to \eqref{eq:modalfirstorder} as 
	\begin{align}
	\mathbf{w}_{0}(t) & =e^{\boldsymbol{\Lambda}t}\left[\mathbf{I}-e^{\boldsymbol{\Lambda}T}\right]^{-1}\int_{0}^{T}e^{\boldsymbol{\Lambda}(T-s)}\boldsymbol{\psi}(s)\,ds+\int_{0}^{t}e^{\boldsymbol{\Lambda}(t-s)}\boldsymbol{\psi}(s)\,ds\nonumber \\
	& =e^{\boldsymbol{\Lambda}T}\left[\mathbf{I}-e^{\boldsymbol{\Lambda}T}\right]^{-1}\int_{0}^{T}e^{\boldsymbol{\Lambda}(t-s)}\boldsymbol{\psi}(s)\,ds+\int_{0}^{t}e^{\boldsymbol{\Lambda}(t-s)}\boldsymbol{\psi}(s)\,ds\label{eq:temp2}\\
	& =e^{\boldsymbol{\Lambda}T}\left[\mathbf{I}-e^{\boldsymbol{\Lambda}T}\right]^{-1}\int_{0}^{t}e^{\boldsymbol{\Lambda}(t-s)}\boldsymbol{\psi}(s)\,ds+e^{\boldsymbol{\Lambda}T}\left[\mathbf{I}-e^{\boldsymbol{\Lambda}T}\right]^{-1}\int_{t}^{T}e^{\boldsymbol{\Lambda}(t-s)}\boldsymbol{\psi}(s)\,ds+\int_{0}^{t}e^{\boldsymbol{\Lambda}(t-s)}\boldsymbol{\psi}(s)\,ds\nonumber \\
	& =\int_{0}^{t}\left[e^{\boldsymbol{\Lambda}T}\left[\mathbf{I}-e^{\boldsymbol{\Lambda}T}\right]^{-1}+\mathbf{I}\right]e^{\boldsymbol{\Lambda}(t-s)}\boldsymbol{\psi}(s)\,ds+\int_{t}^{T}e^{\boldsymbol{\Lambda}T}\left[\mathbf{I}-e^{\boldsymbol{\Lambda}T}\right]^{-1}e^{\boldsymbol{\Lambda}(t-s)}\boldsymbol{\psi}(s)\,ds\nonumber \\
	& =\int_{0}^{T}\underbrace{\left[e^{\boldsymbol{\Lambda}T}\left[\mathbf{I}-e^{\boldsymbol{\Lambda}T}\right]^{-1}+h(t-s)\mathbf{I}\right]e^{\boldsymbol{\Lambda}(t-s)}}_{\mathbf{G}(t-s,T)}\boldsymbol{\psi}(s)\,ds\,,\nonumber 
	\end{align}
	where $\mathbf{G}(t,T)$ is a diagonal matrix with the entries given
	by \eqref{eq:G_firstorder}. Using the linear modal transformation
	$\mathbf{z}_{P}=\mathbf{Vw}_{0}$, we find the unique $T$-periodic
	solution to \eqref{eq:firstorderlinear} in the form
	\[
	\mathbf{z}_{P}(t)=\mathbf{V}\int_{0}^{T}\mathbf{G}(t-s,T)\boldsymbol{\psi}(s)\,ds.
	\]
	
	\section{Proof of Theorem \ref{Theorem:IE_P}\label{sec:ProofT1}}
	
	If $\mathbf{z}(t)$ is a $T$-periodic solution of \eqref{eq:firstorder},
	then it satisfies the linear inhomogeneous differential equation
	\[
	\mathbf{B}\dot{\mathbf{z}}=\mathbf{A}\mathbf{z}+\mathbf{F}(t)-\mathbf{R}(\mathbf{z}(t)),
	\]
	where we view $\mathbf{F}(t)-\mathbf{R}(\mathbf{z}(t))$ as a $T-$periodic
	forcing term. Thus, according to Lemma \ref{Lemma:1}, we have
	\[
	\mathbf{z}(t)=\mathbf{V}\int_{0}^{T}\mathbf{G}(t-s,T)\mathbf{V}^{-1}\left[\mathbf{F}(s)-\mathbf{R}(\mathbf{z}(s))\right]\,ds,
	\]
	as claimed in statement (\emph{i}). 
	
	Now, let $\mathbf{z}(t)$ be a continuous, $T-$periodic solution to
	\eqref{eq:int_periodic}. After introducing the notation $\boldsymbol{\chi}(t)=\mathbf{V}^{-1}\left[\mathbf{F}(t)-\mathbf{R}(\mathbf{z}(t))\right]$,
	we have 
	\begin{align}
	\mathbf{z}(t) & =\mathbf{V}\int_{0}^{T}\mathbf{G}(t-s,T)\boldsymbol{\chi}(s)\,ds,\nonumber \\
	& =\mathbf{V}e^{\boldsymbol{\Lambda}t}\left(\mathbf{I}-e^{\boldsymbol{\Lambda}T}\right)^{-1}\int_{0}^{T}e^{\boldsymbol{\Lambda}(T-s)}\boldsymbol{\chi}(s)\,ds+\mathbf{V}\int_{0}^{t}e^{\boldsymbol{\Lambda}(t-s)}\boldsymbol{\chi}(s)\,ds\,,\label{eq:temp1}
	\end{align}
	where \eqref{eq:temp1} is a direct consequence of \eqref{eq:temp2}.
	By the continuity of $\mathbf{z}(t)$, $\boldsymbol{\chi}(t)$ is
	also at least $C^{0}$ ($\mathbf{F}$ is at least $C^{0}$ and $\mathbf{\ensuremath{R}}$
	is Lipschitz). Thus, for any $t\in[0,T]$, the right-hand side of
	\eqref{eq:temp1} can be differentiated with respect to $t$ according
	to the Leibniz rule, to obtain 
	\begin{align*}
	\frac{d\mathbf{z}(t)}{dt} & =\mathbf{V}\boldsymbol{\Lambda}e^{\boldsymbol{\Lambda}t}\left(\mathbf{I}-e^{\boldsymbol{\Lambda}T}\right)^{-1}\int_{0}^{T}e^{\boldsymbol{\Lambda}(T-s)}\boldsymbol{\chi}(s)\,ds+\mathbf{V}\boldsymbol{\Lambda}\int_{0}^{t}e^{\boldsymbol{\Lambda}(t-s)}\boldsymbol{\chi}(s)\,ds\,+\mathbf{V}\boldsymbol{\Lambda}\boldsymbol{\chi}(t)\\
	& =\mathbf{V}\boldsymbol{\Lambda}\mathbf{V}^{-1}\mathbf{z}(t)+\mathbf{V}\boldsymbol{\Lambda}\boldsymbol{\chi}(t)\qquad\text{(using \eqref{eq:temp1})}\\
	& =\mathbf{V}\boldsymbol{\Lambda}\mathbf{V}^{-1}\mathbf{z}(t)+\mathbf{V}\boldsymbol{\Lambda}\mathbf{V}^{-1}\left[\mathbf{F}(t)-\mathbf{R}(\mathbf{z}(t))\right],
	\end{align*}
	which implies
	\[
	\mathbf{B}\frac{d\mathbf{z}(t)}{dt}=\mathbf{A}\mathbf{z}(t)+\mathbf{F}(t)-\mathbf{R}(\mathbf{z}(t)),
	\]
	as claimed in statement (\emph{ii}). 
	\section{Proof of Lemma \ref{Lemma:QP_linear}\label{sec:ProofLemma2}}
	By the linearity of \eqref{eq:firstorderlinear}, one can verify that
	the sum of periodic solutions given by Lemma~\ref{Lemma:1} for each
	periodic forcing summand in \eqref{eq:QP_forcing} is the unique,
	bounded solution of \eqref{eq:firstorderlinear}. In case of a forcing
	written as a Fourier series, we can carry out the integration appearing
	in Lemma \ref{Lemma:1} for each summand in this bounded solution
	explicitly in diagonalized coordinates $\mathbf{z}=\mathbf{V}\mathbf{w}$.
	With the notation $\boldsymbol{\psi_{\boldsymbol{\kappa}}}=\mathbf{V}^{-1}\mathbf{F}_{\boldsymbol{\kappa}}$,
	we then obtain for the $j^{\text{th}}$ degree of freedom: 
	{\small
		\begin{align*}
		w_{j}(t) & =\sum_{\boldsymbol{\kappa}\in\mathbb{Z}^{k}}\int_{0}^{T_{\boldsymbol{\kappa}}}G_{j}(t-s,T_{\boldsymbol{\kappa}})\psi_{\boldsymbol{\kappa},j}e^{i\left\langle \boldsymbol{\kappa},\boldsymbol{\Omega}\right\rangle s}\,ds\\
		& =\sum_{\boldsymbol{\kappa}\in\mathbb{Z}^{k}}\int_{0}^{T_{\boldsymbol{\kappa}}}e^{\lambda_{j}t}\left(\frac{e^{\lambda_{j}T_{\boldsymbol{\kappa}}}}{1-e^{\lambda_{j}T_{\boldsymbol{\kappa}}}}+h(t)\right)\psi_{\boldsymbol{\kappa},j}e^{i\left\langle \boldsymbol{\kappa},\boldsymbol{\Omega}\right\rangle s}\,ds\\
		& =\sum_{\boldsymbol{\kappa}\in\mathbb{Z}^{k}}\int_{0}^{t}e^{\lambda_{j}(t-s)}\left(\frac{1}{1-e^{\lambda_{j}T_{\boldsymbol{\kappa}}}}\right)\psi_{\boldsymbol{\kappa},j}e^{i\left\langle \boldsymbol{\kappa},\boldsymbol{\Omega}\right\rangle s}\,ds+\sum_{\boldsymbol{\kappa}\in\mathbb{Z}^{k}}\int_{t-s}^{T_{\boldsymbol{\kappa}}}e^{\lambda_{j}(t-s)}\left(\frac{e^{\lambda_{j}T_{\boldsymbol{\kappa}}}}{1-e^{\lambda_{j}T_{\boldsymbol{\kappa}}}}\right)\psi_{\boldsymbol{\kappa},j}e^{i\left\langle \boldsymbol{\kappa},\boldsymbol{\Omega}\right\rangle s}\,ds\\
		&=\sum_{\boldsymbol{\kappa}\in\mathbb{Z}^{k}}e^{\lambda_{j}t}\left(\frac{1}{1-e^{\lambda_{j}T_{\boldsymbol{\kappa}}}}\right)\psi_{\boldsymbol{\kappa},j}\int_{0}^{t}e^{-\lambda_{j}s}e^{i\left\langle \boldsymbol{\kappa},\boldsymbol{\Omega}\right\rangle s}\,ds+\sum_{\boldsymbol{\kappa}\in\mathbb{Z}^{k}}e^{\lambda_{j}t}\left(\frac{e^{\lambda_{j}T_{\boldsymbol{\kappa}}}}{1-e^{\lambda_{j}T_{\boldsymbol{\kappa}}}}\right)\psi_{\boldsymbol{\kappa},j}\int_{t}^{T_{\boldsymbol{\kappa}}}e^{-\lambda_{j}s}e^{i\left\langle \boldsymbol{\kappa},\boldsymbol{\Omega}\right\rangle s}\,ds\\
		& =\sum_{\boldsymbol{\kappa}\in\mathbb{Z}^{k}}e^{\lambda_{j}t}\left(\frac{1}{1-e^{\lambda_{j}T_{\boldsymbol{\kappa}}}}\right)\psi_{\boldsymbol{\kappa},j}\frac{1}{i\left\langle \boldsymbol{\kappa},\boldsymbol{\Omega}\right\rangle -\lambda_{j}}\left.e^{(i\left\langle \boldsymbol{\kappa},\boldsymbol{\Omega}\right\rangle -\lambda_{j})s}\right|_{s=0}^{s=t} +
		\end{align*}
		\begin{align*}
		\qquad& \quad \sum_{\boldsymbol{\kappa}\in\mathbb{Z}^{k}}e^{\lambda_{j}t}\left(\frac{e^{\lambda_{j}T_{\boldsymbol{\kappa}}}}{1-e^{\lambda_{j}T_{\boldsymbol{\kappa}}}}\right)\psi_{\boldsymbol{\kappa},j}\frac{1}{i\left\langle \boldsymbol{\kappa},\boldsymbol{\Omega}\right\rangle -\lambda_{j}}\left.e^{(i\left\langle \boldsymbol{\kappa},\boldsymbol{\Omega}\right\rangle -\lambda_{j})s}\right|_{s=t}^{s=T_{\boldsymbol{\kappa}}}\\
		&=\sum_{\boldsymbol{\kappa}\in\mathbb{Z}^{k}}e^{\lambda_{j}t}\left(\frac{1}{1-e^{\lambda_{j}T_{\boldsymbol{\kappa}}}}\right)\psi_{\boldsymbol{\kappa},j}\frac{1}{i\left\langle \boldsymbol{\kappa},\boldsymbol{\Omega}\right\rangle -\lambda_{j}}\left[e^{(i\left\langle \boldsymbol{\kappa},\boldsymbol{\Omega}\right\rangle -\lambda_{j})t}-1+e^{\lambda_{j}T_{\boldsymbol{\kappa}}}(e^{(i\left\langle \boldsymbol{\kappa},\boldsymbol{\Omega}\right\rangle -\lambda_{j})T_{\boldsymbol{\kappa}}}-e^{(i\left\langle \boldsymbol{\kappa},\boldsymbol{\Omega}\right\rangle -\lambda_{j})t})\right]\\
		& =\sum_{\boldsymbol{\kappa}\in\mathbb{Z}^{k}}e^{\lambda_{j}t}\psi_{\boldsymbol{\kappa},j}\frac{1}{i\left\langle \boldsymbol{\kappa},\boldsymbol{\Omega}\right\rangle -\lambda_{j}}\frac{1}{1-e^{\lambda_{j}T_{\boldsymbol{\kappa}}}}\left[(1-e^{\lambda_{j}T_{\boldsymbol{\kappa}}})(e^{(i\left\langle \boldsymbol{\kappa},\boldsymbol{\Omega}\right\rangle -\lambda_{j})t})-1+e^{i\left\langle \boldsymbol{\kappa},\boldsymbol{\Omega}\right\rangle T_{\boldsymbol{\kappa}}}\right]\\
		& =\sum_{\boldsymbol{\kappa}\in\mathbb{Z}^{k}}\frac{1}{i\left\langle \boldsymbol{\kappa},\boldsymbol{\Omega}\right\rangle -\lambda_{j}}\psi_{\boldsymbol{\kappa},j}e^{i\left\langle \boldsymbol{\kappa},\boldsymbol{\Omega}\right\rangle t}\,.
		\end{align*}}
	\section{Explicit Green's function for mechanical systems: Proof of Lemma~\ref{Lemma:special_case_QP_linear} \label{sec:ProofLemma3}}
	
	The first-order ODE formulation for \eqref{eq:forced_modal_component-1}
	is given by
	\begin{equation}
	\frac{d}{dt}\left(\begin{array}{c}
	y_{j}\\
	\dot{y}_{j}
	\end{array}\right)=\underbrace{\left(\begin{array}{cc}
		0 & 1\\
		-\omega_{0,j}^{2} & -2\zeta_{j}\omega_{0,j}
		\end{array}\right)}_{\mathbf{A}_{j}}\left(\begin{array}{c}
	y_{j}\\
	\dot{y}_{j}
	\end{array}\right)+\left(\begin{array}{c}
	0\\
	\varphi_{j}(t)
	\end{array}\right),\qquad j=1,\ldots,n.\label{eq:modalsys}
	\end{equation}
	By the classic variation of constants formula for first-order systems
	of ordinary differential equations, the general solution of \eqref{eq:modalsys}
	is of the form 
	\begin{equation}
	\left(\begin{array}{c}
	y_{j}(t)\\
	\dot{y}_{j}(t)
	\end{array}\right)=\mathbf{N}^{(j)}(t)\left(\begin{array}{c}
	y_{j}(0)\\
	\dot{y}_{j}(0)
	\end{array}\right)+\int_{0}^{t}\mathbf{N}^{(j)}(t-s)\left(\begin{array}{c}
	0\\
	\varphi_{j}(s)
	\end{array}\right)ds,\qquad j=1,\ldots,n,\label{eq:Duhamel_j}
	\end{equation}
	with $\mathbf{N}(t)=e^{\mathbf{A}_{j}t}$ denoting the fundamental
	matrix solution for the $j^{th}$ mode with $\mathbf{N}(0)=\mathbf{I}.$
	Thus, the homogeneous (unforced) version of \eqref{eq:forced_modal_component-1},
	the explicit solution can be obtained as
	\begin{equation}
	\left(\begin{array}{c}
	y_{j}(t)\\
	\dot{y}_{j}(t)
	\end{array}\right)=\mathbf{N}^{(j)}(T)\left(\begin{array}{c}
	y_{j}(0)\\
	\dot{y}_{j}(0)
	\end{array}\right),\qquad j=1,\ldots,n.\label{eq:Ndef}
	\end{equation}
	Since $\mathbf{F}(t)$ is uniformly bounded for all times and all
	$\mathbf{A}_{j}$ matrices are \emph{hyperbolic} ($\zeta_{j}>0$ for
	$j=1,...,n$), then a unique uniformly bounded solution exists for
	the $2n$-dimensional system of linear ordinary differential equations
	(ODEs) \eqref{eq:modalsys} (see, e.g., Burd \cite{burd07}). The
	initial condition $\left(y_{j}(0),\dot{y}_{j}(0)\right)$ for the
	unique $T$-periodic solution of \eqref{eq:Duhamel_j} is obtained
	by imposing periodicity, i.e., $y_{j}(0)=y_{j}(T)$ for $j=1,\dots,n$
	and is given by
	{\small
		\begin{align}
		\left(\begin{array}{c}
		y_{j}(0)\\
		y_{j}(0)
		\end{array}\right) & =\frac{1}{1-\text{Trace}\left(\mathbf{N}^{(j)}(T)\right)+\text{det}\left(\mathbf{N}^{(j)}(T)\right)}\int_{0}^{T}\left(\begin{array}{c}
		\left(1-N_{22}^{(j)}(T)\right)N_{12}^{(j)}(T-s)+N_{12}^{(j)}(T)N_{22}^{(j)}(T-s)\\
		\left(1-N_{11}^{(j)}(T)\right)N_{22}^{(j)}(T-s)+N_{21}^{(j)}(T)N_{12}^{(j)}(T-s)
		\end{array}\right)\varphi_{j}(s)\,ds\,.\label{eq:IC}
		\end{align}
	}
	Finally, the unique periodic response $\left(y_{j}(t),\dot{y}_{j}(t)\right)$
	is obtained by substituting the initial condition \eqref{eq:IC} into
	the Duhamel's integral formula \eqref{eq:Duhamel_j} as
	{\small
		\begin{align}
		\left(\begin{array}{c}
		y_{j}(t)\\
		\dot{y}_{j}(t)
		\end{array}\right) & =\frac{\mathbf{N}^{(j)}(t)}{1-\text{Trace}\left(\mathbf{N}^{(j)}(T)\right)+\text{det}\left(\mathbf{N}^{(j)}(T)\right)}\int_{0}^{T}\left(\begin{array}{c}
		\left(1-N_{22}^{(j)}(T)\right)N_{12}^{(j)}(T-s)+N_{12}^{(j)}(T)N_{22}^{(j)}(T-s)\\
		\left(1-N_{11}^{(j)}(T)\right)N_{22}^{(j)}(T-s)+N_{21}^{(j)}(T)N_{12}^{(j)}(T-s)
		\end{array}\right)\varphi_{j}(s)\,ds\nonumber \\
		& \quad+\int_{0}^{t}\mathbf{N}^{(j)}(t-s)\left(\begin{array}{c}
		0\\
		\varphi_{j}(s)
		\end{array}\right)ds,\qquad j=1,\ldots,n\,.\label{particular_per_sol}
		\end{align}
	}
	With the notation introduced in \eqref{eq:alpha_beta}, i.e., 
	\[
	\alpha_{j}:=\text{Re}(\lambda_{2j}),\quad\omega_{j}:=|\text{Im}(\lambda_{2j})|,\quad\beta_{j}:=\alpha_{j}+\omega_{j},\quad\gamma_{j}:=\alpha_{j}-\omega_{j,}\quad j=1,\dots,n,
	\]
	the specific expressions for the fundamental matrix of solutions of
	\eqref{eq:modalsys} in the under-damped, the critically-damped and
	the over-damped case are given by 
	\begin{equation}
	\mathbf{N}^{(j)}(t)=\begin{cases}
	\frac{e^{\alpha_{j}t}}{\omega_{j}}\left(\begin{array}{cc}
	\omega_{j}\cos\omega_{j}t-\alpha_{j}\sin\omega_{j}t & \sin\omega_{j}t\\
	-\alpha_{j}^{2}\sin\omega_{j}t-\omega_{j}^{2}\sin\omega_{j}t & \omega_{j}\cos\omega_{j}t+\alpha_{j}\sin\omega_{j}t
	\end{array}\right), & \zeta_{j}<1\\
	\left(\begin{array}{cc}
	e^{\alpha_{j}t}-\alpha_{j}te^{\alpha_{j}t} & te^{\alpha_{j}t}\\
	-\alpha_{j}^{2}te^{\alpha_{j}t} & e^{\alpha_{j}t}+\alpha_{j}te^{\alpha_{j}t}
	\end{array}\right)\,, & \zeta_{j}=1\\
	\frac{1}{\beta_{j}-\gamma_{j}}\left(\begin{array}{cc}
	\beta_{j}e^{\gamma_{j}t}-\alpha_{j}e^{\beta_{j}t} & e^{\beta_{j}t}-e^{\gamma_{j}t}\\
	\gamma_{j}\beta_{j}\left(e^{\gamma_{j}t}-e^{\beta_{j}t}\right) & \beta_{j}e^{\beta_{j}t}-\gamma_{j}e^{\gamma_{j}t}
	\end{array}\right)\,, & \zeta_{j}>1
	\end{cases},\quad j=1,\dots,n\,.\label{eq:fund_sol}
	\end{equation}
	Furthermore, we have 
	\begin{align}
	\mathrm{Trace\,}\mathbf{N}^{(j)}(t) & =\begin{cases}
	2e^{\alpha_{j}t}\cos\omega_{j}t\,, & \zeta_{j}<1\\
	2e^{\beta_{j}t}\,, & \zeta_{j}=1\\
	e^{\beta_{j}t}+e^{\gamma_{j}t}\,, & \zeta_{j}>1
	\end{cases},\quad j=1,\dots,n\,.\nonumber \\
	\det\mathbf{N}^{(j)}(t) & =\begin{cases}
	e^{2\alpha_{j}t}\,, & \zeta_{j}<1\\
	e^{2\beta_{j}t}\,, & \zeta_{j}=1\\
	e^{\left(\beta_{j}+\gamma_{j}\right)t}\,, & \zeta_{j}>1
	\end{cases},\quad j=1,\dots,n\,.\label{eq:N invariants}
	\end{align}
	Thus, we can explicitly compute the particular periodic solution given
	in \eqref{particular_per_sol} using \eqref{eq:fund_sol} as
	\[
	\mathbf{y}(t)=\int_{0}^{T}\mathbf{L}(t-s,T)\boldsymbol{\varphi}(s)\,ds,\quad\mathbf{L}(t-s,T)=\mathrm{diag}\left(L_{1}(t-s,T),\ldots,L_{n}(t-s,T)\right)\in\mathbb{R}^{n\times n},
	\]
	\[
	\dot{\mathbf{y}}(t)=\int_{0}^{T}\mathbf{J}(t-s,T)\boldsymbol{\varphi}(s)\,ds,\quad\mathbf{J}(t-s,T)=\mathrm{diag}\left(J_{1}(t-s,T),\ldots,J_{n}(t-s,T)\right)\in\mathbb{R}^{n\times n},
	\]
	with the diagonal elements of the Green's function matrices $ \mathbf{L,J} $ defined in
	\eqref{eq:G_pos} and \eqref{eq:J_vel},i.e., 
	\[
	L_{j}(t,T)=\begin{cases}
	\frac{e^{\alpha_{j}t}}{\omega_{j}}\left[\frac{e^{\alpha_{j}T}\left[\sin\omega_{j}(T+t)-e^{\alpha_{j}T}\sin\omega_{j}t\right]}{1+e^{2\alpha_{j}T}-2e^{\alpha_{j}T}\cos\omega_{j}T}+h(t)\sin\omega_{j}t\right], & \zeta_{j}<1\\
	\frac{e^{\alpha_{j}(T+t)}\left[\left(1-e^{\alpha_{j}T}\right)t+T\right]}{\left(1-e^{\alpha_{j}T}\right)^{2}}+h(t)te^{\alpha_{j}t}\,, & \zeta_{j}=1\\
	\frac{1}{(\beta_{j}-\gamma_{j})}\left[\frac{e^{\beta_{j}(T+t)}\left(1-e^{\gamma_{j}T}\right)-e^{\gamma_{j}(T+t)}\left(1-e^{\beta_{j}T}\right)}{1-e^{\gamma_{j}T}-e^{\beta_{j}T}+e^{\left(\gamma_{j}+\beta_{j}\right)T}}+h(t)\left(e^{\beta_{j}t}-e^{\gamma_{j}t}\right)\right], & \zeta_{j}>1
	\end{cases},
	\]
	\[
	\begin{split}J_{j}(t,T) & =\begin{cases}
	\begin{array}{c}
	\frac{e^{\alpha_{j}t}}{\omega_{j}}\left[\frac{e^{\alpha_{j}T}\left[\omega_{j}\left(\cos\omega_{j}(T+t)-e^{\alpha_{j}T}\cos\omega_{j}t\right)+\alpha_{j}\left(\sin\omega_{j}(T+t)-e^{\alpha_{j}T}\sin\omega_{j}t\right)\right]}{1+e^{2\alpha_{j}T}-2e^{\alpha_{j}T}\cos\omega_{j}T}+\right.\\
	\left.h(t)\left(\frac{1}{\omega_{j}}\cos\omega_{j}t+\alpha_{j}\sin\omega_{j}t\right)\right]
	\end{array}\quad, & \zeta_{j}<1\\
	\frac{e^{\alpha_{j}(T+t)}\left[\left(1-e^{\alpha_{j}T}\right)\left(1+\alpha_{j}t\right)+\alpha_{j}T\right]}{\left(1-e^{\alpha_{j}T}\right)^{2}}+h(t)\left(e^{\alpha_{j}t}+\alpha_{j}te^{\alpha_{j}t}\right)\,, & \zeta_{j}=1\\
	\frac{1}{(\beta_{j}-\gamma_{j})}\left[\frac{\beta_{j}e^{\beta_{j}(T+t)}\left(1-e^{\gamma_{j}T}\right)-\gamma_{j}e_{j}^{\gamma_{j}(T+t)}\left(1-e^{\beta_{j}T}\right)}{1-e^{\gamma_{j}T}-e^{\beta_{j}T}+e^{\left(\gamma_{j}+\beta_{j}\right)T}}+h(t)\left(\beta_{j}e^{\beta_{j}t}-\gamma_{j}e^{\gamma_{j}t}\right)\right], & \zeta_{j}>1
	\end{cases}\end{split}
	\,,\quad j=1,\dots,n\,
	\,.
	\]
	
	Finally, the linear periodic response $\mathbf{x}_{P}(t)$ in the original
	system coordinates can then obtained by the linear transformation
	$\mathbf{x}_{P}(t)=\mathbf{Uy}(t)$ as 
	\[
	\mathbf{x}_{P}(t)=\mathbf{U}\int_{0}^{T}\mathbf{L}(t-s,T)\mathbf{U}^{\top}\mathbf{f}(s)\,ds.
	\]
	
	\section{Derivative of Green's function with respect to $ T $}
	\label{app:derivative}
	The derivative with respect to the time period $ T $ of the first-order periodic Green's function $ \mathbf{G} $ given in~\eqref{eq:G_firstorder} is simply given by
	\begin{equation}
	\frac{\partial G_{j}}{\partial T}(t,T)=
	\lambda_je^{\lambda_{j}t}\frac{e^{\lambda_{j}T}}{(1-e^{\lambda_{j}T})^2},\quad j=1,\dots,2n\,.\label{eq:G_derv}
	\end{equation}
	We also provide the derivative of the Green's function $\mathbf{L}$
	with respect to $T$ to ease the computation of the Jacobian of the
	zero function in during numerical continuation. This is obtained by simply differentiating \eqref{eq:G_pos} with respect to T. We use a symbolic toolbox for this procedure: 
	{\small
		\[\frac{dL_{j}}{dT}(t,T)=\begin{cases}
		\begin{array}{c}
		\frac{e^{\alpha_{j}(t+T)}}{\omega_{j}\left(1+e^{2\alpha_{j}T}-2e^{\alpha_{j}T}\cos\omega_{j}T\right)^{2}}\left[\omega_{j}\cos\omega_{j}(T+t)+\alpha_{j}\sin\omega_{j}(T+t)\right.-\\
		\left.2e^{\alpha_{j}T}\left(\omega_{j}\cos\omega_{j}t+\alpha_{j}\sin\omega_{j}t\right)+e^{2\alpha_{j}T}\left(\alpha_{j}\sin\omega_{j}(t-T)+\omega_{j}\cos\omega_{j}(t-T)\right)\right]
		\end{array}, & \zeta_{j}<1\\
		\frac{\alpha_{j}e^{\alpha_{j}(t+T)}}{(e^{\alpha_{j}T}-1)^{2}}\left[t+T-2te^{\alpha_{j}T}+1-\frac{2e^{\alpha_{j}T}(T-t(e^{\alpha_{j}T}-1))}{(e^{\alpha_{j}T}-1)}\right] & \zeta_{j}=1\\
		\begin{array}{c}
		\frac{1}{(\beta_{j}-\gamma_{j})}\left[\frac{\left(\beta_{j}-\left(\gamma_{j}+\beta_{j}\right)e^{\gamma_{j}T}\right)e^{\beta_{j}(T+t)}+\left(\left(\gamma_{j}+\beta_{j}\right)e^{\beta_{j}T}-\gamma_{j}\right)e^{\gamma_{j}(T+t)}}{1-e^{\gamma_{j}T}-e^{\beta_{j}T}+e^{\left(\gamma_{j}+\beta_{j}\right)T}}+\right.\\
		\left.\frac{\left(e^{\beta_{j}(T+t)}\left(1-e^{\gamma_{j}T}\right)-e^{\gamma_{j}(T+t)}\left(1-e^{\beta_{j}T}\right)\right)\left(\gamma_{j}e^{\gamma_{j}T}+\beta_{j}e^{\beta_{j}T}-\left(\gamma_{j}+\beta_{j}\right)e^{\left(\gamma_{j}+\beta_{j}\right)T}\right)}{\left(1-e^{\gamma_{j}T}-e^{\beta_{j}T}+e^{\left(\gamma_{j}+\beta_{j}\right)T}\right)^{2}}\right]
		\end{array}\,, & \zeta_{j}>1
		\end{cases},\quad j=1,\dots,n\,.\]
	}
	
	\section{Proof of Remark \ref{rmk:Gamma_T}\label{app:Gamma_T_estimate}}
	
	We derive an estimate for the sup norm of the integral of the operator
	norm of the Green's function, i.e., for $\int_{0}^{T}\left\Vert G_{j}(t-s,T)\right\Vert \,ds$ defined in equation~\eqref{eq:G_firstorder}. For $t>s$, we start
	by noting that
	
	\begin{align*}
	\left|G_{j}(t-s,T)\right|  =\left|e^{\lambda_{j}(t-s)}\left(\frac{1}{1-e^{\lambda_{j}T}}\right)\right|\leq\left|e^{\lambda_{j}(t-s)}\right|\frac{1}{\left|1-e^{\lambda_{j}T}\right|}\leq\frac{\max(\left|e^{\lambda_{j}t}\right|,1)}{\left|1-e^{\lambda_{j}T}\right|},\quad0\leq s\leq t<T.
	\end{align*}
	For the case $T>s>t$, we obtain

	\begin{align*}
	\left|G_{j}(t-s,T)\right|  =\left|e^{\lambda_{j}(t-s)}\left(\frac{e^{\lambda_{j}T}}{1-e^{\lambda_{j}T}}\right)\right|&\leq\max\left(\left|\left(\frac{e^{\lambda_{j}T}}{1-e^{\lambda_{j}T}}\right)\right|,\left|e^{\lambda_{j}(t-T)}\left(\frac{e^{\lambda_{j}T}}{1-e^{\lambda_{j}T}}\right)\right|\right)\\
	&\leq\frac{\max(\left|e^{\lambda_{j}t}\right|,1)}{\left|1-e^{\lambda_{j}T}\right|},\quad 0\leq s\leq t<T.
	\end{align*}
	The upper bounds on the Green's function in the two intervals are
	equal and we therefore obtain
	\begin{align*}
	\left\Vert \int_{0}^{T}\left\Vert \mathbf{G}(t-s,T)\right\Vert \,ds\right\Vert _{0} & =\max_{t\in[0,T]}\int_{0}^{T}\left\Vert \mathbf{G}(t-s,T)\right\Vert \,ds\\
	& \leq\max_{t\in[0,T]}\int_{0}^{T}\max_{1\leq j\leq n}\frac{\max(\left|e^{\lambda_{j}t}\right|,1)}{\left|1-e^{\lambda_{j}T}\right|}\,ds\\
	& \leq\max_{1\leq j\leq n}\frac{T\max(\left|e^{\lambda_{j}T}\right|,1)}{\left|1-e^{\lambda_{j}T}\right|}\eqqcolon\Gamma(T).
	\end{align*}

	\section{Proof of Theorem \ref{Thm:Picard_it_periodic} \label{App:Proof_Picard_it_per}}
	In the following, we derive conditions under which the mapping~$ \mathcal{\boldsymbol{\mathcal{H}}} $ defined in equation \eqref{eq:IE_periodic} is a contraction mapping. We rewrite \eqref{eq:IE_periodic} as 
	\[
	\mathbf{z}(t)=\Upsilon_{P}(\mathbf{F}(t)-\mathbf{R}(\mathbf{z}(t))):=\int_{0}^{T}\mathbf{V}\mathbf{G}(t-s,T)\mathbf{V}^{-1}\left[\mathbf{F}(s)-\mathbf{R}(\mathbf{z}(s))\right]\,ds,
	\]
	where $\Upsilon_{P}$ is a linear map representing the convolution
	operation with the Green's function. \textcolor{black}{Specifically,
		we define the space of $n$-dimensional periodic $T$-periodic functions
		as
		\begin{equation}
		\mathcal{P}_{n}:=\{\mathbf{p}:\mathbb{R}\to\mathbb{R}^{n},\mathbf{p}\in C^{0},\mathbf{p}(t)=\mathbf{p}(t+T)\forall t\in\mathbb{R}\}.\label{eq:P_k}
		\end{equation}
		Under the non-resonance condition \eqref{eq:nonresonance-1}, the
		linear map \[\Upsilon_{P}:\mathcal{P}_{2n}\to\mathcal{P}_{2n},\,\Upsilon_{P}\mathbf{p}=\mathbf{V}\int_{0}^{T}\mathbf{G}(t-s,T)\mathbf{V}^{-1}\mathbf{p}(s)\,ds\]
		is well-defined, i.e., $\Upsilon_{P}$ maps $T$-periodic functions
		into $T$-periodic functions. Indeed, for any $\mathbf{p}\in\mathcal{P}_{2n}$,
		let $\mathbf{q}=\Upsilon_{P}\mathbf{p}$. We have
		\begin{align*}
		\mathbf{q}(t) & =\mathbf{V}\int_{0}^{T}\mathbf{G}(t-s,T)\mathbf{V}^{-1}\mathbf{p}(s)\,ds=\mathbf{V}\int_{0}^{T}\mathbf{G}(t+T-(s+T),T)\mathbf{V}^{-1}\mathbf{p}(s)\,ds\\
		& =\mathbf{V}\int_{0}^{T}\mathbf{G}(t+T-(s+T),T)\mathbf{V}^{-1}\mathbf{p}(s+T)\,ds=\mathbf{V}\int_{T}^{2T}\mathbf{G}(t+T-\sigma,T)\mathbf{V}^{-1}\mathbf{p}(\sigma)\,d\sigma\\
		& =\mathbf{q}(t+T)\,,
		\end{align*}
		i.e., $\mathbf{q}\in\mathcal{P}_{2n}$.}
	
	Since the space space \eqref{eq:Space_per_fcn}
	consists of periodic functions, we know that it is well-defined in
	the space $C_{\delta}^{\mathbf{z}_{0}}[0,T]$. Therefore, by the Banach
	fixed point theorem, the integral equation \eqref{eq:IE_periodic}
	has a unique solution if the mapping $\mathcal{\boldsymbol{\mathcal{H}}}$
	is a contraction of the complete metric space $C_{\delta}^{\mathbf{z}_{0}}[0,T]$
	into itself for an appropriate choice of the radius $\delta>0$ and
	the initial guess $\mathbf{z}_{0}$. 
	
	To find a condition under which this holds, we first note that for
	$\left\Vert \mathbf{z-}\mathbf{z}_{0}\right\Vert _{0}\leq\delta,$
	eq. \eqref{eq:IE_periodic} gives
	{
		\small
		\begin{align*}
		\left|\mathcal{\boldsymbol{\boldsymbol{\mathcal{H}}}}(\mathbf{z}(t))\right| & =\left|\int_{0}^{T}\mathbf{V}\mathbf{G}(t-s,T)\mathbf{V}^{-1}\left[\mathbf{F}(s)-\mathbf{R}(\mathbf{z}_{0}(s))+\mathbf{R}(\mathbf{z}_{0}(s))-\mathbf{R}(\mathbf{z}(s))\right]\,ds\right|\\
		& =\left\Vert \int_{0}^{T}\mathbf{V}\mathbf{G}(t-s,T)\mathbf{V}^{-1}\left[\mathbf{F}(s)-\mathbf{R}(\mathbf{z}_{0}(s))\right]\,ds\right\Vert _{0}+\left\Vert \int_{0}^{T}\mathbf{V}\mathbf{G}(t-s,T)\mathbf{V}^{-1}\left[\mathbf{R}(\mathbf{z}_{0}(s))-\mathbf{R}(\mathbf{z}(s))\right]\,ds\right\Vert _{0}\\
		& \leq\left\Vert \boldsymbol{\mathcal{E}}(\mathbf{z}_{0},t)\right\Vert _{0}+\left\Vert \mathbf{V}\right\Vert \left\Vert \mathbf{V}^{-1}\right\Vert L_{\delta}^{\mathbf{z}_{0}}\left\Vert \mathbf{z}-\mathbf{z}_{0}\right\Vert _{0}\int_{0}^{T}\left\Vert \mathbf{G}(t-s,T)\right\Vert \,ds\\
		& \leq\left\Vert \boldsymbol{\mathcal{E}}(\mathbf{z}_{0},t)\right\Vert _{0}+\left\Vert \mathbf{V}\right\Vert \left\Vert \mathbf{V}^{-1}\right\Vert L_{\delta}^{\mathbf{z}_{0}}\delta\left\Vert \int_{0}^{T}\left\Vert \mathbf{H}(t-s,T)\right\Vert \,ds\right\Vert _{0}\\
		& \leq\left\Vert \boldsymbol{\mathcal{E}}(\mathbf{z}_{0},t)\right\Vert _{0}+\delta\left\Vert \mathbf{V}\right\Vert \left\Vert \mathbf{V}^{-1}\right\Vert ^{2}L_{\delta}^{\mathbf{z}_{0}}\Gamma(T),
		\end{align*}}
	where $L_{\delta}^{\mathbf{z}_{0}}$ denotes a uniform-in-time Lipschitz
	constant for the function $\mathbf{S}(\mathbf{z})$ with respect
	to its argument $\mathbf{z}$ within the ball $\left|\mathbf{z}-\mathbf{z}_{0}\right|\leq\delta,$
	and $\Gamma(T)$ is the constant defined in \eqref{eq:GammaT}. The
	initial error term $\boldsymbol{\mathcal{E}}(t)$ is defined in eq.
	\eqref{eq:initial_error}. Taking the sup norm of both sides, we obtain
	that $\left\Vert \mathcal{\boldsymbol{\boldsymbol{\mathcal{H}}}}(\mathbf{z})\right\Vert _{0}\leq\delta,$
	and hence 
	\[
	\boldsymbol{\mathcal{H}}\colon C_{\delta}^{\mathbf{z}_{0}}[0,T]\to C_{\delta}^{\mathbf{z}_{0}}[0,T]
	\]
	holds, whenever condition \eqref{eq:delta_cond} holds. 
	
	Similarly, for two functions $\mathbf{z},\tilde{\mathbf{z}}\in C_{\delta}^{\mathbf{z}_{0}}[0,T],$
	eq. \eqref{eq:IE_periodic} gives the estimate
	\begin{align*}
	\left|\mathcal{\boldsymbol{\mathcal{H}}}(\mathbf{z}(t))-\boldsymbol{\mathcal{H}}(\tilde{\mathbf{z}}(t))\right| & \leq\left|\int_{0}^{T}\mathbf{V}\mathbf{G}(t-s,T)\mathbf{V}^{-1}\left[\mathbf{R}(\mathbf{z}(s))-\mathbf{R}(\tilde{\mathbf{z}}(s))\right]\,ds\right|\\
	& \leq2\left\Vert \mathbf{V}\right\Vert \left\Vert \mathbf{V}^{-1}\right\Vert L_{\delta}^{\mathbf{z}_{0}}\int_{0}^{T}\left\Vert \mathbf{G}(t-s,T)\right\Vert \,ds\left|\mathbf{z}(t)-\tilde{\mathbf{z}}(t)\right|\\
	& \leq2\left\Vert \mathbf{V}\right\Vert \left\Vert \mathbf{V}^{-1}\right\Vert L_{\delta}^{\mathbf{z}_{0}}\left\Vert \int_{0}^{T}\left\Vert \mathbf{G}(t-s,T)\right\Vert \,ds\right\Vert _{0}\left\Vert \mathbf{z}-\tilde{\mathbf{z}}\right\Vert _{0}\\
	& \leq2\left\Vert \mathbf{V}\right\Vert \left\Vert \mathbf{V}^{-1}\right\Vert L_{\delta}^{\mathbf{z}_{0}}\Gamma(T)\left\Vert \mathbf{z}-\tilde{\mathbf{z}}\right\Vert _{0}.
	\end{align*}
	Taking the sum norm of both sides then gives that $\boldsymbol{\mathcal{H}}$
	is a contraction mapping on $C_{\delta}^{\mathbf{z}_{0}}[0,T]$ if
	\begin{equation}
	2\left\Vert \mathbf{V}\right\Vert \left\Vert \mathbf{V}^{-1}\right\Vert L_{\delta}^{\mathbf{z}_{0}}\Gamma(T)<1/a,\label{eq:cond2}
	\end{equation}
	holds for some real number $a\geq1$. Solving equation \eqref{eq:cond2}
	for $L_{\delta}^{\mathbf{z}_{0}}$, we obtain condition \eqref{eq:Lipschitz_cond}.
	
	\section{Proof of Theorem \ref{Thm:Picard_it_qper}\label{App:Proof_Picard_it_qper}}
	We show here that the mapping $\boldsymbol{\mathcal{H}}$ defined
	in the quasi-periodic case (cf. equation \eqref{eq:Map_quasiper})
	is a contraction on the space \eqref{eq:Space_qper_funct} if the
	conditions \eqref{eq:Lipschitz_cond_qper} and \eqref{eq:delta_cond_qper}
	hold. The convergence estimate for the iteration \eqref{eq:Picard_it_qper}
	is then similar in spirit to the periodic case (cf. Appendix~\ref{App:Proof_Picard_it_per}). 
	
	We rewrite \eqref{eq:IE_QP} as 
	\[
	\mathbf{z}(t)=\Upsilon_{Q}(\mathbf{F}(t)-\mathbf{R}(\mathbf{z}(t))):=\mathbf{V}\sum_{\boldsymbol{\kappa}\in\mathbb{Z}^{k}}\mathbf{H}(T_{\kappa})\mathbf{V}^{-1}\left(\mathbf{F}_{\boldsymbol{\kappa}}-\mathbf{R}_{\boldsymbol{\kappa}}\{\mathbf{z}\}\right)e^{i\left\langle \boldsymbol{\kappa},\boldsymbol{\Omega}\right\rangle t},
	\]
	where $\Upsilon_{Q}$ is a linear map representing the convolution
	operation with the Green's function.\textcolor{black}{{} Similarly to
		the periodic case, we define the space of $n$-dimensional quasi-periodic
		functions with frequency base vector $\boldsymbol{\Omega}$ as
		\begin{equation}
		\mathcal{Q}_{2n}:=\{\mathbf{p}:\mathbb{T}^{k}\to\mathbb{R}^{n},\mathbf{p}\in C^{0}\}.\label{eq:quasip_functionspace}
		\end{equation}
		Furthermore, we note that under the non-resonance condition \eqref{eq:nonresonance-1},
		the linear map 
		\[\Upsilon_{Q}:\mathcal{Q}_{2n}\to\mathcal{Q}_{2n},\quad\Upsilon_{Q}\mathbf{q}=\mathbf{V}\sum_{\boldsymbol{\kappa}\in\mathbb{Z}^{k}}\int_{0}^{T_{\boldsymbol{\kappa}}}\mathbf{G}(t-s,T_{\boldsymbol{\kappa}})\mathbf{V}^{-1}\mathbf{q}(s)\,ds\]
		is well-defined, i.e., $\Upsilon_{Q}$ maps any quasi periodic function
		$\mathbf{q}$ with frequency base vector $\boldsymbol{\Omega}$ to
		quasi-periodic functions with the same frequency base vector $\boldsymbol{\Omega}$.
		This is a direct consequence of the linearity of the $\Upsilon_{Q}$
		and definition of $\Upsilon_{P}$ in Appendix \ref{App:Proof_Picard_it_per}.}

	Since the mapping \eqref{eq:Map_quasiper} is well-defined in the space
	$C_{\delta}^{\mathbf{z}_{0}}(\boldsymbol{\Omega})$ defined in \eqref{eq:Space_qper_funct}, we have by the
	Banach fixed point theorem that the integral equation \eqref{eq:Map_quasiper}
	has a unique solution if the mapping $\boldsymbol{\mathcal{H}}$ is
	a contraction of the complete metric space $C_{\delta}^{\mathbf{z}_{0}}(\boldsymbol{\Omega})$
	into itself for an appropriate choice of the radius $\delta>0.$ In
	a similar spirit as in the periodic case we search for conditions
	under which the space $C_{\delta}^{\mathbf{z}_{0}}(\boldsymbol{\Omega})$
	is mapped to itself. Therefore, we take the sup norm of the mapping
	\eqref{eq:Map_quasiper} applied to an element from $C_{\delta}^{\mathbf{z}_{0}}(\boldsymbol{\Omega})$
	and obtain
	{\small
		\begin{align*}
		\left|\mathcal{\boldsymbol{\mathcal{H}}}(\mathbf{z}(t))\right| & =\left|\mathbf{V}\sum_{\kappa\in\mathbb{Z}^{k}}\mathbf{H}(T_{\kappa})\mathbf{V}^{-1}\left[\mathbf{F_{\boldsymbol{\kappa}}}-\mathbf{R}_{\boldsymbol{\kappa}}\{\mathbf{z}_{0}\}+\mathbf{R}_{\boldsymbol{\kappa}}\{\mathbf{z}_{0}\}-\mathbf{R}_{\boldsymbol{\kappa}}\{\mathbf{z}\}\right]e^{i\left\langle \boldsymbol{\kappa},\boldsymbol{\Omega}\right\rangle t}\right|\\
		& \leq\left\Vert \mathbf{V}\sum_{\kappa\in\mathbb{Z}^{k}}\mathbf{H}(T_{\kappa})\mathbf{V}^{-1}\left[\mathbf{F_{\boldsymbol{\kappa}}}-\mathbf{R}_{\boldsymbol{\kappa}}\{\mathbf{z}_{0}\}\right]e^{i\left\langle \boldsymbol{\kappa},\boldsymbol{\Omega}\right\rangle t}\right\Vert _{0}+\left\Vert \mathbf{V}\sum_{\boldsymbol{\kappa}\in\mathbb{Z}^{k}}\mathbf{H}(T_{\kappa})\mathbf{V^{-1}}\left[\mathbf{R}_{\boldsymbol{\kappa}}\{\mathbf{z}_{0}\}-\mathbf{R}_{\boldsymbol{\kappa}}\{\mathbf{z}\}\right]e^{i\left\langle \boldsymbol{\kappa},\boldsymbol{\Omega}\right\rangle t}\right\Vert _{0}\\
		& \leq\left\Vert \boldsymbol{\mathcal{E}}(\mathbf{z}_{0},t)\right\Vert _{0}+\left\Vert \mathbf{V}\right\Vert \left\Vert \mathbf{V}^{-1}\right\Vert h_{max}\left\Vert \sum_{\boldsymbol{\kappa}\in\mathbb{Z}^{k}}\left[\mathbf{R}_{\boldsymbol{\kappa}}\{\mathbf{z}_{0}\}-\mathbf{R}_{\boldsymbol{\kappa}}\{\mathbf{z}\}\right]e^{i\left\langle \boldsymbol{\kappa},\boldsymbol{\Omega}\right\rangle t}\right\Vert _{0}\\
		& \leq\left\Vert \boldsymbol{\mathcal{E}}(\mathbf{z}_{0},t)\right\Vert _{0}+\left\Vert \mathbf{V}\right\Vert \left\Vert \mathbf{V}^{-1}\right\Vert h_{max}\left\Vert \mathbf{R}(\mathbf{z}_{0}(s),s)-\mathbf{R}(\mathbf{z}(s),s)\right\Vert _{0}\\
		& \leq\left\Vert \boldsymbol{\mathcal{E}}(\mathbf{z}_{0},t)\right\Vert _{0}+\delta\left\Vert \mathbf{V}\right\Vert \left\Vert \mathbf{V}^{-1}\right\Vert h_{max}L_{\delta}^{\mathbf{z}_{0}},
		\end{align*}}
	where we have used that the Fourier series of the nonlinearity $\sum_{k}\mathbf{R_{\boldsymbol{\kappa}}}\{\mathbf{z}\}e^{i\left\langle \boldsymbol{\kappa},\boldsymbol{\Omega}\right\rangle t}$converges
	to the function $\mathbf{R}(\mathbf{z},t)$. Due to the Lipschitz
	continuity of the nonlinearity and the forcing, this holds. We finally
	conclude, that $C_{\delta}^{\mathbf{z}_{0}}(\boldsymbol{\Omega})$
	is mapped to itself, if condition \eqref{eq:delta_cond_qper} holds. 
	
	Similarly, for two function $\mathbf{z},\tilde{\mathbf{z}}$ in $C_{\delta}^{\mathbf{z}_{0}}(\boldsymbol{\Omega})$,
	we obtain 
	\begin{align*}
	\left|\boldsymbol{\mathcal{H}}(\mathbf{z}(t))-\boldsymbol{\mathcal{H}}(\tilde{\mathbf{z}}(t))\right| & \leq\left|\sum_{\boldsymbol{\kappa}\in\mathbb{Z}^{k}}\mathbf{V}\mathbf{H}(T_{\kappa})\mathbf{V}^{-1}\left[\mathbf{\mathbf{R}_{\boldsymbol{\kappa}}\{\mathbf{z}\}}-\mathbf{\mathbf{R}_{\boldsymbol{\kappa}}\{\tilde{\mathbf{z}}\}}\right]e^{i\left\langle \boldsymbol{\kappa},\boldsymbol{\Omega}\right\rangle t}\right|\\
	& \leq2\left\Vert \mathbf{V}\right\Vert \left\Vert \mathbf{V}^{-1}\right\Vert h_{max}L_{\delta}^{\mathbf{z}_{0}}\left\Vert \mathbf{z}-\tilde{\mathbf{z}}\right\Vert _{0}\,.
	\end{align*}
	Therefore the iteration \eqref{eq:Map_quasiper} is a contraction
	on the space $C_{\delta}^{\mathbf{z}_{0}}(\boldsymbol{\Omega})$,
	if the condition 
	\begin{equation}
	2\left\Vert \mathbf{V}\right\Vert \left\Vert \mathbf{V}^{-1}\right\Vert h_{max}L_{\delta}^{\mathbf{z}_{0}}<\frac{1}{a}\qquad a\in\mathbb{R},\:a>1,\label{eq:conv_cond_quasiper}
	\end{equation}
	holds, which we reformulate in \eqref{eq:Lipschitz_cond_qper}. 
	
	\section{Explicit expressions for Fourier coefficients in Remark \ref{rem:Fourier_qper_posdep}}
	\label{sec:Proof4}
	To obtain the amplifications factors given in \eqref{eq:ampl_fkt_posdep}, we carry out the integration
	explicitly, we diagonalize the system with the matrix of the undamped
	modeshapes $\mathbf{U}$, (i.e., let $\mathbf{x}=\mathbf{U}\mathbf{y}$) and introduce the notation $\boldsymbol{\psi_{\boldsymbol{\kappa}}}=\mathbf{U}^{\top}\mathbf{f}_{\boldsymbol{\kappa}}$. Assuming an underdamped configuration ($\zeta_{j}<1$), we obtain for the $j^{\text{th}}$ degree of freedom 
	
	{\small
		\begin{align*}
		w_{j}(t) & =\sum_{\boldsymbol{\kappa}\in\mathbb{Z}^{k}}\int_{0}^{T_{\boldsymbol{\kappa}}}L_{j}(t-s,T_{\boldsymbol{\kappa}})\psi_{j,\boldsymbol{\kappa}}e^{i\left\langle \boldsymbol{\kappa},\boldsymbol{\Omega}\right\rangle s}\,ds\\
		& =\sum_{\boldsymbol{\kappa}\in\mathbb{Z}^{k}}\int_{0}^{T_{\boldsymbol{\kappa}}}\frac{e^{\alpha_{j}(t-s)}}{\omega_{j}}\left[\frac{e^{\alpha_{j}T_{\boldsymbol{\kappa}}}\left[\sin\omega_{j}(T_{\boldsymbol{\kappa}}+t-s)-e^{\alpha_{j}T_{\boldsymbol{\kappa}}}\sin\omega_{j}(t-s)\right]}{1+e^{2\alpha_{j}T_{\boldsymbol{\kappa}}}-2e^{\alpha_{j}T_{\boldsymbol{\kappa}}}\cos\omega_{j}T_{\boldsymbol{\kappa}}}+h(t-s)\sin\omega_{j}(t-s)\right]\psi_{j,\boldsymbol{\kappa}}e^{i\left\langle \boldsymbol{\kappa},\boldsymbol{\Omega}\right\rangle s}\,ds\\
		& =\sum_{\boldsymbol{\kappa}\in\mathbb{Z}^{k}}\int_{0}^{t}\frac{e^{\alpha_{j}(t-s)}}{\omega_{j}}\sin\omega_{j}(t-s)\psi_{j,\boldsymbol{\kappa}}e^{i\left\langle \boldsymbol{\kappa},\boldsymbol{\Omega}\right\rangle s}\,ds+
		\\
		&\quad\sum_{\boldsymbol{\kappa}\in\mathbb{Z}^{k}}\int_{0}^{T_{\boldsymbol{\kappa}}}\frac{e^{\alpha_{j}(t-s)}}{\omega_{j}}\left[\frac{e^{\alpha_{j}T_{\boldsymbol{\kappa}}}\left[\sin\omega_{j}(T_{\boldsymbol{\kappa}}+t-s)-e^{\alpha_{j}T_{\boldsymbol{\kappa}}}\sin\omega_{j}(t-s)\right]}{1+e^{2\alpha_{j}T_{\boldsymbol{\kappa}}}-2e^{\alpha_{j}T_{\boldsymbol{\kappa}}}\cos\omega_{j}T_{\boldsymbol{\kappa}}}\right]\psi_{j,\boldsymbol{\kappa}}e^{i\left\langle \boldsymbol{\kappa},\boldsymbol{\Omega}\right\rangle s}\,ds
		\end{align*}
	}
	{\small
		\begin{align*} \qquad&=\sum_{\boldsymbol{\kappa}\in\mathbb{Z}^{k}}\frac{e^{\alpha_{j}t}}{\omega_{j}}\psi_{j,\boldsymbol{\kappa}}\int_{0}^{t}\sin\omega_{j}(t-s)e^{(i\left\langle \boldsymbol{\kappa},\boldsymbol{\Omega}\right\rangle -\alpha_{j})s}\,ds+\\
		& \quad\sum_{\boldsymbol{\kappa}\in\mathbb{Z}^{k}}\frac{e^{\alpha_{j}(T_{\boldsymbol{\kappa}}+t)}}{\omega_{j}}\left(\frac{1}{1+e^{2\alpha_{j}T_{\boldsymbol{\kappa}}}-2e^{\alpha_{j}T_{\boldsymbol{\kappa}}}\cos\omega_{j}T_{\boldsymbol{\kappa}}}\right)\psi_{j,\boldsymbol{\kappa}}\left[\int_{0}^{T_{\boldsymbol{\kappa}}}e^{(i\left\langle \boldsymbol{\kappa},\boldsymbol{\Omega}\right\rangle -\alpha_{j})s}\sin\omega_{j}(T_{\boldsymbol{\kappa}}+t-s)\,ds - \right.\\
		&\quad 
		\left.  e^{\alpha_{j}T_{\boldsymbol{\kappa}}}\int_{0}^{T_{\boldsymbol{\kappa}}}e^{(i\left\langle \boldsymbol{\kappa},\boldsymbol{\Omega}\right\rangle -\alpha_{j})s}\sin\omega_{j}(t-s)\,ds\right]\\
		&=\sum_{\boldsymbol{\kappa}\in\mathbb{Z}^{k}}\frac{e^{\alpha_{j}t}}{\omega_{j}}\psi_{j,\boldsymbol{\kappa}}\left.\frac{e^{(i\left\langle \boldsymbol{\kappa},\boldsymbol{\Omega}\right\rangle -\alpha_{j})s}\left(\omega_{j}\cos\omega_{j}(t-s)+(i\left\langle \boldsymbol{\kappa},\boldsymbol{\Omega}\right\rangle -\alpha_{j})\sin\omega_{j}(t-s)\right)}{((i\left\langle \boldsymbol{\kappa},\boldsymbol{\Omega}\right\rangle -\alpha_{j})^{2}+\omega_{j}^{2})}\right|_{s=0}^{s=t}+\\
		&\quad\sum_{\boldsymbol{\kappa}\in\mathbb{Z}^{k}}\frac{e^{\alpha_{j}(T_{\boldsymbol{\kappa}}+t)}}{\omega_{j}}\left[\frac{\left.e^{(i\left\langle \boldsymbol{\kappa},\boldsymbol{\Omega}\right\rangle -\alpha_{j})s}\left(\omega_{j}\cos\omega_{j}(T_{\boldsymbol{\kappa}}+t-s)+(i\left\langle \boldsymbol{\kappa},\boldsymbol{\Omega}\right\rangle -\alpha_{j})\sin\omega_{j}(T_{\boldsymbol{\kappa}}+t-s)\right)\right|_{s=0}^{s=T_{\boldsymbol{\kappa}}}}{\left(1+e^{2\alpha_{j}T_{\boldsymbol{\kappa}}}-2e^{\alpha_{j}T_{\boldsymbol{\kappa}}}\cos\omega_{j}T_{\boldsymbol{\kappa}}\right)\left((i\left\langle \boldsymbol{\kappa},\boldsymbol{\Omega}\right\rangle -\alpha_{j})^{2}+\omega_{j}^{2}\right)}\right]\psi_{j,\boldsymbol{\kappa}}-\\
		&\sum_{\boldsymbol{\kappa}\in\mathbb{Z}^{k}}\frac{e^{\alpha_{j}(2T_{\boldsymbol{\kappa}}+t)}}{\omega_{j}}\left[\frac{\left.e^{(i\left\langle \boldsymbol{\kappa},\boldsymbol{\Omega}\right\rangle -\alpha_{j})s}\left(\omega_{j}\cos\omega_{j}(t-s)+(i\left\langle \boldsymbol{\kappa},\boldsymbol{\Omega}\right\rangle -\alpha_{j})\sin\omega_{j}(t-s)\right)\right|_{s=0}^{s=T_{\boldsymbol{\kappa}}}}{\left(1+e^{2\alpha_{j}T_{\boldsymbol{\kappa}}}-2e^{\alpha_{j}T_{\boldsymbol{\kappa}}}\cos\omega_{j}T_{\boldsymbol{\kappa}}\right)\left((i\left\langle \boldsymbol{\kappa},\boldsymbol{\Omega}\right\rangle -\alpha_{j})^{2}+\omega_{j}^{2}\right)}\right]\psi_{j,\boldsymbol{\kappa}}\\
		&=\sum_{\boldsymbol{\kappa}\in\mathbb{Z}^{k}}\frac{1}{(i\left\langle \boldsymbol{\kappa},\boldsymbol{\Omega}\right\rangle -\alpha_{j})^{2}+\omega_{j}^{2}}\psi_{j,\boldsymbol{\kappa}}e^{i\left\langle \boldsymbol{\kappa},\boldsymbol{\Omega}\right\rangle t}.
		\end{align*}
	}
	For the critically damped configuration ($\zeta_{j}=1$), we obtain
	{\small
		\begin{align*}
		w_{j}(t)	&=\sum_{\boldsymbol{\kappa}\in\mathbb{Z}^{k}}\int_{0}^{T_{\boldsymbol{\kappa}}}L_{j}(t-s,T_{\boldsymbol{\kappa}})\psi_{j,\boldsymbol{\kappa}}e^{i\left\langle \boldsymbol{\kappa},\boldsymbol{\Omega}\right\rangle s}\,ds
		\\
		&=\sum_{\boldsymbol{\kappa}\in\mathbb{Z}^{k}}\int_{0}^{T_{\boldsymbol{\kappa}}}\left[\frac{e^{\alpha_{j}(T_{\boldsymbol{\kappa}}+t-s)}\left[\left(1-e^{\alpha_{j}T}\right)(t-s)+T_{\boldsymbol{\kappa}}\right]}{\left(1-e^{\alpha_{j}T_{\boldsymbol{\kappa}}}\right)^{2}}+h(t-s)(t-s)e^{\alpha_{j}(t-s)}\right]\psi_{j,\boldsymbol{\kappa}}e^{i\left\langle \boldsymbol{\kappa},\boldsymbol{\Omega}\right\rangle s}\,ds
		\\
		&=\sum_{\boldsymbol{\kappa}\in\mathbb{Z}^{k}}e^{\alpha_{j}t}\psi_{j,\boldsymbol{\kappa}}\int_{0}^{t}(t-s)e^{(i\left\langle \boldsymbol{\kappa},\boldsymbol{\Omega}\right\rangle -\alpha_{j})s}\,ds+
		\\
		&\quad\sum_{\boldsymbol{\kappa}\in\mathbb{Z}^{k}}\frac{e^{\alpha_{j}(T_{\boldsymbol{\kappa}}+t)}}{\left(1-e^{\alpha_{j}T_{\boldsymbol{\kappa}}}\right)^{2}}\psi_{j,\boldsymbol{\kappa}}\int_{0}^{T_{\boldsymbol{\kappa}}}\left[\left(1-e^{\alpha_{j}T_{\boldsymbol{\kappa}}}\right)(t-s)+T_{\boldsymbol{\kappa}}\right]e^{(i\left\langle \boldsymbol{\kappa},\boldsymbol{\Omega}\right\rangle -\alpha_{j})s}\,ds
		\\
		&=\sum_{\boldsymbol{\kappa}\in\mathbb{Z}^{k}}\frac{1}{(i\left\langle \boldsymbol{\kappa},\boldsymbol{\Omega}\right\rangle -\alpha_{j})^{2}}\psi_{j,\boldsymbol{\kappa}}e^{i\left\langle \boldsymbol{\kappa},\boldsymbol{\Omega}\right\rangle t}.
		\end{align*}
	}
	Finally, for the overdamped configuration ($\zeta_{j}>1$), we obtain
	{\small
		\begin{align*}
		w_{j}(t)	&=\sum_{\boldsymbol{\kappa}\in\mathbb{Z}^{k}}\int_{0}^{T_{\boldsymbol{\kappa}}}L_{j}(t-s,T_{\boldsymbol{\kappa}})\psi_{j,\boldsymbol{\kappa}}e^{i\left\langle \boldsymbol{\kappa},\boldsymbol{\Omega}\right\rangle s}\,ds\\
		&=\sum_{\boldsymbol{\kappa}\in\mathbb{Z}^{k}}\int_{0}^{T_{\boldsymbol{\kappa}}}\left[\frac{e^{\beta_{j}(T_{\boldsymbol{\kappa}}+t-s)}\left(1-e^{\gamma_{j}T_{\boldsymbol{\kappa}}}\right)-e^{\gamma_{j}(T_{\boldsymbol{\kappa}}+t-s)}\left(1-e^{\beta_{j}T_{\boldsymbol{\kappa}}}\right)}{(\beta_{j}-\gamma_{j})\left(1-e^{\gamma_{j}T_{\boldsymbol{\kappa}}}-e^{\beta_{j}T_{\boldsymbol{\kappa}}}+e^{\left(\gamma_{j}+\beta_{j}\right)T_{\boldsymbol{\kappa}}}\right)}+\frac{h(t-s)\left(e^{\beta_{j}(t-s)}-e^{\gamma_{j}(t-s)}\right)}{(\beta_{j}-\gamma_{j})}\right]\psi_{j,\boldsymbol{\kappa}}e^{i\left\langle \boldsymbol{\kappa},\boldsymbol{\Omega}\right\rangle s}\,ds\\
		&=\sum_{\boldsymbol{\kappa}\in\mathbb{Z}^{k}}\frac{\psi_{j,\boldsymbol{\kappa}}}{(\beta_{j}-\gamma_{j})}\left[e^{\beta_{j}t}\int_{0}^{t}e^{(i\left\langle \boldsymbol{\kappa},\boldsymbol{\Omega}\right\rangle -\beta_{j})s}\,ds-e^{\gamma_{j}t}\int_{0}^{t}e^{(i\left\langle \boldsymbol{\kappa},\boldsymbol{\Omega}\right\rangle -\gamma_{j})s}\,ds\right]\\
		&\quad+\sum_{\boldsymbol{\kappa}\in\mathbb{Z}^{k}}\frac{\psi_{j,\boldsymbol{\kappa}}\left[\left(1-e^{\gamma_{j}T}\right)e^{\beta_{j}(T+t)}\int_{0}^{T_{\boldsymbol{\kappa}}}e^{(i\left\langle \boldsymbol{\kappa},\boldsymbol{\Omega}\right\rangle -\beta_{j})s}\,ds-\left(1-e^{\beta_{j}T}\right)e^{\gamma_{j}(T+t)}\int_{0}^{T_{\boldsymbol{\kappa}}}e^{(i\left\langle \boldsymbol{\kappa},\boldsymbol{\Omega}\right\rangle -\gamma_{j})s}\,ds\right]}{(\beta_{j}-\gamma_{j})(1-e^{\gamma_{j}T_{\boldsymbol{\kappa}}}-e^{\beta_{j}T_{\boldsymbol{\kappa}}}+e^{\left(\gamma_{j}+\beta_{j}\right)T_{\boldsymbol{\kappa}}})}\\
		&=\sum_{\boldsymbol{\kappa}\in\mathbb{Z}^{k}}\frac{1}{(\beta_{j}-i\left\langle \boldsymbol{\kappa},\boldsymbol{\Omega}\right\rangle )(\gamma_{j}-i\left\langle \boldsymbol{\kappa},\boldsymbol{\Omega}\right\rangle )}\psi_{j,\boldsymbol{\kappa}}e^{i\left\langle \boldsymbol{\kappa},\boldsymbol{\Omega}\right\rangle t}.
		\end{align*}
		
	}

\end{document}